\documentclass[a4paper,centertags,oneside,12pt]{amsart}
\usepackage{mathrsfs}
\usepackage{appendix}
\usepackage{amssymb}
\usepackage{fancyhdr}
\usepackage{charter}
\usepackage{typearea}
\usepackage{pdfsync}
\usepackage[utf8]{inputenc}
\usepackage[colorlinks,linkcolor=blue,anchorcolor=blue,citecolor=green]{hyperref}
\usepackage{mathrsfs}
\usepackage[a4paper,top=3cm,bottom=3cm,left=3cm,right=3cm]{geometry}

\usepackage{enumitem}
\usepackage{tikz}
\usepackage{enumitem}
\usepackage{color}
\setlist[1]{itemsep=5pt}
\newcommand{\comment}[1]{}

\makeatletter
      \def\@setcopyright{}
      \def\serieslogo@{}
      \makeatother

\newtheorem{theorem}{Theorem}[section]
\newtheorem{lemma}[theorem]{Lemma}
\newtheorem{corollary}[theorem]{Corollary}

\newtheorem{proposition}[theorem]{Proposition}

\newtheorem{definition}[theorem]{Definition}

\numberwithin{equation}{section}

\begin{document}
\title{Hardy Spaces and Toeplitz operators on the Pentablock}

\author{Guicong Su}
\address{School of Mathematics and Statistics, Wuhan University of Technology, Wuhan, Hubei 430070, P.
R. China}
\thanks{* Corresponding author.}
\email{suguicong@whut.edu.cn}

\author{Nitin Tomar*}
\address{Statistics and Mathematics Unit, Indian Statistical Institute, Bangalore,
Karnataka – 560059, India.}
\email{tomarnitin414@gmail.com, nitin\_pd@isibang.ac.in}

\thanks{\textbf{Key words:}
Pentablock  \textperiodcentered \; Weighted Hardy spaces \textperiodcentered \; Toeplitz operators 
\textperiodcentered \; Brown-Halmos characterization \textperiodcentered \; The commutant lifting theorem \textperiodcentered \; Uniqueness set \textperiodcentered \; Pentablock contractions}
\thanks{\textbf{Mathematics Subject Classification (2020):}
32A35\textperiodcentered\; 47B35 \textperiodcentered\; 47A15 \textperiodcentered\; 32E20}

\begin{abstract} The pentablock is a domain in $\mathbb C^3$ arising in the study of the
$\mu$-synthesis problem and is defined by
\[
\mathcal P
=
\left\{
(a_{21},\operatorname{tr}(A),\det(A)):
A=[a_{ij}]_{i,j=1}^2\in M_2(\mathbb C),\ \|A\|<1
\right\}.
\]
We introduce the Hardy space $H^2(\mathcal P)$ associated with the
canonical measure $\mu$ on the distinguished boundary $b\mathcal P$.
We identify $H^2(\mathcal P)$ with a closed subspace of
$L^2(b\mathcal P,\mu)$ and show that it is a reproducing kernel Hilbert
space. We establish a new description of $\mathcal P$ and its closure
and obtain a uniqueness set. In particular,  we prove that $\mathcal P$ is the interior of the polynomial convex hull of $\Psi(\overline{\mathbb D}^3)$, where
\[
\Psi(\xi,\lambda_1,\lambda_2)
=
\left(
\frac{\xi(\lambda_1-\lambda_2)}{2},
\lambda_1+\lambda_2,
\lambda_1\lambda_2
\right)
\]
is the parametrization map for $\mathcal P$. We also show that
$H^2(\mathcal P)$ embeds isometrically as a closed subspace of
$H^2(\mathbb D^3)$. We then obtain a Brown-Halmos type
characterization of Toeplitz operators on $H^2(\mathcal P)$ and a
commutant lifting theorem for $\mathcal P$-isometries and their minimal
$\mathcal P$-unitary extensions. As consequences, we determine the
commutant of the coordinate multipliers, characterize the compact
Toeplitz operators and show that the coordinate multipliers have no
nontrivial joint reducing subspaces. We further introduce two weighted Hardy spaces on $\mathcal P$ associated with the complex Jacobian of $\Psi$. One is closely related to the Hardy space of the symmetrized bidisc. The other is associated with the Jacobian-squared density from the change-of-variables formula. Finally, we characterize $\mathcal P$-unitaries as images of commuting triples of unitaries under $\Psi$ and present a class of $\mathcal P$-contractions from triples having
$\overline{\mathbb D}^3$ as a spectral set.
\end{abstract}

\maketitle

\tableofcontents

\section{Introduction}

\noindent The class of Toeplitz operators on function spaces constitute one of the most foundational operator classes in modern function and operator theory. On the classical Hardy space $H^2(\mathbb{D})$ on the unit disc $\mathbb D=\{z \in \mathbb C: |z|<1\}$, a bounded operator $T$ is Toeplitz if and only if it satisfies the Brown-Halmos relation
\[
    M_z^* T M_z = T,
\]
where $M_z$ denotes multiplication by the coordinate function. This algebraic characterization, first established by Brown and Halmos \cite{BrownHalmos1964}, has lead to further study of analogous criteria for the Toeplitz operators on Hardy spaces over domains in several complex variables. Such multivariable generalizations have since been obtained for the polydisc \cite{Ding2003, DingSunZheng2012, MajiSarkarSarkar2018}, the Euclidean unit ball \cite{DavieJewell1977}, the Hartogs triangle \cite{JainPramanick2025}, the symmetrized polydisc \cite{BhattacharyyaDasSau2021, DasSau2021} and the tetrablock \cite{JainKumarMalPramanick2026}. The Toeplitz operators on bounded symmetric domains, Cartan domains and their proper images have been studied in \cite{Upmeier1983, KumarMalPramanick2025, GhoshShyamRoy2024}. These seminal works suggest that the geometry of the underlying domain, its distinguished boundary and the canonical coordinate multipliers jointly determine the appropriate notion of a Toeplitz operator and the corresponding Brown-Halmos type relations. The primary aim of this paper is to develop the corresponding theory for the pentablock $\mathcal P$, with the construction of its Hardy space as the starting point. Recall that 
\[
 \mathcal{P}
 =
 \left\{
   \bigl(a_{21},\operatorname{tr} A,\det A\bigr):
   A=[a_{ij}]_{i,j=1}^2,\ \|A\|<1
 \right\}
 \subseteq \mathbb{C}^3 .
\]
Introduced by Agler, Lykova and Young \cite{AglerLykovaYoung2015} in the context of $\mu$-synthesis problem, the pentablock $\mathcal{P}$ is a non-convex, polynomially convex and $(1, 1, 2)$-quasi-balanced domain. The domain $\mathcal P$ has attracted considerable attention in recent years, owing to its rich geometric and operator-theoretic structure, e.g., see \cite{Als, Suguicong, SuTuWang2015, Kosinski2015, Jin, JindalKumar2024, PalTomar2026}. It is closely related to two earlier domains arising in the same context, namely, the symmetrized bidisc \cite{AglerYoung1999} and the tetrablock \cite{AbouhajarWhiteYoung2007} given by
\begin{align*}
\mathbb G_2&=\{(tr(A), \det(A)) : A=[a_{ij}]_{i, j=1}^2 \in M_2(\mathbb C), \|A\|<1\} \quad \text{and}\\
\mathbb E&=\{(a_{11}, a_{22}, \det(A)) : A=[a_{ij}]_{i, j=1}^2 \in M_2(\mathbb C), A=A^t, \|A\|<1\},
\end{align*}
respectively. 
The construction of Hardy spaces on domains arising as proper holomorphic
images has been investigated previously for the symmetrized bidisc \cite{MisraShyamRoyZhang2013} and the
tetrablock \cite{BeraChavanJain2025}. The construction of a Hardy space on $\mathcal P$ requires a different approach from those for $\mathbb G_2$ and $\mathbb E$, which we describe in Section \ref{sec_02}. Also, the Hardy space construction for bounded symmetric domains is not applicable to $\mathcal P$, as the pentablock is not a bounded symmetric domain. A key feature of $\mathcal{P}$ is that its distinguished boundary $b \mathcal P$ (see \cite{AglerLykovaYoung2015, Jin}) can be described as the image of $3$-torus $\mathbb T^3$ under the map $\Psi$ given by
\[
 \Psi(\xi,\lambda_1,\lambda_2)
 =
 \left(
   \frac{\xi(\lambda_1-\lambda_2)}{2},
   \lambda_1+\lambda_2,
   \lambda_1\lambda_2
 \right).
 \]
The paramaterization map $\Psi$ is central to our construction of the Hardy space $H^2(\mathcal P)$ on the pentablock. In fact, we prove in Section \ref{sec_03} that 
\[
\widehat{\Psi(\overline{\mathbb{D}}^3)}=\overline{\mathcal{P}} \quad \text{and} \quad \text{int}\left(\widehat{\Psi(\overline{\mathbb{D}}^3)}\right)=\mathcal{P},
\]
where $\widehat{K}$ and $\text{int}(K)$ denote the polynomial convex hull and interior of a set $K$, respectively. Furthermore, it is proved in Section \ref{sec_03} that $\Psi(\mathbb D^3)$ is a uniqueness set for $\mathcal P$ in the sense that if $f, g \in \text{Hol}(\mathcal P)$ with $f|_{\Psi(\mathbb D^3)}=g|_{\Psi(\mathbb D^3)}$, then $f=g$ on $\mathcal{P}$. These properties make $\Psi$ a natural parametrization for transferring the boundary measure from $\mathbb T^3$ to $b\mathcal P$. We define in Section \ref{sec_02} the canonical probability measure $\mu$ on $b\mathcal P$ as the pushforward of normalized Haar measure on $\mathbb T^3$ under $\Psi$ and set
\[
H^2(\mathcal P)
=
\overline{\mathbb C[a,s,p]}^{\,L^2(b\mathcal P,\mu)}.
\]
The resulting space admits two natural orthogonal decompositions, one induced by $(1, 1, 2)$-quasi-circular action $(a,s,p)\mapsto(\lambda a,\lambda s,\lambda^2p)$ for $\lambda\in \overline{\mathbb D}$ and the other arising from the $a$-fibres. Capitalizing on these orthogonal decompositions, we first establish that the boundary construction gives a Hardy space of holomorphic functions on $\mathcal{P}$. We prove that every $f\in H^2(\mathcal{P})$ admits a holomorphic realization that converges locally uniformly on $\mathcal{P}$, and that the point evaluations are bounded. Consequently, $H^2(\mathcal{P})$ is a reproducing kernel Hilbert space. We describe the $H^2(\mathcal{P})$-norm using the $(1,1,2)$-dilation maps
$\delta_r(a,s,p) = (ra, rs, r^2p)$ for $0<r<1$ and prove the isometric isomorphism
\[
H^2(\mathcal{P}) = \left\{ f\in\operatorname{Hol}(\mathcal{P}) : \sup_{0<r<1} \int_{b\mathcal{P}} |f\circ\delta_r|^2\,\mathrm{d}\mu < \infty \right\}.
\]
It follows that every $f\in H^2(\mathcal{P})$ has a unique radial boundary value in $L^2(b\mathcal{P},\mu)$. In Section \ref{sec_03}, we identify $H^2(\mathcal{P})$ with a closed subspace of the classical Hardy space $H^2(\mathbb{D}^3)$ via the map $f \mapsto f \circ \Psi$. We further determine the multiplier algebra of $H^2(\mathcal{P})$, proving the isometric equality
$
\operatorname{Mult}\bigl(H^2(\mathcal{P})\bigr) = H^\infty(\mathcal{P})$ with  $\|M_\varphi\| = \|\varphi\|_{\infty,\mathcal{P}}$ for all $\varphi\in H^\infty(\mathcal{P})$. Finally, we show that every bounded holomorphic function on $\mathcal{P}$ has an essentially bounded radial boundary value, providing an isometric algebra embedding of $H^\infty(\mathcal{P})$ into $L^\infty(b\mathcal{P},\mu)$.

\smallskip 

In Section \ref{sec_04}, we develop Toeplitz operator theory on $H^2(\mathcal{P})$. For $\varphi\in L^\infty(b\mathcal{P},\mu)$, we define the Toeplitz operator
\[
    T_\varphi = P_{H^2(\mathcal{P})} M_\varphi\big|_{H^2(\mathcal{P})}.
\]
The coordinate multipliers $(T_a,T_s,T_p)$ form a canonical $\mathcal{P}$-isometry on $H^2(\mathcal{P})$ and the corresponding multiplication triple $(M_a,M_s,M_p)$ on $L^2(b\mathcal{P},\mu)$ is its minimal normal $\mathcal{P}$-unitary extension. The operator identities satisfied by $\mathcal P$-isometries (see \cite{JindalKumar2024, PalTomar2026}) suggest considering the identities for $(T_a,T_s,T_p)$ given by
\[
T_p^*TT_p=T,\qquad
T_s^*TT_p=TT_s,\qquad
T_a^*TT_a+\frac14T_s^*TT_s=T,
\]
where $T\in\mathcal B(H^2(\mathcal P))$. We prove that these relations fully characterize Toeplitz operators on $H^2(\mathcal{P})$, so they serve as the Brown-Halmos relations for $\mathcal P$. A key step in the proof is a commutant-lifting argument based on the spherical normalization of the minimal $\mathcal{P}$-unitary extension and the fixed-point principle of Davie and Jewell \cite{DavieJewell1977}. More precisely, a bounded operator $T$ on $H^2(\mathcal P)$ satisfying the Brown-Halmos relations has a norm-preserving lift to an operator on $L^2(b \mathcal P, \mu)$ that commutes with the coordinate multiplication operators. This lift  must be a multiplication operator, which gives the required Toeplitz representation of $T$. As corollaries, it follows that every bounded operator commuting with $T_a,T_s,T_p$ is Toeplitz, and that the zero operator is the only compact Toeplitz operator on $H^2(\mathcal{P})$. Moreover, we prove that the coordinate multipliers $T_a, T_s, T_p$ admit no nontrivial joint reducing subspaces. Finally, we formulate this lifting argument for an arbitrary $\mathcal{P}$-isometry and its minimal $\mathcal{P}$-unitary extension, giving a commutant lifting theorem. In Section \ref{sec_05}, we consider two weighted Hardy spaces on
$\mathcal P$ arising from the complex Jacobian of the parametrization
$\Psi$. The weights $|J_\Psi|$ and $|J_\Psi|^2$ lead to the strict chain
of continuous inclusions
\[
H^2(\mathcal P)\subsetneq H_J^2(\mathcal P)
\subsetneq H_{J^2}^2(\mathcal P).
\]
The first weighted space $H_J^2(\mathcal P)$ is naturally related to the
Hardy space $H^2(\mathbb G_2)$ of the symmetrized bidisc through the
embedding $g\mapsto g\circ\pi_{\mathrm{symm}}$, where
$\pi_{\mathrm{symm}}(z_1,z_2)=(z_1+z_2,z_1z_2)$ is the symmetrization map.
In particular, $H^2(\mathbb G_2)$ embeds isometrically into
$H_J^2(\mathcal P)$. On the other hand, for a holomorphic map $F$ between domains of the same complex dimension, the real Jacobian is the squared modulus of the complex Jacobian, i.e.,
$\det D_{\mathbb R}F=|\det F'|^2=|J_F|^2$ so that $|J_F|^2$ is the volume factor in the change-of-variables formula. This provides the geometric motivation for the second weighted
space $H_{J^2}^2(\mathcal P)$ associated with $|J_\Psi|^2$. 

\smallskip 

The parametrization of the pentablock by the map $\Psi$ also provides a natural framework for studying operator tuples associated with $\mathcal P$. In Section \ref{sec_06}, we develop this connection at the operator level. We first show that the boundary representation $b\mathcal P=\Psi(\mathbb T^3)$ admits a operator-theoretic analogue, i.e., a commuting triple is a $\mathcal P$-unitary if and only if it can be represented as $\Psi(U_1,U_2,U_3)$ for a commuting triple of unitaries $U_1,U_2,U_3$. This gives a natural way of constructing $\mathcal P$-isometries from commuting triples of isometries. We then show that the same construction applied to a commuting triple having $\overline{\mathbb D}^3$ as a spectral set produces a $\mathcal P$-contraction, and that a unitary dilation of the original triple induces a $\mathcal P$-unitary dilation of the resulting triple.


\section{The Hardy space on the pentablock}\label{sec_02}

\noindent In this section, we introduce the Hardy space associated with the pentablock. In principle, there are several possible ways of associating a Hardy-type space with the pentablock. We begin by recalling the constructions of Hardy spaces on the symmetrized
bidisc and the tetrablock.  A common feature of these constructions is the
presence of a proper holomorphic map from a classical domain of the same
complex dimension onto the domain under consideration. 

\smallskip 

Recall that the symmetrized bidisc is given by
$\mathbb G_2=\pi_{\mathrm{symm}}(\mathbb D^2)$, where
$\pi_{\mathrm{symm}}:\mathbb C^2\to\mathbb C^2$ denotes the symmetrization map
defined by $\pi_{\mathrm{symm}}(z_1,z_2)=(z_1+z_2,z_1z_2)$. Its closure and
distinguished boundary are given by
$\Gamma_2=\pi_{\mathrm{symm}}(\overline{\mathbb D}^2)$ and
$b\Gamma_2=\pi_{\mathrm{symm}}(\mathbb T^2)$, respectively. The map $\pi_{\mathrm{symm}}$ is a proper holomorphic map of multiplicity two with $J_{\pi_{\mathrm{symm}}}(z_1,z_2)=z_1-z_2$. The authors of \cite{MisraShyamRoyZhang2013} introduced the Hardy space
$H^2(\mathbb G_2)$ as the space consisting of all holomorphic maps $g$ on $\mathbb G_2$ such that
\[
\|g\|_{H^2(\mathbb G_2)}^2
=
\frac{1}{2}
\sup_{0<r<1}
\int_{\mathbb T^2}
\left|g\circ\pi_{\mathrm{symm},r}(\lambda_1, \lambda_2)
\right|^2
\left|J_{\pi_{\mathrm{symm}}}(r\lambda_1,r\lambda_2)\right|^2
\,dm_2 < \infty,
\]
where $\pi_{\mathrm{symm},r}(\lambda_1,\lambda_2)
=
\left(r(\lambda_1+\lambda_2),r^2\lambda_1\lambda_2\right)$ and $m_2$ is the normalized Haar measure on $\mathbb T^2$. A similar construction from \cite{BeraChavanJain2025} applies to the tetrablock $\mathbb E$.  Let
\[
\mathcal R_{II}
=
\left\{
(z_1,z_2,z_3)\in\mathbb C^3:
\left\|
\begin{pmatrix}
z_1&z_3\\
z_3&z_2
\end{pmatrix}
\right\|<1
\right\}
\]
and consider $\varphi:\mathcal R_{II}\to \mathbb E$ defined as $
\varphi(z_1,z_2,z_3)
=
(z_1,z_2,z_1z_2-z_3^2)$. The map $\varphi$ is proper and its source and target have the same complex
dimension. Moreover, $J_\varphi(z_1,z_2,z_3) =-2z_3$. The corresponding Hardy space is given by
\[
H^2(\mathbb E)
=
\left\{
f\in\operatorname{Hol}(\mathbb E):
\|J_\varphi f\circ\varphi\|_{H^2(\mathcal R_{II})} <\infty 
\right\}.
\]
Hence, in both cases, the Hardy space on the target domain is obtained by pulling back to a domain of the same complex dimension and incorporating the
complex Jacobian of the proper holomorphic map. However, $\mathcal P$ does not admit a direct construction of this form from its natural matrix realization given by
\[
    \pi: \left\{A=[a_{ij}] \in M_2(\mathbb C): \|A\|<1  \right\} \to \mathcal P,
    \qquad
    \pi(A)=\bigl(a_{21},\operatorname{tr}(A),\det(A)\bigr)
\]
We recall from \cite{AglerLykovaYoung2015} that the extension of $\pi$ to the distinguished boundary $U(2)=\{A \in M_2(\mathbb C): A^*A=I\}$ of the normed unit ball in $M_2(\mathbb C)$ has image
\[
    K_1
    =
    \left\{
        (a,s,p)\in\mathcal P:
        |a|^2+\frac{|s|^2}{4} \leq 1, \ 
        (s,p)\in b\Gamma_2
    \right\},
\]
where $b\Gamma_2=\pi_{\mathrm{symm}}(\mathbb T^2)$ is the distinguished boundary of $\mathbb G_2$. One could therefore use the push-forward of the normalized Haar measure on $U(2)$ to define an $L^2$-space on $K_1$.  However, we shall not adopt this approach since $b\mathcal P\subsetneq K_1$.  Consequently, a boundary $L^2$-space obtained from the push-forward of Haar measure on $U(2)$ is, in general, a different boundary construction from one supported on $b\mathcal P$.  Since the Hardy space is intended here to reflect the distinguished boundary structure of the domain, we choose the latter.  This preserves the analogy with the Hardy space constructions for $\mathbb G_2$ and $\mathbb E$, in which the underlying boundary measure is supported on their distinguished boundaries. Also, note that the map
\[
    \pi:M_2(\mathbb C)\simeq\mathbb C^4\longrightarrow\mathbb C^3
\]
has source and target of different complex dimensions and hence there is no holomorphic Jacobian determinant.  Thus the matrix realization of $\mathcal P$ does not provide a direct Jacobian-weighted analogue of the corresponding constructions for $\mathbb E$. To overcome this, we consider a particularly useful parametrization of the distinguished boundary $b \mathcal P$ of the pentablock.  Recall from \cite{AglerLykovaYoung2015,Jin} that
\[
b\mathcal{P}
=
\left\{
(a,s,p)\in\mathbb{C}^3:
(s,p)\in b\Gamma_2,\;
|a|^2=1-\frac{|s|^2}{4}
\right\}.
\]
It is not difficult to see that $b\mathcal P$ admits the torus parameterization given by
\[
b \mathcal P=
\left\{\left(
\frac{\xi(\lambda_1-\lambda_2)}{2},
\lambda_1+\lambda_2,
\lambda_1\lambda_2
\right):
(\xi,\lambda_1,\lambda_2)\in\mathbb{T}^3\right\},
\]
This parameterization is generically $2$-to-$1$, reflecting the symmetry \((\xi,\lambda_1,\lambda_2)\mapsto(-\xi,\lambda_2,\lambda_1)\). Furthermore, let
$
\Psi:\mathbb D^3\to \mathbb C
$
be the holomorphic map given by
\[
\Psi(\xi,\lambda_1,\lambda_2)
=
\left(
\frac{\xi(\lambda_1-\lambda_2)}{2},
\lambda_1+\lambda_2,
\lambda_1\lambda_2
\right).
\]
The map $\Psi$ extends continuously to $\overline{\mathbb D}^3$ and $\Psi(\mathbb{T}^3)=b\mathcal{P}$. Let $m_k$ denote normalized Haar measure on the $k$-torus $\mathbb{T}^k$ and define the pushforward measure $\mu=\Psi_*m_3$.
Equivalently, for every \(F\in\mathcal{C}(b\mathcal{P})\),
\[
\int_{b\mathcal{P}}F(a,s,p)\,\mathrm{d}\mu
=
\int_{\mathbb{T}^3}
F\!\left(
\frac{\xi(\lambda_1-\lambda_2)}{2},
\lambda_1+\lambda_2,
\lambda_1\lambda_2
\right)
\,\mathrm{d}m_3(\xi,\lambda_1,\lambda_2).
\]
Thus, the map $\Psi$ gives a natural parametrization of the distinguished boundary itself, providing the boundary measure used in the definition above. To make the structure of the measure $\mu$ explicit, we first describe the
fibres of the natural projection from $b\mathcal{P}$ onto $b\Gamma_2$. Define
$ \operatorname{pr}:b\mathcal{P}\to b\Gamma_2$ as
    $\operatorname{pr}(a,s,p)=(s,p)$. For $(s,p)\in b\Gamma_2$, put
\[
    \rho(s,p)
    =
    \sqrt{1-\frac{|s|^2}{4}}.
\]
Since $|a|^2=1-|s|^2\slash 4$
on $b\mathcal{P}$, the fibre of $\operatorname{pr}$ over $(s,p)$ is
\[
    \operatorname{pr}^{-1}(s,p)
    =
    \left\{
        (a,s,p)\in b\mathcal{P}: |a|=\rho(s,p)
    \right\} =
    \left\{
        \bigl(\xi\rho(s,p),s,p\bigr):
        \xi\in\mathbb{T}
    \right\}.
\]
Thus, for each fixed $(s,p)\in b\Gamma_2$, the $a$-coordinate ranges over
a circle of radius $\rho(s,p)$. In particular, $\rho(s,p)$ is the radius
of the fibre over $(s,p)$. If $|s|=2$, then $\rho(s,p)=0$ and the
corresponding fibre degenerates to the single point $(0,s,p)$. This description gives a fibre decomposition of the measure $\mu$. More
precisely, if $\nu$ denotes the canonical probability measure on $b\Gamma_2$,
then
\[
    \int_{b\mathcal{P}} F(a,s,p)\,d\mu
    =
    \int_{b\Gamma_2}
    \int_{\mathbb{T}}
    F\bigl(\xi\rho(s,p),s,p\bigr)
    \,dm_1(\xi)\,d\nu(s,p),
    \qquad
    F\in C(b\mathcal{P}).
\]
Thus, on each fibre over $(s,p)$, the measure induced by $\mu$ is the
normalized rotationally invariant measure obtained from Haar measure on
$\mathbb{T}$ under the parametrization $\xi\longmapsto \xi\rho(s,p)$. This description immediately gives the invariance of $\mu$ under rotations
of the $a$-fibres. For $\eta\in\mathbb{T}$, define
$R_\eta(a,s,p)=(\eta a,s,p)$. For a point on the fibre over $(s,p)$, we have that
$
    R_\eta\bigl(\xi\rho(s,p),s,p\bigr)
    =
    \bigl((\eta\xi)\rho(s,p),s,p\bigr)$. Thus, $R_\eta$ fixes the base point $(s,p)$ and rotates the angular
parameter $\xi$ on the corresponding fibre. Since $m_1$ is normalized Haar
measure on $\mathbb{T}$,
\[
    \int_{\mathbb{T}} G(\eta\xi)\,dm_1(\xi)
    =
    \int_{\mathbb{T}} G(\xi)\,dm_1(\xi)
\]
for every $G\in L^1(\mathbb{T})$. Consequently,
\[
\begin{aligned}
    \int_{b\mathcal{P}} F\circ R_\eta\,d\mu
    =
    \int_{b\Gamma_2}
    \int_{\mathbb{T}}
    F\bigl(\eta\xi\rho(s,p),s,p\bigr)
    \,dm_1(\xi)\,d\nu(s,p)
    =
    \int_{b\mathcal{P}} F\,d\mu.
\end{aligned}
\]
In terms of pushforward measures, we have that $
    (R_\eta)_*\mu=\mu$ for every $\eta\in\mathbb{T}$. We next consider the $(1,1,2)$-quasi-circular action
$q_\omega(a,s,p)=(\omega a,\omega s,\omega^2p)$, where $\omega\in\mathbb{T}$. Since $\rho(\omega s,\omega^2p)
    =\rho(s, p)$, it follows that 
    \[
    q_\omega\bigl(\xi\rho(s,p),s,p\bigr)
    =
    \bigl(\omega\xi\rho(s,p),\omega s,\omega^2p\bigr)
    =
    \bigl(
        (\omega\xi)\rho(\omega s,\omega^2p),
        \omega s,\omega^2p
    \bigr).
\]    
Therefore, $q_\omega$ maps the fibre over $(s,p)$ onto the fibre over
$(\omega s,\omega^2p)$ and, on the angular parameter of the fibre, acts
by $\xi\mapsto\omega\xi$. The invariance of $\mu$ under $q_\omega$ is now an immediate consequence of the original torus parametrization, because
\[
\begin{aligned}
    \Psi(\xi,\omega\lambda_1,\omega\lambda_2)
    &=
    \left(
        \frac{\xi\omega(\lambda_1-\lambda_2)}{2},
        \omega(\lambda_1+\lambda_2),
        \omega^2\lambda_1\lambda_2
    \right)
   =
    q_\omega\bigl(\Psi(\xi,\lambda_1,\lambda_2)\bigr).
\end{aligned}
\]
The map
$
    (\xi,\lambda_1,\lambda_2)
    \mapsto
    (\xi,\omega\lambda_1,\omega\lambda_2)
$
preserves normalized Haar measure on $\mathbb{T}^3$. Since
$\mu=\Psi_*m_3$, it follows that $(q_\omega)_*\mu=\mu$. Equivalently,
\[
    \int_{b\mathcal{P}}
    F(\omega a,\omega s,\omega^2p)\,d\mu
    =
    \int_{b\mathcal{P}}
    F(a,s,p)\,d\mu,
    \qquad
    F\in C(b\mathcal{P}).
\]
Thus the measure $\mu$ is invariant under two natural symmetries of
$b\mathcal{P}$. The first symmetry rotates each circular $a$-fibre while
leaving its base point $(s,p)$ fixed. The second carries the fibre over
$(s,p)$ onto the fibre over $(\omega s,\omega^2p)$, preserving its radius
and rotating its angular parameter. These invariance properties follow,
respectively, from the rotational invariance of Haar measure on
$\mathbb{T}$ and from the invariance of Haar measure on $\mathbb{T}^3$
under the corresponding torus rotation. We now define the boundary Hardy space associated with this measure.

\begin{definition}
The boundary Hardy space of $\mathcal P$ is
$
H^2(\mathcal{P})
=
\overline{\mathbb{C}[a,s,p]}^{\,L^2(b\mathcal{P},\mu)}
$
equipped with the inner product
\[
\langle f,g\rangle_{H^2(\mathcal{P})}
=
\int_{b\mathcal{P}}f(a,s,p)\,\overline{g(a,s,p)}\,\mathrm{d}\mu.
\]
\end{definition}

The two invariance properties of $\mu$ established above provide two natural orthogonal decompositions of $H^2(\mathcal{P})$. The invariance
under the $(1,1,2)$-quasi-circular action leads to a decomposition
according to weighted homogeneous degree, while the rotational invariance
of the $a$-fibres yields a decomposition according to the powers of the first variable $a$. We describe these two decompositions in turn.

\smallskip

For $n\in\mathbb{N}_0$, let $\mathcal{Q}_n$ denote the space of weighted
homogeneous polynomials of degree $n$, i.e., $q(\omega a,\omega s,\omega^2p)
    =
    \omega^n q(a,s,p)$ for every $\omega\in\mathbb{T}$. More precisely,
\[
    \mathcal{Q}_n
    =
    \operatorname{span}
    \left\{
        a^j s^k p^\ell:
        j,k,\ell\geq0,\;
        j+k+2\ell=n
    \right\}.
\]
The invariance of \(\mu\) under the quasi-circular action implies \(\mathcal{Q}_n\perp\mathcal{Q}_m\) for \(n\neq m\). Since polynomials are dense by definition, we have the orthogonal decomposition
\begin{equation}\label{orth}
H^2(\mathcal{P})
=
\bigoplus_{n=0}^{\infty}\mathcal{Q}_n.
\end{equation}
Moreover, the dimension formula reads
\[
\dim\mathcal{Q}_n
=
\sum_{\ell=0}^{\lfloor n/2\rfloor}(n-2\ell+1)
=
\left\lfloor\frac{(n+2)^2}{4}\right\rfloor.
\]
The second decomposition originates from
the Hartogs fibre structure in
the $a$-variable. For fixed $(s,p)\in b\Gamma_2$, the corresponding fibre is
$
    \left\{
        \bigl(\xi\rho(s,p),s,p\bigr):
        \xi\in\mathbb{T}
    \right\}$. Thus, integration over the fibre is integration with respect to Haar
measure in the angular variable $\xi$. In particular, for
$j,k\in\mathbb{Z}_+$,
\[
\begin{aligned}
    \int_{\mathbb{T}}
    \bigl(\xi\rho(s,p)\bigr)^k
    \overline{\bigl(\xi\rho(s,p)\bigr)^j}
    \,dm_1(\xi)  =
    \rho(s,p)^{k+j}
    \int_{\mathbb{T}}\xi^{k-j}\,dm_1(\xi)
    =
    \begin{cases}
        \rho(s,p)^{2k}, & j=k,\\[4pt]
        0, & j\neq k.
    \end{cases}
\end{aligned}
\]
Consequently, distinct powers of the fibre variable $a$ are orthogonal
in $L^2(b\mathcal{P},\mu)$. This also explains the weights that occur on the symmetrized bidisc. Indeed, if $f\in\mathbb{C}[s,p]$, then
\[
\begin{aligned}
    \|a^k f\|_{H^2(\mathcal{P})}^2
   =
    \int_{b\Gamma_2}\int_{\mathbb{T}}
    \rho(s,p)^{2k}|f(s,p)|^2
    \,dm_1(\xi)\,d\nu(s,p)    =
    \int_{b\Gamma_2}
    |f(s,p)|^2\rho(s,p)^{2k}\,d\nu(s,p).
\end{aligned}
\]
Thus the natural coefficient space corresponding to the $k$-th power of
the fibre variable is the weighted Hardy space
\[
    H_k^2(\mathbb{G}_2)
    =
    \overline{\mathbb{C}[s,p]}^{
        \,L^2(b\Gamma_2,\rho^{2k}\,d\nu)
    } \quad \text{with norm}
\quad
    \|f\|_{H_k^2(\mathbb{G}_2)}^2
    =
    \int_{b\Gamma_2}
    |f(s,p)|^2\rho(s,p)^{2k}\,d\nu(s,p).
\]
Since $\rho(s,p)>0$ for $\nu$-almost every $(s,p)\in b\Gamma_2$, this
defines a norm on $\mathbb{C}[s,p]$. Every polynomial $q\in\mathbb{C}[a,s,p]$ has a unique  decomposition
\[
    q(a,s,p)
    =
    \sum_{k=0}^N a^k q_k(s,p),
    \qquad \text{and so,} \qquad    \|q\|_{H^2(\mathcal{P})}^2
    =
    \sum_{k=0}^N
    \|q_k\|_{H_k^2(\mathbb{G}_2)}^2, 
\]
where  $q_k\in\mathbb{C}[s,p]$. Motivated by this, we present the following result. 

\begin{proposition}\label{prop:decompositions}
The Hardy space $H^2(\mathcal{P})$ admits the two orthogonal
decompositions
\begin{equation}\label{eq:quasi-decomposition}
    H^2(\mathcal{P})
    =
    \bigoplus_{n=0}^{\infty}\mathcal{Q}_n=
    \bigoplus_{k=0}^{\infty}
    a^k H_k^2(\mathbb{G}_2),
\end{equation}
where $\mathcal{Q}_n$ is the space of weighted homogeneous polynomials
of degree $n$. Moreover, every $f\in H^2(\mathcal{P})$ has a unique expansion
$f(a,s,p) =
    \overset{\infty}{\underset{k=0}{\sum}}a^k f_k(s,p)$ for  $f_k\in H_k^2(\mathbb{G}_2)$ with convergence in $H^2(\mathcal{P})$ and
\[
    \|f\|_{H^2(\mathcal{P})}^2
    =
    \sum_{k=0}^{\infty}
    \|f_k\|_{H_k^2(\mathbb{G}_2)}^2.
\]
\end{proposition}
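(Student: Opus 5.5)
The first decomposition, $H^2(\mathcal P)=\bigoplus_n\mathcal Q_n$, is already recorded in \eqref{orth}, so the real content is the second one and the expansion. I would argue entirely inside $L^2(b\mathcal P,\mu)$ through the unitary map given by the fibre parametrization.

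\emph{Step 1: the closed subspaces $a^kH^2_k(\mathbb G_2)$.} For each $k\ge0$, let $V_k:\mathbb C[s,p]\to L^2(b\mathcal P,\mu)$ be $V_kf=a^kf$. The computation just before the proposition gives $\|a^kf\|_{H^2(\mathcal P)}=\|f\|_{H^2_k(\mathbb G_2)}$. Hence $V_k$ extends to an isometry from $H^2_k(\mathbb G_2)$ onto the $L^2$-closure $\mathcal H_k$ of $a^k\mathbb C[s,p]$. I would make the identification $\mathcal H_k=a^kH^2_k(\mathbb G_2)$ concrete in two moves. First, the fibre disintegration sends $\mu$-a.e.\ limits to $\rho^{2k}\nu$-a.e.\ limits. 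Second, since $\rho>0$ $\nu$-a.e., every element of $\mathcal H_k$ is represented as $a^kf(s,p)$ with $f$ in the abstract completion. Each $\mathcal H_k$ lies in $H^2(\mathcal P)$, being the closure of polynomials.

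\emph{Step 2: orthogonality and density.} For $j\neq k$ and $f,g\in\mathbb C[s,p]$, the fibre integral computed above gives
\[
\langle a^kf,a^jg\rangle=\int_{b\Gamma_2}f\bar g\,\rho^{j+k}\int_{\mathbb T}\xi^{k-j}\,dm_1\,d\nu=0 .
\]
Equivalently, one can use $(R_\eta)_*\mu=\mu$, which gives $\langle a^kf,a^jg\rangle=\eta^{k-j}\langle a^kf,a^jg\rangle$ for all $\eta\in\mathbb T$. Orthogonality passes to closures, so the $\mathcal H_k$ are mutually orthogonal closed subspaces. Their algebraic sum contains $\mathbb C[a,s,p]$, by the unique decomposition $q=\sum a^kq_k$. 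Therefore $\bigoplus_k\mathcal H_k$, which is closed as an orthogonal direct sum of closed subspaces, equals $H^2(\mathcal P)$.

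\emph{Step 3: expansion and norm.} With $P_k$ the orthogonal projection onto $\mathcal H_k$, every $f\in H^2(\mathcal P)$ satisfies $f=\sum_kP_kf$ in norm, and Parseval gives $\|f\|^2=\sum\|P_kf\|^2$. Writing $P_kf=a^kf_k$ with $f_k=V_k^{-1}P_kf\in H^2_k(\mathbb G_2)$, the isometry gives $\|P_kf\|=\|f_k\|_{H^2_k}$. Uniqueness follows from orthogonality together with injectivity of $V_k$: if $\sum a^kf_k=0$, then projecting gives $a^kf_k=0$ and hence $f_k=0$.

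The only delicate point is Step 1, namely making sure that $a^kH^2_k(\mathbb G_2)$ makes sense as a subspace of $L^2(\mu)$, with $f_k$ a genuine function on $b\Gamma_2$. This is handled by the disintegration formula together with the a.e.\ positivity of $\rho$.
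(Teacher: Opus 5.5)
Your proof is correct. It rests on the same two facts as the paper: orthogonality of distinct powers of $a$, coming from the fibre integral, and density of $\mathbb C[a,s,p]$. The difference lies in how these are assembled.

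The paper fixes polynomials $q^{(n)}\to f$ and shows that their $(s,p)$-coefficients $q_k^{(n)}$ are Cauchy in each $H_k^2(\mathbb G_2)$. It then defines $f_k$ as the limits and checks that $\sum_k\|f_k\|_{H_k^2(\mathbb G_2)}^2<\infty$. Finally it identifies $g=\sum_k a^kf_k$ with $f$ through the Fatou-type estimate $\|g-q^{(n)}\|\le\|f-q^{(n)}\|$. In effect, it reproves by hand that mutually orthogonal closed subspaces with dense algebraic sum exhaust the space.

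You instead realize $a^kH_k^2(\mathbb G_2)$ as the range $\mathcal H_k$ of the isometric extension of $V_k$. You then apply that Hilbert-space fact directly and read off $f_k=V_k^{-1}P_kf$. Your route is shorter and gives an intrinsic formula for $f_k$ through the projection $P_k$. It also makes explicit, via the disintegration of $\mu$ over $\nu$, that the extended $V_k$ is the concrete map $f\mapsto a^kf(s,p)$. The paper uses this point tacitly when it treats $a^kf_k$ as an element of $H^2(\mathcal P)$.

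The paper's version is more hands-on, building $f_k$ directly from the coefficients of approximating polynomials. In your setup this also follows at once: $P_kq^{(n)}=a^kq_k^{(n)}$, and $P_k$ is continuous.

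One small correction of wording: $H_k^2(\mathbb G_2)$ is defined as a closure inside $L^2(b\Gamma_2,\rho^{2k}\,d\nu)$, so no abstract completion is involved. The a.e. positivity of $\rho$ only ensures that $\rho^{2k}\nu$ and $\nu$ have the same null sets.
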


\begin{proof}
By above discussion, it suffices to prove the expansion of $f$ as in the statement. Let $f\in H^2(\mathcal P)$. Since the polynomials are dense in $H^2(\mathcal P)$, there exists a sequence of polynomials $\{q^{(n)}\}_{n\geq1}$ such that $q^{(n)}\to f$ in $H^2(\mathcal P)$. For each $n$, write
\[
q^{(n)}(a,s,p)
=
\sum_{k=0}^{N_n}a^kq_k^{(n)}(s,p),
\qquad
q_k^{(n)}\in\mathbb C[s,p],
\]
and set $q_k^{(n)}=0$ for $k>N_n$. By the orthogonality of distinct powers of $a$ and the definition of
$H_k^2(\mathbb{G}_2)$, we have that
\[
\left\|
\sum_{k=0}^{\infty}a^kq_k^{(n)}
\right\|_{H^2(\mathcal P)}^2
=
\sum_{k=0}^{\infty}
\|q_k^{(n)}\|_{H_k^2(\mathbb{G}_2)}^2
\ \ \text{and } \ 
\|q^{(n)}-q^{(m)}\|_{H^2(\mathcal P)}^2
=
\sum_{k=0}^{\infty}
\|q_k^{(n)}-q_k^{(m)}\|_{H_k^2(\mathbb{G}_2)}^2.
\]
Since $\{q^{(n)}\}$ converges in $H^2(\mathcal P)$, it is a Cauchy
sequence. Thus $\{q_k^{(n)}\}_{n\geq1}$ is Cauchy in $H_k^2(\mathbb{G}_2)$. Since
$H_k^2(\mathbb{G}_2)$ is complete, there exists $f_k\in H_k^2(\mathbb{G}_2)$
such that $q_k^{(n)}
\to f_k$ in $H_k^2(\mathbb{G}_2)$. Next, we show that
$
\sum_{k=0}^{\infty}a^kf_k
$
converges in $H^2(\mathcal P)$. Let $C>0$ be such that $\|q^{(n)}\|_{H^2(\mathcal P)}^2\leq C$ for $n \geq 1$. Fix $N \in \mathbb N$. Then
\[
\sum_{k=0}^{N}
\|q_k^{(n)}\|_{H_k^2(\mathbb{G}_2)}^2
\leq C \quad \text{and so,} \quad \sum_{k=0}^{N}
\|f_k\|_{H_k^2(\mathbb{G}_2)}^2
\leq C.
\]
Since $N$ is arbitrary, it follows that 
\[
\sum_{k=0}^{\infty}
\|f_k\|_{H_k^2(\mathbb{G}_2)}^2<\infty
\quad \text{and thus} \quad 
\left\|
\sum_{k=N+1}^{M}a^kf_k
\right\|_{H^2(\mathcal P)}^2
=
\sum_{k=N+1}^{M}
\|f_k\|_{H_k^2(\mathbb{G}_2)}^2
\to 0
\]
as $M,N\to\infty$. Since $H^2(\mathcal P)$ is complete, there exists $g \in H^2(\mathcal P)$ such that $g=
\sum_{k=0}^{\infty}a^kf_k$. It remains to identify $g$ with $f$. Fix $n\geq1$. Since
$q_k^{(m)}\to f_k$ in $H_k^2(\mathbb G_2)$ for every $k$, we have by Fatou's lemma that
\[
\begin{aligned}
\sum_{k=0}^{\infty}
\|f_k-q_k^{(n)}\|_{H_k^2(\mathbb G_2)}^2
\leq
\liminf_{m}
\sum_{k=0}^{\infty}
\|q_k^{(m)}-q_k^{(n)}\|_{H_k^2(\mathbb G_2)}^2
&=
\liminf_{m}
\|q^{(m)}-q^{(n)}\|_{H^2(\mathcal P)}^2\\
&=
\|f-q^{(n)}\|_{H^2(\mathcal P)}^2
\end{aligned}
\]
for every $n \geq 1$. Also, the orthogonality of the summands in the series representation of $g$ and $q^{(n)}$ gives
\[
\|g-q^{(n)}\|_{H^2(\mathcal P)}^2
=
\sum_{k=0}^{\infty}
\|f_k-q_k^{(n)}\|_{H_k^2(\mathbb G_2)}^2 \quad \text{and so,} \quad 
\|g-q^{(n)}\|_{H^2(\mathcal P)}
\leq
\|f-q^{(n)}\|_{H^2(\mathcal P)}
\to 0
\]
as $n \to \infty$. Then  $\|f-g\|_{H^2(\mathcal P)}
\leq \|f-q^{(n)}\|_{H^2(\mathcal P)}
+
\|q^{(n)}-g\|_{H^2(\mathcal P)}
\to 0$. Hence $f=g$, and therefore
\[
f(a,s,p)
=
\sum_{k=0}^{\infty}a^kf_k(s,p)
\]
with convergence in $H^2(\mathcal P)$. The uniqueness of the expansion follows from the orthogonality of the
subspaces $a^kH_k^2(\mathbb G_2)$. Indeed, if
$
\sum_{k=0}^{\infty}a^kf_k=0,
$
then, for every $j\geq0$,
\[
0
=
\left\langle
\sum_{k=0}^{\infty}a^kf_k,a^jf_j
\right\rangle_{H^2(\mathcal P)}
=
\|f_j\|_{H_j^2(\mathbb G_2)}^2,
\]
and hence $f_j=0$. Thus the expansion is unique. Since the partial sums converge to $f$ and the summands are
orthogonal, it follows that
\[
\left\|
\sum_{k=0}^{N}a^kf_k
\right\|_{H^2(\mathcal P)}^2
=
\sum_{k=0}^{N}
\|f_k\|_{H_k^2(\mathbb G_2)}^2 \quad \text{and so,} \quad 
\|f\|_{H^2(\mathcal P)}^2
=
\sum_{k=0}^{\infty}
\|f_k\|_{H_k^2(\mathbb G_2)}^2.
\]
This completes the proof.
\end{proof}

Next, we show that each function in $H^2(\mathcal P)$ is holomorphic on $\mathcal P$ and the space $H^2(\mathcal P)$ is a reproducing kernel Hilbert space. To do so, we need the following lemma establishing an estimate that
will allow us to pass from the orthogonal decomposition in
$H^2(\mathcal{P})$ to a locally uniformly convergent holomorphic series.

\begin{lemma}\label{lemma1}
Let \(K\Subset\mathcal{P}\). There exist constants \(0<r_K<1\) and \(C_K>0\) such that
\[
\sup_{z\in K}|q(z)|
\leq
C_K(n+1)^{3/2}r_K^n\|q\|_{L^2(b\mathcal{P})}
\]
for every \(q\in\mathcal{Q}_n\).
\end{lemma}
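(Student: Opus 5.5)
The idea is to combine the weighted homogeneity of $q\in\mathcal Q_n$ with a sup-norm bound for $q$ on $\overline{\mathcal P}$, obtained by pulling back along $\Psi$. For the sup-norm bound, recall that $b\mathcal P$ is the distinguished (Shilov) boundary of $\overline{\mathcal P}$ for polynomials. So for every polynomial $q$,
\[
\sup_{\overline{\mathcal P}}|q|=\sup_{b\mathcal P}|q|=\sup_{\mathbb T^3}|q\circ\Psi|.
\]
Now let $q\in\mathcal Q_n$ and put $F=q\circ\Psi$.
\begin{itemize}
\item The coordinate $a\circ\Psi=\xi(\lambda_1-\lambda_2)/2$ has degree $1$ in $\xi$ and is homogeneous of degree $1$ in $(\lambda_1,\lambda_2)$.
\item The coordinates $s\circ\Psi$ and $p\circ\Psi$ are homogeneous in $(\lambda_1,\lambda_2)$ of degrees $1$ and $2$, and do not involve $\xi$.
\end{itemize}
Hence every monomial $a^js^kp^\ell$ with $j+k+2\ell=n$ pulls back to a polynomial that is homogeneous of degree $n$ in $(\lambda_1,\lambda_2)$ and has $\xi$-degree at most $n$. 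Consequently $F$ lies in the span of the monomials $\xi^i\lambda_1^j\lambda_2^{n-j}$ with $0\le i,j\le n$, and there are at most $(n+1)^2$ of them.

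Write $F=\sum c_\alpha z^\alpha$. By the Cauchy--Schwarz inequality and Parseval on $\mathbb T^3$,
\[
\sup_{\mathbb T^3}|F|\le\sum|c_\alpha|\le (n+1)\Bigl(\sum|c_\alpha|^2\Bigr)^{1/2}=(n+1)\|F\|_{L^2(m_3)}=(n+1)\|q\|_{L^2(b\mathcal P,\mu)},
\]
where the last equality holds because $\mu=\Psi_*m_3$. This gives $\sup_{\overline{\mathcal P}}|q|\le (n+1)\|q\|_{L^2(b\mathcal P)}$, which is even better than the stated factor $(n+1)^{3/2}$. If one prefers not to invoke the Shilov boundary property here, an alternative is a cruder coefficient bound that fits the allowed $(n+1)^{3/2}$, for instance via $\dim\mathcal Q_n\lesssim (n+1)^2$ together with a reproducing-kernel estimate on each $\mathcal Q_n$.

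The remaining step is the radial scaling. Since $q\in\mathcal Q_n$, we have $q(\delta_r z)=r^nq(z)$, where $\delta_r(a,s,p)=(ra,rs,r^2p)$. We will show there is $r_K<1$ with $\delta_{1/r_K}(K)\subset\overline{\mathcal P}$. Then for every $z\in K$,
\[
|q(z)|=r_K^n\,|q(\delta_{1/r_K}z)|\le (n+1)\,r_K^n\,\|q\|_{L^2(b\mathcal P)},
\]
and the lemma follows with $C_K=1$.

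To find $r_K$, use that $\mathcal P$ is open and $(1,1,2)$-quasi-balanced. For each $z\in\mathcal P$, continuity of $t\mapsto\delta_t z$ at $t=1$ gives some $t_z>1$ with $\delta_t z\in\mathcal P$ for $1\le t\le t_z$. A compactness argument then makes this uniform over $K$. One can cover $K$ by finitely many neighbourhoods on which a common $t$ works, using joint continuity of $(t,z)\mapsto\delta_t z$ and openness of $\mathcal P$. Alternatively, note that the function $z\mapsto\sup\{t:\delta_t z\in\overline{\mathcal P}\}$ is lower semicontinuous and exceeds $1$ on $\mathcal P$, so it is bounded below by some $t_K>1$ on $K$. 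Setting $r_K=1/t_K$ completes the argument.

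I expect the main obstacle to be the sup-norm step. It relies on $b\mathcal P$ being a boundary for polynomials on $\overline{\mathcal P}$, which is part of the cited fact that $b\mathcal P$ is the distinguished boundary (\cite{AglerLykovaYoung2015, Jin}). One should also make sure that the quasi-balanced property is used on the open set $\mathcal P$ rather than only on its closure.
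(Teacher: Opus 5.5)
Your proposal is correct and follows the paper's proof: rescale by $\delta_{1/r_K}$ using weighted homogeneity, bound by the supremum over $b\mathcal P=\Psi(\mathbb T^3)$ via the distinguished-boundary property, and apply Cauchy--Schwarz and Parseval to the Fourier coefficients of $q\circ\Psi$. The one difference is your sharper monomial count, $(n+1)^2$ (from homogeneity of degree $n$ in $(\lambda_1,\lambda_2)$) instead of the paper's $(n+1)^3$; this count is valid and gives the factor $(n+1)$ in place of $(n+1)^{3/2}$.
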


\begin{proof}
Since $K\Subset\mathcal{P}$ and $\mathcal{P}$ is open, there exists $R>1$, sufficiently close to $1$, such that $\delta_R(K)\Subset\mathcal{P}$. Here, $\delta_r(a, s, p)=(ra, rs, r^2p)$ for $0<r<1$. Put $r_K=R^{-1}$. Since $q\in\mathcal{Q}_n$, its weighted homogeneity
gives that $
    q(\lambda a,\lambda s,\lambda^2p)
    =
    \lambda^n q(a,s,p)$ for every $\lambda\in\mathbb{C}$. Hence, for $z\in K$ and $w=\delta_R(z)$,
\[
    |q(z)|
    =
    R^{-n}|q(w)|
    =
    r_K^n|q(w)| 
    \leq
    r_K^n\sup_{\mathcal{P}}|q|
    \leq
    r_K^n\|q\|_{L^\infty(b\mathcal{P})}.
\]
It now remains to estimate the boundary supremum in terms of the boundary
$L^2$ norm. Let
$Q=q\circ\Psi$. Since $q\in\mathcal{Q}_n$, the function $Q$ is a trigonometric
polynomial on $\mathbb{T}^3$ involving at most $(n+1)^3$ Fourier
monomials. Writing
\[
    Q=\sum_{\alpha\in A}c_\alpha\chi_\alpha \quad \text{with} \quad |A|\leq(n+1)^3.
\]
Hence, the Cauchy-Schwarz inequality and orthogonality of the characters give
\[
\begin{aligned}
 \|q\|_{L^\infty(b \mathcal P)} =  \|Q\|_{L^\infty(\mathbb{T}^3)}
    \leq
    \sum_{\alpha\in A}|c_\alpha|
    \leq
    |A|^{1/2}
    \left(\sum_{\alpha\in A}|c_\alpha|^2\right)^{1/2}
    \leq
    (n+1)^{3/2}\|Q\|_{L^2(\mathbb{T}^3)}.
\end{aligned}
\]
Since $\mu=\Psi_*m_3$, we have that $\|Q\|_{L^2(\mathbb{T}^3)}=
    \|q\|_{L^2(b\mathcal{P},\mu)}$
and therefore
\[
    \sup_{z\in K}|q(z)|
    \leq
    (n+1)^{3/2}r_K^n
    \|q\|_{L^2(b\mathcal{P},\mu)}.
\]
This proves the lemma.
\end{proof}

The estimate in Lemma \ref{lemma1} gives sufficient decay in the weighted degree to sum the homogeneous components uniformly on compact subsets of $\mathcal{P}$. We therefore obtain the following holomorphic realization theorem of the Hardy space $H^2(\mathcal P)$.

\begin{theorem}[Holomorphic realization]\label{thm:hol-realization}
Let $f\in H^2(\mathcal P)$ and let
$f=\sum_{n=0}^{\infty}q_n$ be its weighted homogeneous decomposition with each $q_n \in \mathcal Q_n$. Then, for
every $z\in\mathcal P$, the series
$\sum_{n=0}^{\infty}q_n(z)$
converges absolutely and the convergence is uniform on compact subsets of $\mathcal P$. The resulting map
\[
    T:H^2(\mathcal P)\to \operatorname{Hol}(\mathcal P), \quad     (Tf)(z)=\sum_{n=0}^{\infty}q_n(z)
\]
is injective and continuous when $\operatorname{Hol}(\mathcal P)$ is equipped with the compact-open topology. Also, $H^2(\mathcal P)$ is a reproducing kernel Hilbert space.
\end{theorem}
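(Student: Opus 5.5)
The plan is to combine Lemma \ref{lemma1} with the orthogonal decomposition \eqref{orth}. Fix $f\in H^2(\mathcal P)$ with $f=\sum_n q_n$, $q_n\in\mathcal Q_n$, so that $\|f\|^2=\sum_n\|q_n\|^2$ and in particular $\|q_n\|\le\|f\|$ for every $n$.

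\textbf{Convergence.} For a compact $K\Subset\mathcal P$, Lemma \ref{lemma1} gives
\[
\sup_K|q_n|\le C_K(n+1)^{3/2}r_K^n\|q_n\|\le C_K(n+1)^{3/2}r_K^n\|f\|_{H^2(\mathcal P)} .
\]
Since $r_K<1$, the series $\sum_n (n+1)^{3/2}r_K^n$ converges. By the Weierstrass M-test, $\sum_n q_n$ converges absolutely and uniformly on $K$. Each $q_n$ is a polynomial, so the locally uniform limit $Tf$ is holomorphic on $\mathcal P$. Summing the same bound also shows
\[
\sup_K|Tf|\le C_K' \|f\|, \qquad C_K'=C_K\sum_n (n+1)^{3/2}r_K^n .
\]
The map $T$ is clearly linear. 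This seminorm estimate is exactly continuity into $\operatorname{Hol}(\mathcal P)$ with the compact-open topology. Taking $K=\{z\}$ shows that each point evaluation $f\mapsto (Tf)(z)$ is a bounded linear functional.

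\textbf{Injectivity.} This is the step needing care. Suppose $Tf\equiv 0$ on $\mathcal P$. I would use the circular structure: $\mathcal P$ is $(1,1,2)$-quasi-balanced and contains $0$, so for $z\in\mathcal P$ and $\lambda\in\overline{\mathbb D}$ we have $(\lambda a,\lambda s,\lambda^2p)\in\mathcal P$. Then
\[
0=Tf(\lambda a,\lambda s,\lambda^2 p)=\sum_n \lambda^n q_n(z),
\]
a power series in $\lambda$ converging on a neighbourhood of $0$. Its coefficients must all vanish, so $q_n(z)=0$ for every $n$ and every $z\in\mathcal P$. A polynomial vanishing on the open set $\mathcal P$ is identically zero, so $q_n=0$ as a polynomial, hence also as an element of $L^2(b\mathcal P,\mu)$. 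Therefore $f=\sum_n q_n=0$ by \eqref{orth}.

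\textbf{Reproducing kernel.} Identify $H^2(\mathcal P)$ with its image $T(H^2(\mathcal P))\subset\operatorname{Hol}(\mathcal P)$, carrying the transported norm. This is a Hilbert space of genuine functions on $\mathcal P$ because $T$ is injective, and point evaluations are bounded by the step above. The Riesz representation theorem then gives kernels $k_z$ with $(Tf)(z)=\langle f,k_z\rangle$, so $H^2(\mathcal P)$ is a reproducing kernel Hilbert space.

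The only delicate points are, first, the use of quasi-balancedness in the injectivity step, and second, the fact that Lemma \ref{lemma1} is applied with $\|q_n\|\le\|f\|$ uniformly in $n$.
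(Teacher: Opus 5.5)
Your proof is correct and follows essentially the same route as the paper. You use Lemma \ref{lemma1} with the orthogonal decomposition \eqref{orth} to get a summable majorant for the M-test and the bound $\sup_K|Tf|\le A_K\|f\|$. Injectivity comes from expanding $\lambda\mapsto Tf(\lambda a,\lambda s,\lambda^2p)$ in powers of $\lambda$, and bounded point evaluations give the reproducing kernel. The only cosmetic difference is that you use $\|q_n\|\le\|f\|$ where the paper applies Cauchy--Schwarz against $\bigl(\sum_n\|q_n\|^2\bigr)^{1/2}$, which gives the same kind of estimate.
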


\begin{proof}
Let $K$ be a compact subset of $\mathcal P$ and let $n \geq 0$. By Lemma \ref{lemma1}, there
exist constants $C_K>0$ and $0<r_K<1$ such that
$|q_n(z)|
    \leq
    C_K(n+1)^{3/2}r_K^n
    \|q_n\|_{L^2(b\mathcal P,\mu)}$ for every $z \in K$. Since the decomposition of $f$ is orthogonal in $H^2(\mathcal P)$, we have
\[
    \sum_{n=0}^{\infty}
    \|q_n\|_{L^2(b\mathcal P,\mu)}^2
    =
    \|f\|_{H^2(\mathcal P)}^2
    <\infty.
\]
By the Cauchy-Schwarz inequality,
\[
\begin{aligned}
    \sum_{n=0}^{\infty}C_K(n+1)^{3/2}r_K^n
    \|q_n\|_{L^2(b\mathcal P,\mu)}
    \leq C_K
    \left(
        \sum_{n=0}^{\infty}
        (n+1)^3r_K^{2n}
    \right)^{1/2}
    \left(
        \sum_{n=0}^{\infty}
        \|q_n\|_{L^2(b\mathcal P,\mu)}^2
    \right)^{1/2}.
\end{aligned}
\]
The first series on the right is finite since $0<r_K<1$, while the
second one equals $\|f\|_{H^2(\mathcal P)}^2$. By the Weierstrass $M$-test, the series
\[
    \sum_{n=0}^{\infty}q_n(z)
\]
converges absolutely and uniformly for $z\in K$. Since $K\Subset\mathcal
P$ was arbitrary, the convergence is locally uniform on $\mathcal P$. Each $q_n$ is a polynomial and hence holomorphic on $\mathcal P$.
Therefore, by the Weierstrass theorem on locally uniform limits of
holomorphic functions, the function
\[
    Tf(z)=\sum_{n=0}^{\infty}q_n(z)
\]
is holomorphic on $\mathcal P$. We next prove that $T$ is injective. Suppose that $Tf\equiv0$ on
$\mathcal P$. Fix $z=(a,s,p)\in\mathcal P$. Then, for every $\lambda\in\mathbb D$, we have
\[
\begin{aligned}
    0
    =Tf(\lambda a,\lambda s,\lambda^2p)
    =\sum_{n=0}^{\infty}
      q_n(\lambda a,\lambda s,\lambda^2p)
    =\sum_{n=0}^{\infty}
      \lambda^n q_n(a,s,p).
\end{aligned}
\]
The second equality is justified by the locally uniform convergence
already proved, while the third follows from the weighted homogeneity
of $q_n$. Consequently, $q_n\equiv 0$ on $\mathcal P$ for every $n$ and thus
$f=0$ in $H^2(\mathcal P)$ showing that $T$ is injective. For
$K\Subset\mathcal P$, there exists a constant $A_K>0$, depending only on $K$, such that
\[
    \sup_{z\in K}|Tf(z)|
    \leq
    A_K\|f\|_{H^2(\mathcal P)}.
\]
This proves that $T$ is continuous from $H^2(\mathcal P)$ into
$\operatorname{Hol}(\mathcal P)$ endowed with the compact-open topology, and that the point evaluations are bounded linear functionals on $H^2(\mathcal P)$. Thus $H^2(\mathcal P)$ is a reproducing kernel Hilbert space.
\end{proof}

The above theorem identifies each element of the boundary Hardy
space with a holomorphic function on $\mathcal{P}$. It remains to
characterize this holomorphic realization intrinsically in terms of radial boundary norms.  The $(1,1,2)$-quasi-balanced property of $\mathcal P$ provides the radial maps
\[
    \delta_r(a,s,p)=(ra,rs,r^2p), \qquad 0\leq r<1.
\]
It is not difficult to see that $\delta_r(\overline{\mathcal P}) \subseteq \mathcal P$ for every $r \in (0, 1)$. This leads naturally to the following radial
Hardy space:
\[
H^2_{\mathrm{rad}}(\mathcal{P})
=
\left\{
f\in\operatorname{Hol}(\mathcal{P}):
\|f\|_{H^2_{\mathrm{rad}}(\mathcal{P})}
<\infty
\right\},
\]
where
\[
\|f\|_{H^2_{\mathrm{rad}}(\mathcal{P})}=
\sup_{0<r<1}
\left(\int_{b\mathcal{P}}
|f\circ\delta_r|^2\,d\mu\right)^{1\slash 2}.
\]
We first verify that the above quantity defines a norm. Assume that $
    \|f\|_{H^2_{\mathrm{rad}}(\mathcal P)}=0$. Then, for every $r\in(0,1)$, we have that $f\circ\delta_r=0$ $\mu$-almost everywhere on $b\mathcal P$. Since $\mu=\Psi_*m_3$ and
$\Psi(\mathbb T^3)=b\mathcal P$, while normalized Haar measure $m_3$ has
full support on $\mathbb T^3$, the measure $\mu$ has full support on
$b\mathcal P$. Moreover, $f\circ\delta_r$ is continuous on
$\overline{\mathcal P}$. It follows that $f\circ\delta_r=0$ on $b \mathcal P$. Let us define 
\[
\mathcal{P}_r=\delta_r(\overline {\mathcal P})=\{(a, s, p): (r^{-1}a, r^{-1}s, r^{-2}p) \in \overline{\mathcal P}\}.
\]
The authors of \cite{AglerLykovaYoung2015} proved that $\mathcal P=\cup_{0<r<1}\mathcal P_r$. Fix $r\in(0,1)$ and set
$f_r =f\circ\delta_r$. Since $\delta_r(\overline{\mathcal P})\subset\mathcal P$, the function
$f_r$ is holomorphic on $\mathcal P$ and continuous on
$\overline{\mathcal P}$. We have already shown that
$f_r=0$ on $b\mathcal P$. Since $b\mathcal P$ is the Shilov boundary
of $\mathcal P$, it follows that  $f_r=0$ on $\overline{\mathcal P}$. Consequently, $f=0$ on $\mathcal P_r$ for every $r \in (0, 1)$ and thus, $f=0$ on $\mathcal P$. Hence, $\|f\|_{H^2_{\mathrm{rad}}(\mathcal{P})}=0$ if and only if $f=0$ for every $f \in \text{Hol}(\mathcal P)$. The absolute homogeneity and the triangle inequality follow from the
corresponding properties of the $L^2(b\mathcal P,\mu)$ norm. Thus $H^2_{\mathrm{rad}}(\mathcal P)$ is a normed space. We shall establish completeness after identifying this space with the boundary Hardy space
$H^2(\mathcal P)$. The next result provides this identification and at
the same time, it shows that the radial norm is precisely the boundary $H^2$-norm. 

\begin{theorem}[Radial realization]\label{thm:radial-realization}
The holomorphic realization identifies $H^2(\mathcal P)$ isometrically
with $H^2_{\mathrm{rad}}(\mathcal P)$. More precisely, if
$f=\sum_{n=0}^{\infty}q_n$ is the weighted homogeneous decomposition of $f$ in $H^2(\mathcal P)$ with each $q_n\in\mathcal Q_n$,
then
\[
    \int_{b\mathcal P}|f\circ\delta_r|^2\,d\mu
    =
    \sum_{n=0}^{\infty}
    r^{2n}\|q_n\|_{L^2(b\mathcal P,\mu)}^2 \ \ (0<r<1)
\ \ \text{and} \ \
    \|f\|_{H^2(\mathcal P)}^2
    =
    \sup_{0<r<1}
    \int_{b\mathcal P}|f\circ\delta_r|^2\,d\mu.
\]
Conversely, every $f\in H^2_{\mathrm{rad}}(\mathcal P)$ admits a unique expansion $f=\sum_{n=0}^{\infty}q_n$ with 
    $q_n\in\mathcal Q_n$ and
 the series converges locally uniformly on $\mathcal P$ and
\[
    \sum_{n=0}^{\infty}
    \|q_n\|_{L^2(b\mathcal P,\mu)}^2<\infty.
\]
\end{theorem}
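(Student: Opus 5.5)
The plan is to work throughout with the weighted homogeneous decomposition \eqref{orth} and the invariance of $\mu$ under the quasi-circular action $q_\omega$.

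\emph{Forward direction.} Take $f\in H^2(\mathcal P)$, write $f=\sum_n q_n$, and let $Tf$ be its holomorphic realization from Theorem \ref{thm:hol-realization}. Fix $r\in(0,1)$. Then $\delta_r(\overline{\mathcal P})$ is a compact subset of $\mathcal P$, so Lemma \ref{lemma1} gives $\sup_{\delta_r(\overline{\mathcal P})}|q_n|\le C(n+1)^{3/2}\rho^n\|q_n\|$ for some $\rho<1$. Hence the series
\[
(Tf)\circ\delta_r=\sum_n r^n q_n
\]
converges uniformly on $\overline{\mathcal P}$, in particular on $b\mathcal P$, and therefore also in $L^2(b\mathcal P,\mu)$.

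Since $\mathcal Q_n\perp\mathcal Q_m$ for $n\neq m$, Parseval gives
\[
\int_{b\mathcal P}|f\circ\delta_r|^2\,d\mu=\sum_n r^{2n}\|q_n\|^2 .
\]
This quantity is increasing in $r$, so monotone convergence shows that its supremum over $r$ equals $\sum_n\|q_n\|^2=\|f\|^2_{H^2(\mathcal P)}$. Consequently $Tf\in H^2_{\mathrm{rad}}(\mathcal P)$, and the map $T$ is isometric.

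\emph{Converse.} Take $f\in\operatorname{Hol}(\mathcal P)$ with finite radial norm. The first task is to produce the homogeneous expansion intrinsically. Fix $z\in\mathcal P$. Since $\delta_\lambda$ for $|\lambda|\le 1$ preserves $\mathcal P$ (quasi-balancedness), the function $\lambda\mapsto f(\lambda a,\lambda s,\lambda^2p)$ is holomorphic on a neighbourhood of $\overline{\mathbb D}$. Define
\[
q_n(z)=\frac{1}{2\pi i}\oint_{|\lambda|=1}\frac{f(\lambda a,\lambda s,\lambda^2 p)}{\lambda^{n+1}}\,d\lambda .
\]
I then need to check that $q_n\in\mathcal Q_n$. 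Weighted homogeneity of $q_n$ is immediate from the formula. For polynomiality, I would expand $f$ in a power series on a small polydisc around $0$; since $0\in\mathcal P$, this is available. Collecting the monomials $a^js^kp^\ell$ with $j+k+2\ell=n$ gives a polynomial that coincides with $q_n$ near $0$. Because both are holomorphic on $\mathcal P$ and $\mathcal P$ is connected, they agree everywhere on $\mathcal P$.

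The series $\sum_n q_n(z)$ is the Taylor series of $g(\lambda)=f(\delta_\lambda z)$ evaluated at $\lambda=1$. For a compact $K\Subset\mathcal P$, choose $R>1$ with $\delta_R(K)\Subset\mathcal P$ as in Lemma \ref{lemma1}. The Cauchy estimates applied on $|\lambda|=R$ then give $|q_n|\le R^{-n}\sup_{\delta_R(K)}|f|$ on $K$, so the series converges locally uniformly to $f$.

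To obtain $\sum_n\|q_n\|^2<\infty$: for each fixed $r$, the series $\sum_n r^nq_n$ converges uniformly on $\overline{\mathcal P}$, because $\delta_r(\overline{\mathcal P})$ is a compact subset of $\mathcal P$. Orthogonality then gives
\[
\sum_n r^{2n}\|q_n\|^2=\int_{b\mathcal P}|f\circ\delta_r|^2\,d\mu\le\|f\|^2_{\mathrm{rad}} ,
\]
and letting $r\to1$ yields $\sum_n\|q_n\|^2\le\|f\|^2_{\mathrm{rad}}<\infty$. Therefore $F=\sum_n q_n$ converges in $H^2(\mathcal P)$, and the first part shows $TF=f$, which gives surjectivity. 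Uniqueness of the expansion follows from the identity $\sum_n\lambda^nq_n(z)=f(\delta_\lambda z)$ together with uniqueness of power series coefficients in $\lambda$.

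The main obstacle is making sure the domain-dependent facts hold as used. These are: $\delta_\lambda(\mathcal P)\subseteq\mathcal P$ for $|\lambda|\le1$; the rescaling $\delta_R(K)\Subset\mathcal P$ for some $R>1$; and the identification of the Cauchy-integral coefficients with honest polynomials. I would take the first two from the $(1,1,2)$-quasi-balanced property and $\mathcal P=\bigcup_r\mathcal P_r$ from \cite{AglerLykovaYoung2015}, and handle the third by the connectedness argument above.
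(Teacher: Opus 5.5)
Your proposal is correct and follows essentially the same route as the paper. The forward direction uses orthogonality of the spaces $\mathcal Q_n$ plus monotone convergence. The converse extracts the homogeneous components of $\lambda\mapsto f(\lambda a,\lambda s,\lambda^2p)$ by a Cauchy integral, uses the Parseval-type identity $\int_{b\mathcal P}|f\circ\delta_r|^2\,d\mu=\sum_n r^{2n}\|q_n\|^2$, and identifies the series with $f$ by evaluating the $\lambda$-Taylor series at $\lambda=1$. Your identity-theorem argument for polynomiality and your uniqueness argument are, if anything, more careful than the paper's.

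One small fix is needed in your Cauchy estimate. The supremum should be taken over the rotation orbit $\bigcup_{\omega\in\mathbb T}q_\omega(\delta_R(K))$, which is still a compact subset of $\mathcal P$, rather than over $\delta_R(K)$ itself, because $\delta_\lambda z$ with $|\lambda|=R$ is a rotation of $\delta_R z$. Equivalently, replace $K$ by its $\mathbb T$-orbit at the outset.
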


\begin{proof}
We first consider $f\in H^2(\mathcal P)$ and write $f=\sum_{n=0}^{\infty}q_n$
as the weighted homogeneous decomposition with each $q_n\in\mathcal Q_n$. By Theorem
\ref{thm:hol-realization}, the series converges locally uniformly on
$\mathcal P$ to the holomorphic realization of $f$. Since
$\delta_r(\overline{\mathcal P})\Subset\mathcal P$ for $0<r<1$, it follows that 
$
    f\circ\delta_r
    =
    \sum_{n=0}^{\infty}r^nq_n
$
pointwise on $b\mathcal P$. Moreover, the series on the right converges in
$L^2(b\mathcal P,\mu)$. Indeed,
\[
    \sum_{n=0}^{\infty}
    \|r^nq_n\|_{L^2(b\mathcal P,\mu)}^2
    =
    \sum_{n=0}^{\infty}
    r^{2n}\|q_n\|_{L^2(b\mathcal P,\mu)}^2
    \leq
    \sum_{n=0}^{\infty}
    \|q_n\|_{L^2(b\mathcal P,\mu)}^2
    =
    \|f\|_{H^2(\mathcal P)}^2.
\]
Since the spaces $\mathcal Q_n$ are mutually orthogonal in
$L^2(b\mathcal P,\mu)$, it follows that
\[
\begin{aligned}
    \int_{b\mathcal P}|f\circ\delta_r|^2\,d\mu
    =
    \left\|
        \sum_{n=0}^{\infty}r^nq_n
    \right\|_{L^2(b\mathcal P,\mu)}^2
    =
    \sum_{n=0}^{\infty}
    r^{2n}\|q_n\|_{L^2(b\mathcal P,\mu)}^2.
\end{aligned}
\]
Since the terms on the right are nonnegative and increase with $r$, we have by monotone convergence theorem that
\[
\begin{aligned}
    \sup_{0<r<1}
    \int_{b\mathcal P}|f\circ\delta_r|^2\,d\mu
    =
    \lim_{r\uparrow1}
    \sum_{n=0}^{\infty}
    r^{2n}\|q_n\|_{L^2(b\mathcal P,\mu)}^2
    =
    \sum_{n=0}^{\infty}
    \|q_n\|_{L^2(b\mathcal P,\mu)}^2
    =
    \|f\|_{H^2(\mathcal P)}^2.
\end{aligned}
\]
Thus $f\in H^2_{\mathrm{rad}}(\mathcal P)$ and the holomorphic
realization preserves the norm. Conversely, let
$f\in H^2_{\mathrm{rad}}(\mathcal P)$. For $z=(a,s,p)\in\mathcal P$,
consider the holomorphic function $\lambda\mapsto
    f(\lambda a,\lambda s,\lambda^2p)$ for  $\lambda\in\mathbb D$. The $(1,1,2)$-quasi-balanced property of $\mathcal P$ guarantees that the above function is well-defined. Hence this function admits a Taylor expansion
\[
    f(\lambda a,\lambda s,\lambda^2p)
    =
    \sum_{n=0}^{\infty}\lambda^nq_n(a,s,p),
    \qquad \lambda\in\mathbb D,
\]
and by Cauchy's intergal formula,
\[
    q_n(a,s,p)
    =
    \int_{\mathbb T}
    f(\omega a,\omega s,\omega^2p)
    \overline{\omega}^{\,n}\,dm_1(\omega),
\]
where the integral may be taken over any circle
$|\lambda|=\rho<1$ after the corresponding rescaling. The Taylor
expansion of $f$ at the origin shows that $q_n$ is a polynomial
consisting precisely of those monomials $a^js^kp^\ell$ satisfying
$j+k+2\ell=n$. Consequently,
$q_n\in\mathcal Q_n$. We next establish the required square-summability of these
coefficients. Fix $0<r<1$ and define $F_r(a,s,p)=
    f(ra,rs,r^2p)$ for $(a,s,p)\in b\mathcal P$. By the preceding expansion,
\[
    F_r(a,s,p)
    =
    \sum_{n=0}^{\infty}
    r^nq_n(a,s,p).
\]
We claim that the functions
$
    \omega\mapsto
    F_r(\omega a,\omega s,\omega^2p)$ have Fourier coefficients $r^nq_n(a,s,p)$. Indeed, using the weighted
homogeneity of $q_n$,
\[
\begin{aligned}
    F_r(\omega a,\omega s,\omega^2p)
    =
    f(r\omega a,r\omega s,r^2\omega^2p)
    =
    \sum_{n=0}^{\infty}
    r^n\omega^nq_n(a,s,p).
\end{aligned}
\]
Since $\mu$ is invariant under the $(1,1,2)$-quasi-circular action, we have by the
Parseval's identity that
\[
 \sup_{0<r<1}
    \sum_{n=0}^{\infty}
    r^{2n}\|q_n\|_{L^2(b\mathcal P,\mu)}^2
    =\sup_{0<r<1}\int_{b\mathcal P}
    |f\circ\delta_r|^2\,d\mu
   =    \|f\|_{H^2_{\mathrm{rad}}(\mathcal{P})}
    <\infty.
\]
Letting $r\uparrow1$ and applying monotone convergence yields
\[
    \sum_{n=0}^{\infty}
    \|q_n\|_{L^2(b\mathcal P,\mu)}^2
    <\infty.
\]
Therefore
$
  z \mapsto  \sum_{n=0}^{\infty}q_n(z)
$ defines a function in $H^2(\mathcal P)$. By Theorem
\ref{thm:hol-realization}, its holomorphic realization is given by a locally uniformly convergent series on $\mathcal P$. It remains to identify this holomorphic function with the original
$f$. For every $z=(a, s, p) \in\mathcal P$, the one-variable Taylor expansion
above gives
\[
    f(\lambda a,\lambda s,\lambda^2p)
    =
    \sum_{n=0}^{\infty}\lambda^nq_n(a,s,p),
    \qquad |\lambda|<1.
\]
Since $z=(a, s, p) \in \mathcal P$, there exists $\rho>1$ sufficiently close to $1$ such that
$(\lambda a, \lambda s, \lambda^2 p) \in\mathcal P$ for $|\lambda|<\rho$. Thus
\[
    f(a,s,p)
    =
    \sum_{n=0}^{\infty}q_n(a,s,p).
\]
Hence the holomorphic realization of the element
$\sum_{n\geq0}q_n\in H^2(\mathcal P)$ coincides with the original
function $f$ on $\mathcal P$. The proof is now complete.
\end{proof}

As immediate consequences of Theorem \ref{thm:radial-realization}, we have the following corollaries.

\begin{corollary}[Radial approximation]\label{lem_rad}
For \(f\in H^2(\mathcal{P})\), let \(f_r=f\circ\delta_r\). Then
\[
\lim_{r \to 1}\|f_r-f\|_{H^2(\mathcal{P})}=0.
\]
\end{corollary}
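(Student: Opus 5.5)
The plan is to work entirely inside the weighted homogeneous decomposition \eqref{orth} and Theorem \ref{thm:radial-realization}. Let $f\in H^2(\mathcal P)$ have decomposition $f=\sum_{n\ge0}q_n$ with $q_n\in\mathcal Q_n$, and fix $0<r<1$.

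First we identify $f_r=f\circ\delta_r$ as an element of $H^2(\mathcal P)$ and compute its homogeneous components. Since $\delta_r(\overline{\mathcal P})\Subset\mathcal P$, Theorem \ref{thm:hol-realization} gives uniform convergence $f_r=\sum_n r^nq_n$ on $\overline{\mathcal P}$, in particular on $b\mathcal P$. On the other hand, $\sum_n\|r^nq_n\|^2\le\|f\|^2<\infty$, so $\sum_n r^nq_n$ converges in $L^2(b\mathcal P,\mu)$. Its limit is therefore an element of $H^2(\mathcal P)$, being a limit of polynomials. Uniform convergence on $b\mathcal P$ forces this $L^2$-limit to agree $\mu$-a.e. with the pointwise limit $f_r|_{b\mathcal P}$. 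Hence $f_r\in H^2(\mathcal P)$ and its component in $\mathcal Q_n$ is $r^nq_n$. Via the holomorphic realization, this element coincides with the holomorphic function $f\circ\delta_r$, so there is no ambiguity in the meaning of $f_r$.

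Next, by orthogonality of the $\mathcal Q_n$,
\[
\|f_r-f\|_{H^2(\mathcal P)}^2=\sum_{n=0}^\infty (1-r^n)^2\|q_n\|_{L^2(b\mathcal P,\mu)}^2 .
\]
Each term tends to $0$ as $r\to1$ and is dominated by $\|q_n\|^2$. Since $\sum_n\|q_n\|^2=\|f\|^2<\infty$, dominated convergence for series gives the limit $0$. Alternatively, one can split the sum at a large index $N$: the tail is uniformly small, and the finitely many remaining terms tend to $0$.

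The only point needing care is the first step, namely that the element $f_r$ of $H^2(\mathcal P)$ (an $L^2$ class on $b\mathcal P$) really has components $r^nq_n$. This is handled by the uniform convergence on $\overline{\mathcal P}$ from Theorem \ref{thm:hol-realization} together with the $L^2$-convergence bound already noted in the proof of Theorem \ref{thm:radial-realization}. Everything after that is routine.
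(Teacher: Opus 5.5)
Your proof is correct and follows the paper's argument: you expand $f_r-f$ in the weighted homogeneous decomposition to get $\sum_n (1-r^n)^2\|q_n\|^2_{L^2(b\mathcal P,\mu)}$ and then apply dominated convergence. Your first step, identifying the $\mathcal Q_n$-components of $f_r$ as $r^nq_n$ via uniform convergence on the compact set $\delta_r(\overline{\mathcal P})\subset\mathcal P$, makes explicit what the paper takes for granted from the proof of Theorem \ref{thm:radial-realization}.
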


\begin{proof}
Let \(f=\sum_{n\geq0}q_n\) be the weighted homogeneous decomposition of $f$ in $H^2(\mathcal P)$. Then
$
\|f_r-f\|_{H^2(\mathcal{P})}^2
=
\sum_{n=0}^{\infty}|r^n-1|^2\|q_n\|_{L^2(b\mathcal{P})}^2$,
which converges to $0$ by the  dominated convergence theorem.
\end{proof}

We next establish an identification of $H^2(\mathcal P)$ with a closed subspace of $L^2(b\mathcal P,\mu)$ via radial boundary values. In the present setting, this is an analogue of the classical realization of $H^2(\mathbb D)$ as a subspace of $L^2(\mathbb T)$. 

\begin{corollary}[Radial boundary values]\label{radial}
For every \(f\in H^2(\mathcal{P})\), there exists a unique \(f^*\in L^2(b\mathcal{P},\mu)\) such that
\[
\lim_{r\uparrow1}
\|f\circ\delta_r-f^*\|_{L^2(b\mathcal{P}, \mu)}=0.
\]
{In particular, $\|f\|_{H^2(\mathcal P)}=\|f^*\|_{L^2(b \mathcal P, \mu)}$.} 
\end{corollary}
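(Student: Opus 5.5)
The candidate for $f^*$ is the element $f$ itself, viewed in $L^2(b\mathcal P,\mu)$. By definition $H^2(\mathcal P)$ is a closed subspace of $L^2(b\mathcal P,\mu)$, so $f$ already has a boundary representative, namely the $L^2$-limit of the partial sums $\sum_{n\le N} q_n$ of its weighted homogeneous decomposition. We must then check that the functions $f\circ\delta_r$, which are defined pointwise on $b\mathcal P$ through the holomorphic realization of Theorem~\ref{thm:hol-realization}, converge to this element in $L^2(b\mathcal P,\mu)$.

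First, fix $0<r<1$. The proof of Theorem~\ref{thm:radial-realization} shows that, pointwise on $b\mathcal P$,
\[
f\circ\delta_r=\sum_{n=0}^{\infty} r^n q_n ,
\]
and that this series also converges in $L^2(b\mathcal P,\mu)$. The pointwise limit and the $L^2$-limit agree $\mu$-a.e., since an $L^2$-convergent sequence has an a.e.\ convergent subsequence. Hence $f\circ\delta_r$, as an element of $L^2(b\mathcal P,\mu)$, is exactly the element $f_r=\sum_n r^nq_n$ of $H^2(\mathcal P)$.

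Second, Corollary~\ref{lem_rad} gives $\|f_r-f\|_{H^2(\mathcal P)}\to 0$. The $H^2(\mathcal P)$-norm is the $L^2(b\mathcal P,\mu)$-norm, so $f\circ\delta_r\to f$ in $L^2(b\mathcal P,\mu)$. We set $f^*=f$, meaning the $L^2$-element $\sum_n q_n$.

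Uniqueness holds because limits in the normed space $L^2(b\mathcal P,\mu)$ are unique. The norm identity follows from continuity of the norm:
\[
\|f^*\|_{L^2}=\lim_{r\uparrow1}\|f\circ\delta_r\|_{L^2}=\Bigl(\sum_n\|q_n\|^2\Bigr)^{1/2}=\|f\|_{H^2(\mathcal P)},
\]
which also agrees with Theorem~\ref{thm:radial-realization}.

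The only delicate point is the identification in the first step. It requires that the pointwise values of the holomorphic realization on $\delta_r(b\mathcal P)$ match the $L^2$-sum. This needs uniform convergence of $\sum r^nq_n$ on the compact set $\delta_r(\overline{\mathcal P})\subset\mathcal P$, which Lemma~\ref{lemma1} provides. Once that is in place, everything else is formal.
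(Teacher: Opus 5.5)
Your proposal is correct and is essentially the paper's argument: the paper also takes $f^*$ to be the $L^2(b\mathcal P,\mu)$-limit of the partial sums $\sum_{n\le N}q_n$ (which is just $f$ itself, as you note) and uses $f\circ\delta_r=\sum_n r^nq_n$ in $L^2(b\mathcal P,\mu)$. The only difference is that you cite Corollary~\ref{lem_rad} for the convergence, whereas the paper redoes that computation, $\|f\circ\delta_r-f^*\|^2=\sum_n|r^n-1|^2\|q_n\|^2\to 0$, directly; your explicit identification of the pointwise values of $f\circ\delta_r$ with the $L^2$-sum fills in a step the paper leaves implicit.
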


\begin{proof}
Let $f=\sum_{n=0}^{\infty}q_n$ be the weighted homogeneous decomposition of $f$ with each $q_n \in \mathcal{Q}_n$. Each $q_n \in L^2(b \mathcal P, \mu)$ and we have by
Theorem \ref{thm:radial-realization} that
\[
    \sum_{n=0}^{\infty}
    \|q_n\|_{L^2(b\mathcal P,\mu)}^2
    =
    \|f\|_{H^2(\mathcal P)}^2<\infty.
\]
Since the spaces \(\mathcal Q_n\) are mutually orthogonal, the partial
sums
$
 s_N=   \sum_{n=0}^N q_n
$
form a Cauchy sequence in \(L^2(b\mathcal P,\mu)\). Hence there exists a
unique \(f^*\in L^2(b\mathcal P,\mu)\) such that $\{s_N\}$ converges to $f^*$ in $L^2(b\mathcal P,\mu)$. Moreover, for \(0<r<1\),
\[
    f\circ\delta_r
    =
    \sum_{n=0}^{\infty}r^nq_n
\]
in \(L^2(b\mathcal P,\mu)\). Therefore,
\[
\begin{aligned}
    \|f\circ\delta_r-f^*\|_{L^2(b\mathcal P,\mu)}^2
    &=
    \sum_{n=0}^{\infty}
    |r^n-1|^2
    \|q_n\|_{L^2(b\mathcal P,\mu)}^2,
\end{aligned}
\]
which converges to $0$ by the dominated convergence theorem. 
Note that 
\begin{align*}
    \|f\|^2_{H^2(\mathcal P)}=\underset{0<r<1}{\sup}\int_{b\mathcal P}|f \circ \delta_r|^2 d\mu=\underset{0<r<1}{\sup}\left[\sum_{n\geq0}r^{2n}\|q_n\|_{L^2(b\mathcal{P})}^2\right]
=\sum_{n\geq0}\|q_n\|_{L^2(b\mathcal{P})}^2
\end{align*}
and 
\begin{align*}
    \|f^*\|^2_{L^2(b \mathcal P, \mu)}=\underset{r \to 1}{\lim}\|f\circ \delta_r\|^2_{L^2(b \mathcal P, \mu)}=\underset{r \to 1}{\lim}\int_{b \mathcal P}|f \circ \delta_r|^2 d\mu
    &=\underset{r \to 1}{\lim}\left[\sum_{n\geq0}r^{2n}\|q_n\|_{L^2(b\mathcal{P})}^2\right]\\
&=\sum_{n\geq0}\|q_n\|_{L^2(b\mathcal{P})}^2.
\end{align*}
Hence, $\|f\|_{H^2(\mathcal P)}=\|f^*\|_{L^2(b \mathcal P, \mu)}$, which completes the proof.
\end{proof}

\section{A uniqueness set of $\mathcal P$ and the multiplier algebra of $H^2(\mathcal P)$}\label{sec_03}

\noindent
The holomorphic map
\[
\Psi(\xi,\lambda_1,\lambda_2)
=
\left(
\frac{\xi(\lambda_1-\lambda_2)}{2},
\lambda_1+\lambda_2,
\lambda_1\lambda_2
\right)
\]
has played a central role in our construction of the Hardy space
$H^2(\mathcal P)$.  In particular, the identity
\[
\Psi(\mathbb T^3)=b\mathcal P
\]
provides the distinguished boundary parametrization through which the
boundary measure on $\mathcal P$ and, consequently, the Hardy space
$H^2(\mathcal P)$ are defined.  The same parametrization also encodes
geometric information about the pentablock. In this section, we prove that $\overline{\mathcal P}$ is the polynomial convex hull of  $\Psi(\overline{\mathbb D}^{\,3})$ and thus $\mathcal P$ can be described as the interior of the polynomially convex
hull of $\Psi(\overline{\mathbb D}^{\,3})$. Also, we capitalize on these geometric and boundary realizations of
$\mathcal P$ to obtain a corresponding realization of $H^2(\mathcal P)$
inside the classical Hardy space $H^2(\mathbb D^3)$.  Composition with
$\Psi$ yields an isometric embedding, which allows us to study the
reproducing kernel and the evaluation functionals of $H^2(\mathcal P)$
within a familiar Hilbert-space setting.  This, in turn, leads naturally
to the operator-theoretic description of the bounded holomorphic
functions on $\mathcal P$. Our main result identifies the multiplier algebra of $H^2(\mathcal P)$
with $H^\infty(\mathcal P)$ isometrically, that is,
\[
\operatorname{Mult}(H^2(\mathcal P))
=
H^\infty(\mathcal P).
\]
Thus, boundedness as a holomorphic function on $\mathcal P$ is precisely
equivalent to boundedness of the corresponding multiplication operator
on $H^2(\mathcal P)$.  We finally relate this identification to the
boundary realization, showing that the boundary-value map embeds
$H^\infty(\mathcal P)$ isometrically into
$L^\infty(b\mathcal P,\mu)$. We begin with the following geometric properties of the map $\Psi$. Prior to this, we recall that a compact set $K \subseteq \mathbb C^n$ is said to be polynomially convex if it coincides with its polynomial convex hull $\widehat{K}$, where
\[
\widehat{K}=\left\{z \in \mathbb C^n: |p(z)| \leq \|p\|_{\infty, K} \ \text{for every polynomial $p$ in $n$-variables}\right\}.
\]
The authors of \cite{AglerLykovaYoung2015} proved that $\overline{\mathcal P}$ is polynomially convex. Our next result describes the domain $\mathcal P$ in terms of the polynomial convex hull of $\Psi(\overline{\mathbb D}^3)$.

\begin{proposition}\label{prop:Psi-interior}
The map \(\Psi\) satisfies
\[
\Psi(\mathbb{T}^3)=b\mathcal{P},
\qquad
\Psi(\overline{\mathbb{D}}\times\mathbb{D}^2)\subsetneq\mathcal{P},
\qquad
\Psi(\overline{\mathbb{D}}^3)\subsetneq\overline{\mathcal{P}}.
\]
In particular, \(\Psi(\mathbb{D}^3)\subsetneq\mathcal{P}\). Moreover, $\Psi(\mathbb D^3)$ is not an open subset of $\mathbb C^3$. Also, 
\[
\widehat{\Psi(\overline{\mathbb{D}}^3)}=\overline{\mathcal{P}} \quad \text{and} \quad \text{int}\left(\widehat{\Psi(\overline{\mathbb{D}}^3)}\right)=\mathcal{P},
\]
where $\widehat{K}$ and $\text{int}(K)$ denote the polynomial convex hull and interior of a set $K$, respectively.
\end{proposition}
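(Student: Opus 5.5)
The plan is to realize points of $\Psi(\overline{\mathbb D}\times\mathbb D^2)$ as images under $\pi(A)=(a_{21},\operatorname{tr}A,\det A)$ of explicit matrices of norm less than $1$, and then to use the $a$-fibre structure of $\mathcal P$ from \cite{AglerLykovaYoung2015}. Fix $(\xi,\lambda_1,\lambda_2)$, and put $s=\lambda_1+\lambda_2$ and $d=\lambda_1-\lambda_2$.

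\emph{Step 1 (unimodular $\xi$).} For $|\xi|=1$ let
\[
U_\xi=\begin{pmatrix}0&\bar\xi\\ \xi&0\end{pmatrix},\qquad A=\tfrac{s}{2}I+\tfrac d2 U_\xi .
\]
Since $U_\xi$ is a self-adjoint unitary with eigenvalues $\pm1$, the matrix $A$ is normal with eigenvalues $\tfrac{s\pm d}{2}$, which are $\lambda_1$ and $\lambda_2$. Hence $\|A\|=\max|\lambda_i|$. Moreover $a_{21}=\xi d/2$, $\operatorname{tr}A=s$ and $\det A=\tfrac{s^2-d^2}{4}=\lambda_1\lambda_2$, so $\pi(A)=\Psi(\xi,\lambda_1,\lambda_2)$. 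If $\lambda_1,\lambda_2\in\mathbb D$ this point lies in $\mathcal P$. If $\lambda_1,\lambda_2\in\overline{\mathbb D}$ it lies in $\overline{\mathcal P}$, by replacing $A$ with $tA$ and letting $t\uparrow1$.

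\emph{Step 2 ($|\xi|<1$).} Here I would invoke the Agler--Lykova--Young description of $\mathcal P$: for fixed $(s,p)\in\mathbb G_2$ the fibre $\{a:(a,s,p)\in\mathcal P\}$ is an open disc centred at $0$, and the analogous statement holds for $\overline{\mathcal P}$ with closed discs over $\Gamma_2$. It follows that $(\zeta a,s,p)\in\mathcal P$ whenever $(a,s,p)\in\mathcal P$ and $|\zeta|\le1$. Combined with Step 1, this gives $\Psi(\overline{\mathbb D}\times\mathbb D^2)\subseteq\mathcal P$ and $\Psi(\overline{\mathbb D}^3)\subseteq\overline{\mathcal P}$. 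The identity $\Psi(\mathbb T^3)=b\mathcal P$ was already noted in Section \ref{sec_02}.

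\emph{Step 3 (strictness and non-openness).} Take $0<|a|<1$. Then $(a,0,0)=\pi\!\left(\begin{pmatrix}0&0\\ a&0\end{pmatrix}\right)\in\mathcal P$. On the other hand, $\Psi(\xi,\lambda_1,\lambda_2)=(a,0,0)$ would force $\lambda_1+\lambda_2=0$ and $\lambda_1\lambda_2=0$, hence $\lambda_1=\lambda_2=0$ and $a=0$, a contradiction. So none of these points lies in $\Psi(\overline{\mathbb D}^3)$, which gives all the strict inclusions. The same argument shows that $(0,0,0)\in\Psi(\mathbb D^3)$ while no point $(\varepsilon,0,0)$ with $\varepsilon\ne0$ lies in $\Psi(\mathbb D^3)$. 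Therefore $\Psi(\mathbb D^3)$ is not open.

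\emph{Step 4 (hull and interior).} Since $\Psi(\overline{\mathbb D}^3)\subseteq\overline{\mathcal P}$ and $\overline{\mathcal P}$ is polynomially convex \cite{AglerLykovaYoung2015}, we get $\widehat{\Psi(\overline{\mathbb D}^3)}\subseteq\overline{\mathcal P}$. Conversely, $b\mathcal P=\Psi(\mathbb T^3)\subseteq\Psi(\overline{\mathbb D}^3)$, and $b\mathcal P$ is the Shilov boundary of $\overline{\mathcal P}$. Hence for every polynomial $q$,
\[
\|q\|_{\infty,\overline{\mathcal P}}=\|q\|_{\infty,b\mathcal P}\le\|q\|_{\infty,\Psi(\overline{\mathbb D}^3)},
\]
which gives $\overline{\mathcal P}\subseteq\widehat{\Psi(\overline{\mathbb D}^3)}$.

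It remains to show $\operatorname{int}\overline{\mathcal P}=\mathcal P$, and I expect this to be the main obstacle, since it is not automatic for a non-convex domain. I would try the quasi-balanced structure first. Since $\mathcal P=\bigcup_{r<1}\delta_r(\overline{\mathcal P})$ and $\delta_r(\overline{\mathcal P})\subseteq\mathcal P$, take $z\in\operatorname{int}\overline{\mathcal P}$. For $R>1$ close to $1$, the point $\delta_R(z)$ still lies in $\overline{\mathcal P}$ by openness, so $z=\delta_{1/R}(\delta_R(z))\in\mathcal P$. Alternatively, one can read this off from the explicit inequalities describing $\mathcal P$ and $\overline{\mathcal P}$ in \cite{AglerLykovaYoung2015}.
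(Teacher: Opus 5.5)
Your proposal is correct. It differs from the paper mainly in how you obtain $\Psi(\overline{\mathbb D}\times\mathbb D^2)\subseteq\mathcal P$.

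\textbf{The inclusion $\Psi(\overline{\mathbb D}\times\mathbb D^2)\subseteq\mathcal P$.} The paper quotes the explicit Agler--Lykova--Young membership test $|a|<\frac12|1-\bar\lambda_1\lambda_2|+\frac12\sqrt{(1-|\lambda_1|^2)(1-|\lambda_2|^2)}$. It then checks this in one line via $2|a|\le|\lambda_1-\lambda_2|<|1-\bar\lambda_1\lambda_2|$. You instead realize $\Psi(\xi,\lambda_1,\lambda_2)$ for $|\xi|=1$ directly from the definition of $\mathcal P$, as $\pi(A)$ for a normal matrix with spectrum $\{\lambda_1,\lambda_2\}$. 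For $|\xi|<1$ you then need only the qualitative fact that the fibres of $\mathcal P$ over $\mathbb G_2$ are discs centred at $0$, not their radii. Both routes rest on the same Agler--Lykova--Young fibre description. The paper's route is shorter; yours makes transparent why these points lie in $\mathcal P$.

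For the closed inclusion you can avoid the closed-disc description of $\overline{\mathcal P}$ and use continuity, as the paper does: $\Psi(\xi,\lambda_1,\lambda_2)=\lim_{t\uparrow1}\Psi(\xi,t\lambda_1,t\lambda_2)$.

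\textbf{The remaining parts.} Your strictness and hull arguments coincide with the paper's. For non-openness you actually verify that $(\varepsilon,0,0)\notin\Psi(\mathbb D^3)$ for $\varepsilon\neq0$. This is the $\lambda=0$ case of the paper's remark about the points $(0,2\lambda,\lambda^2)$, which the paper asserts without proof.

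The paper also ends with ``the desired conclusion now follows'' and never justifies $\operatorname{int}\overline{\mathcal P}=\mathcal P$. Your dilation argument fills this in correctly. The fact it relies on, $\delta_r(\overline{\mathcal P})\subseteq\mathcal P$ for $0<r<1$, follows at once from $\delta_r(\pi(A))=\pi(rA)$ together with $\overline{\mathcal P}=\pi(\{\|A\|\le1\})$, which holds because the closed matrix ball is compact.
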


\begin{proof}
The identity \(\Psi(\mathbb{T}^3)=b\mathcal{P}\) was proved in Section \ref{sec_02}. Let \((\xi,\lambda_1,\lambda_2)\in\overline{\mathbb{D}}\times\mathbb{D}^2\) and write $(a,s,p)=\Psi(\xi,\lambda_1,\lambda_2)$. Then \((s,p)=(\lambda_1+\lambda_2,\lambda_1\lambda_2)\in\mathbb{G}_2\). The standard membership criterion for $\mathcal P$ from \cite{AglerLykovaYoung2015} reduces the assertion to
\[
|a|
<
\frac12|1-\overline{\lambda}_1\lambda_2|
+
\frac12\sqrt{(1-|\lambda_1|^2)(1-|\lambda_2|^2)}.
\]
Since \(|\xi|\leq1\), it follows that $
2|a|
\leq
|\lambda_1-\lambda_2|
<
|1-\overline{\lambda}_1\lambda_2|$, where the strict inequality is the usual pseudohyperbolic inequality for \(\lambda_1,\lambda_2\in\mathbb{D}\). Hence \(\Psi(\overline{\mathbb{D}}\times\mathbb{D}^2)\subseteq\mathcal{P}\). The inclusion
\(\Psi(\overline{\mathbb{D}}^3)\subseteq\overline{\mathcal{P}}\) follows by continuity. The points of the form $(a,0,0)$, where $0<|a|<1$, belong to $\mathcal P$ but do not belong to $\Psi(\mathbb D^3)$. Moreover, points of the form $(0, 2\lambda, \lambda^2)$ for $|\lambda|<1$ are not interior points of $\Psi(\mathbb D^3)$. Hence, $\Psi(\mathbb D^3)$ is not an open subset of $\mathcal P$. Similarly, for $0<|a|\leq 1$, the points $(a,0,0)$ belong to the closed pentablock $\overline{\mathcal P}$ but do not belong to $\Psi(\overline{\mathbb D}^3)$. Since $\overline{\mathcal P}$ is polynomially convex,
$
\widehat{\Psi(\overline{\mathbb{D}}^3)} \subseteq \widehat{\overline{\mathcal P}}=\overline{\mathcal{P}}.
$
Let $z \in \overline{\mathcal P}$ and let $q \in \mathbb{C}[a, s, p]$. Then
\[
|q(z)| \leq \|q\|_{\infty, \overline{\mathcal P}}=\|q\|_{\infty, b \mathcal P}=\|q\|_{\infty, \Psi(\mathbb T^3)} \leq \|q\|_{\infty, \Psi(\overline{\mathbb D}^3)}.
\]
By definition of the polynomial convex hull, $z \in \widehat{\Psi(\overline{\mathbb{D}}^3)}$ and so, $\overline{\mathcal P} \subseteq \widehat{\Psi(\overline{\mathbb{D}}^3)}$. The desired conclusion now follows. 
\end{proof}

We will use the terminology of \emph{uniqueness sets} in the spirit of \cite{Chee1978}. The following definition is adapted to the class of holomorphic functions on $\mathcal P$.

\begin{definition}
Let $\Omega\subseteq\mathbb C^n$ be a domain and let
$\mathcal F$ be a family of functions on $\Omega$. A set
$E\subseteq\Omega$ is called a \emph{uniqueness set for $\mathcal F$}
if $f\in\mathcal F$ and $f|_E=0$ implies $f\equiv0$ on $\Omega$. 
\end{definition}

\begin{theorem} \label{thm:Psi-uniqueness}
The set $\Psi(\mathbb D^3)$ is a uniqueness set for
$\operatorname{Hol}(\mathcal P)$.
\end{theorem}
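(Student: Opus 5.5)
Let $f\in\operatorname{Hol}(\mathcal P)$ vanish on $\Psi(\mathbb D^3)$. The plan is to show that $\Psi(\mathbb D^3)$ contains a nonempty open subset of $\mathcal P$. Since $\mathcal P$ is a domain (it is connected, being $(1,1,2)$-quasi-balanced and star-like under the maps $\delta_r$ with respect to the origin), the identity theorem will then give $f\equiv 0$ on $\mathcal P$.

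To produce the open set, I will use the inverse function theorem for $\Psi$ on $\mathbb D^3$. A direct computation of the complex Jacobian gives
\[
J_\Psi(\xi,\lambda_1,\lambda_2)=\det\begin{pmatrix}\frac{\lambda_1-\lambda_2}{2} & \frac{\xi}{2} & -\frac{\xi}{2}\\ 0 & 1 & 1\\ 0 & \lambda_2 & \lambda_1\end{pmatrix}=\frac{(\lambda_1-\lambda_2)^2}{2}.
\]
This is nonzero whenever $\lambda_1\neq\lambda_2$, for example at $(0,\tfrac12,-\tfrac12)\in\mathbb D^3$. At any such point $w_0$ the holomorphic inverse function theorem provides a neighbourhood $U\subseteq\mathbb D^3$ of $w_0$ such that $\Psi(U)$ is an open neighbourhood of $\Psi(w_0)$ in $\mathbb C^3$. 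By Proposition \ref{prop:Psi-interior}, $\Psi(\mathbb D^3)\subseteq\mathcal P$, so $\Psi(U)$ is a nonempty open subset of $\mathcal P$ on which $f$ vanishes.

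An alternative that avoids the inverse function theorem is to use that $f\circ\Psi\equiv0$ on $\mathbb D^3$. Working near a point with $\lambda_1\neq\lambda_2$, one locally solves $(s,p)\mapsto(\lambda_1,\lambda_2)$ holomorphically, which is possible because the discriminant $s^2-4p\neq0$ there, and then sets $\xi=2a/(\lambda_1-\lambda_2)$. This exhibits a local holomorphic right inverse of $\Psi$ and gives the same open set.

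The only delicate point is making sure that the open set found lies in the domain where $f$ is defined and connects to all of $\mathcal P$. The inclusion $\Psi(\mathbb D^3)\subseteq\mathcal P$ settles the first part. Connectedness of $\mathcal P$ settles the second, since every point $z$ is joined to $0$ by the path $r\mapsto\delta_r(z)$, which stays in $\mathcal P$. Given these, the identity theorem finishes the proof.
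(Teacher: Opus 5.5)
Your argument is correct, but it takes a different and more elementary route than the paper.

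\textbf{Your route.} You use that $J_\Psi=(\lambda_1-\lambda_2)^2/2\not\equiv 0$, so $\Psi$ is a local biholomorphism off the discriminant $\{\lambda_1=\lambda_2\}$ and $\Psi(\mathbb D^3)$ has nonempty interior. Combined with $\Psi(\mathbb D^3)\subseteq\mathcal P$ and the connectedness of $\mathcal P$, the identity theorem then finishes. Connectedness is immediate: $r\mapsto \pi(rA)$, $0\le r\le 1$, joins $\pi(A)$ to the origin.

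\textbf{The paper's route.} The paper never looks for an open set. It first extends the vanishing by continuity to $\Psi(\overline{\mathbb D}\times\mathbb D^2)$. It then uses $\Psi(\xi,r\lambda_1,r\lambda_2)=\delta_r\bigl(\Psi(\xi,\lambda_1,\lambda_2)\bigr)$ to see that $f\circ\delta_r$ vanishes on $b\mathcal P=\Psi(\mathbb T^3)$. Since $b\mathcal P$ is the Shilov boundary, $f\circ\delta_r\equiv 0$ on $\overline{\mathcal P}$. The conclusion follows from the exhaustion $\mathcal P=\bigcup_{0<r<1}\delta_r(\overline{\mathcal P})$ of Agler--Lykova--Young.

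\textbf{What each buys.} Your approach needs none of this boundary machinery. It also shows that the theorem is essentially a direct consequence of the identity theorem; the paper's remark that $\Psi(\mathbb D^3)$ is not open is beside the point, since only nonempty interior matters. The paper's approach, however, proves more than it states. It only uses vanishing on the sets $\delta_r(b\mathcal P)=\Psi(\mathbb T\times r\mathbb T^2)$, which have empty interior in $\mathbb C^3$. So it shows, for instance, that $\Psi(\mathbb T\times\mathbb D^2)$, a set of real dimension at most $5$, is already a uniqueness set for $\operatorname{Hol}(\mathcal P)$. An inverse-function-theorem argument cannot reach such thin sets.
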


\begin{proof}
   Let $f \in \text{Hol}(\mathcal P)$, and let $f=0$ on $\Psi(\mathbb D^3)$. It follows from Proposition \ref{prop:Psi-interior} that $\Psi(\overline{\mathbb{D}}\times\mathbb{D}^2)\subseteq\mathcal{P}$. Since $f$ is holomorphic on $\mathcal P$, one can use the continuity of $f$ on $\mathcal P$ to conclude that $f=0$ on $\Psi(\overline{\mathbb D} \times \mathbb D^2)$. For $0<r<1$, define $f_r=f\circ \delta_r$. Clearly, each $f_r$ is a holomorphic function on $\overline{\mathcal P}$. Fix $r \in (0, 1)$. Let $(a, s, p) \in b \mathcal P$. Then there exists $(\xi, \lambda_1, \lambda_2) \in \mathbb T^3$ such that $\Psi(\xi, \lambda
   _1, \lambda_2)=(a, s, p)$. Since $f|_{\Psi(\overline{\mathbb{D}}\times\mathbb{D}^2)}=0$, we have 
   \[
   f_r(a, s, p)=f(ra, rs, r^2p)=f\left(\Psi(\xi, r\lambda_1, r\lambda_2\right))=0
   \]
and so, $f_r|_{b \mathcal P}=0$. Since $b \mathcal P$ is the distinguished boundary of $\overline{\mathcal P}$, it follows that $f_r=0$ on $\overline{\mathcal{P}}$ for every $r \in (0, 1)$. Therefore, $f=0$ on $\mathcal P_r$ for $0<r<1$, where $\mathcal P_r=\{(ra, rs, r^2p): (a, s, p) \in \overline{\mathcal{P}}\}$. The authors of  \cite{AglerLykovaYoung2015} proved that $\mathcal P=\cup_{0<r<1}\mathcal P_r$ and so, $f=0$ on $\mathcal P$. This completes the proof. 
\end{proof}

The preceding result shows that $\Psi(\mathbb D^3)$ has another useful
property. Although $\Psi(\mathbb D^3)$ need not be open in $\mathcal P$, the
values of a holomorphic function on $\mathcal P$ on this set determine the
function on all of $\mathcal P$. More precisely, if
$f,g\in\operatorname{Hol}(\mathcal P)$ and
\[
f=g \quad \text{on } \Psi(\mathbb D^3),
\]
then $f=g$ on $\mathcal P$. This provides a further justification for the role of $\Psi$ in the construction
of $H^2(\mathcal P)$. The same map that parametrizes the distinguished boundary
through $\Psi(\mathbb T^3)=b\mathcal P$ also produces a subset of $\mathcal P$
on which holomorphic functions are uniquely determined. In this sense, the
choice of $\Psi$ is not merely a convenient parametrization for defining the
Hardy space. It captures, in a natural way, both the boundary structure and the holomorphic uniqueness structure of $\mathcal P$. Our next result shows that $H^2(\mathcal P)$ can be embedded isometrically into the space $H^2(\mathbb D^3)$.

\begin{proposition}\label{prop:isometric-embedding}
Define
\[
J:H^2(\mathcal{P})\longrightarrow H^2(\mathbb{D}^3),
\qquad
Jf=f\circ\Psi.
\]
Then \(J\) is an isometric embedding. 
\end{proposition}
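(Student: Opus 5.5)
We first verify the isometry on polynomials and then extend by density, taking care to identify the extension with honest composition by $\Psi$.

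\textbf{Polynomials.} For $q\in\mathbb C[a,s,p]$, the function $q\circ\Psi$ is a polynomial in $(\xi,\lambda_1,\lambda_2)$. It therefore lies in $H^2(\mathbb D^3)$, and its $H^2(\mathbb D^3)$-norm equals the $L^2(\mathbb T^3,m_3)$-norm of its boundary values. Since $\mu=\Psi_*m_3$,
\[
\|q\circ\Psi\|_{H^2(\mathbb D^3)}^2=\int_{\mathbb T^3}|q\circ\Psi|^2\,dm_3=\int_{b\mathcal P}|q|^2\,d\mu=\|q\|_{H^2(\mathcal P)}^2 .
\]
So $J_0:q\mapsto q\circ\Psi$ is a linear isometry from the dense subspace $\mathbb C[a,s,p]$ into $H^2(\mathbb D^3)$. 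It extends uniquely to an isometry $\widetilde J:H^2(\mathcal P)\to H^2(\mathbb D^3)$, whose range is closed because $H^2(\mathcal P)$ is complete.

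\textbf{Identification with composition.} This is the main point: we must show that $\widetilde Jf=f\circ\Psi$ pointwise on $\mathbb D^3$, where $f$ denotes the holomorphic realization of Theorem \ref{thm:hol-realization}. Proposition \ref{prop:Psi-interior} gives $\Psi(\mathbb D^3)\subseteq\mathcal P$, so $f\circ\Psi$ is at least defined and holomorphic on $\mathbb D^3$.

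Take polynomials $q^{(n)}\to f$ in $H^2(\mathcal P)$. Point evaluations on $H^2(\mathbb D^3)$ are bounded, so $q^{(n)}\circ\Psi\to\widetilde Jf$ pointwise on $\mathbb D^3$. By Theorem \ref{thm:hol-realization}, point evaluations on $H^2(\mathcal P)$ are also bounded, and the map $T$ is continuous into the compact-open topology. Hence $q^{(n)}(\Psi(w))\to f(\Psi(w))$ for every $w\in\mathbb D^3$. Therefore $\widetilde Jf(w)=f(\Psi(w))$, that is, $Jf=f\circ\Psi\in H^2(\mathbb D^3)$ with $\|Jf\|=\|f\|$.

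A subtle point is that Theorem \ref{thm:hol-realization} defines $Tf$ through the homogeneous expansion. One might worry about consistency with limits of polynomials, but the continuity of $T$ on $H^2(\mathcal P)$, together with $Tq=q$ for polynomials, settles this.

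\textbf{Injectivity and radial norms.} Injectivity is immediate from the isometry. Alternatively, one can argue directly: if $f\circ\Psi=0$, then Theorem \ref{thm:Psi-uniqueness} gives $f=0$.

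As a consistency check, one can compare radial norms. Since $\Psi(\xi,r\lambda_1,r\lambda_2)=\delta_r\Psi(\xi,\lambda_1,\lambda_2)$, integrating $|f\circ\delta_r|^2\,d\mu$ corresponds to integrating $|(f\circ\Psi)(\xi,r\lambda_1,r\lambda_2)|^2$ over $\mathbb T^3$, and Theorem \ref{thm:radial-realization} then matches the norms. This check is not needed once the density argument is in place.
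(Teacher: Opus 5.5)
Your proof is correct, but it is organized differently from the paper's.

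\textbf{The paper's route.} It never passes through polynomials. For a given $f$ it notes that $(\xi,\lambda_1,\lambda_2)\mapsto (f\circ\Psi)(\xi,r\lambda_1,r\lambda_2)=(f\circ\delta_r)(\Psi(\xi,\lambda_1,\lambda_2))$ is continuous up to $\mathbb T^3$. It then uses the mixed radial formula $\|F\|_{H^2(\mathbb D^3)}^2=\sup_{0<r<1}\int_{\mathbb T^3}|F(\xi,r\lambda_1,r\lambda_2)|^2\,dm_3$, in which only $\lambda_1,\lambda_2$ are dilated and which is justified by Parseval. Each integral is transported to $b\mathcal P$ through $\mu=\Psi_*m_3$. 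Finally Theorem \ref{thm:radial-realization} identifies $\sup_r\int_{b\mathcal P}|f\circ\delta_r|^2\,d\mu$ with $\|f\|_{H^2(\mathcal P)}^2$. This is precisely your closing ``consistency check''.

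\textbf{Your route.} You get the isometry on $\mathbb C[a,s,p]$ from the pushforward identity alone and extend by density. You then identify the abstract extension with honest composition, using bounded point evaluations on both $H^2(\mathbb D^3)$ and $H^2(\mathcal P)$ together with $Tq=q$ for polynomials. You handle this step correctly.

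\textbf{What each buys.} Your argument is more elementary: it needs only Theorem \ref{thm:hol-realization}, not the radial realization theorem or the mixed-radial description of the polydisc norm. It also shows for free that the range of $J$ is the $H^2(\mathbb D^3)$-closure of $\{q\circ\Psi: q\in\mathbb C[a,s,p]\}$. The paper's argument has a different advantage. Its core identity $\|f\circ\Psi\|_{H^2(\mathbb D^3)}=\|f\|_{H^2_{\mathrm{rad}}(\mathcal P)}$ holds directly on $H^2_{\mathrm{rad}}(\mathcal P)$ and uses no density of polynomials. That is why the Appendix can reuse it to prove density of polynomials in $H^2_{\mathrm{rad}}(\mathcal P)$; your density-based argument would be circular there.
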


\begin{proof}
Let \(f\in H^2(\mathcal{P})\) and set \(F=Jf\). By Proposition \ref{prop:Psi-interior}, \(F\in\operatorname{Hol}(\mathbb{D}^3)\). For each fixed \(0<r<1\), the function
$
(\xi,\lambda_1,\lambda_2)
\longmapsto
F(\xi,r\lambda_1,r\lambda_2)
$
extends continuously to \(\mathbb{T}^3\), because
\(\Psi(\overline{\mathbb{D}}\times r\overline{\mathbb{D}}^2)\Subset\mathcal{P}\). We use the following equivalent form of the Hardy norm on \(\mathbb{D}^3\):
\begin{equation}\label{mixed-Hardy-norm}
\|F\|_{H^2(\mathbb{D}^3)}^2
=
\sup_{0<r<1}
\int_{\mathbb{T}^3}
|F(\xi,r\lambda_1,r\lambda_2)|^2\,\mathrm{d}m_3.
\end{equation}
Indeed, if
\[
F(\xi, \lambda_1, \lambda_2)
=
\sum_{m,n,k\geq0}c_{m,n,k}\xi^m \lambda_1^n \lambda_2^k,
\]
then Parseval's identity shows that the integral on the right-hand side of \eqref{mixed-Hardy-norm} equals
\[
\sum_{m,n,k\geq0}|c_{m,n,k}|^2r^{2(n+k)},
\]
and its supremum is \(\sum_{m,n,k\geq0}|c_{m,n,k}|^2\), the usual \(H^2(\mathbb{D}^3)\)-norm squared. Using the definition of \(\mu=\Psi_*m_3\) and Theorem \ref{thm:radial-realization}, we obtain
\[
\begin{aligned}
\|Jf\|_{H^2(\mathbb{D}^3)}^2
=
\sup_{0<r<1}
\int_{\mathbb{T}^3}
|f\circ\Psi(\xi, r\lambda_1, r\lambda_2)|^2\,\mathrm{d}m_3
&=
\sup_{0<r<1}
\int_{b\mathcal{P}}|f\circ\delta_r(a,s,p)|^2\,\mathrm{d}\mu(a,s,p)\\
&=
\|f\|_{H^2(\mathcal{P})}^2.
\end{aligned}
\]
Thus \(J\) is an isometry and its range is closed.
\end{proof}

It was proved in Theorem \ref{thm:hol-realization} that $H^2(\mathcal P)$ is a reproducing kernel space. Since the map $J: H^2(\mathcal P) \to H^2(\mathbb D^3)$ is an isometry, it follows that $J(H^2(\mathcal P))$ itself is a reproducing kernel Hilbert space with the reproducing kernel described below.

\begin{proposition}
Let $J\colon H^2(\mathcal{P}) \to H^2(\mathbb{D}^3)$ be the isometric embedding defined by $J(f) = f \circ \Psi$, and set $\mathcal{M} = J(H^2(\mathcal{P}))$. Then the reproducing kernel of $\mathcal{M}$ is given by
\[
K_{\mathcal{M}}(z,w) = \bigl(JJ^*S_{\mathbb{D}^3}(\cdot,w)\bigr)(z), \qquad z, w\in\mathbb{D}^3,
\]
where $S_{\mathbb{D}^3}$ denotes the Szeg\"{o} kernel of $H^2(\mathbb{D}^3)$.
\end{proposition}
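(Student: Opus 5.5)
The plan is to combine the isometry property of $J$ (Proposition \ref{prop:isometric-embedding}) with the standard fact that the reproducing kernel of a closed subspace of a reproducing kernel Hilbert space is obtained by projecting the ambient kernel onto that subspace.

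First, I will record that $\mathcal M$ is closed in $H^2(\mathbb D^3)$, since $J$ is an isometry. Because $J^*J=I$ on $H^2(\mathcal P)$, the operator $P=JJ^*$ is self-adjoint and idempotent: $(JJ^*)^2=J(J^*J)J^*=JJ^*$. Its range is contained in $\mathcal M$. Conversely, for $g=Jf\in\mathcal M$ we have $JJ^*g=J(J^*J)f=Jf=g$, so $P$ fixes $\mathcal M$. Hence $P$ is exactly the orthogonal projection of $H^2(\mathbb D^3)$ onto $\mathcal M$.

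Second, fix $w\in\mathbb D^3$ and set $k_w=P S_{\mathbb D^3}(\cdot,w)\in\mathcal M$. For any $g\in\mathcal M$, self-adjointness of $P$ and $Pg=g$ give
\[
\langle g,k_w\rangle=\langle Pg,S_{\mathbb D^3}(\cdot,w)\rangle=\langle g,S_{\mathbb D^3}(\cdot,w)\rangle=g(w),
\]
by the reproducing property of the Szeg\"o kernel. So $k_w$ represents evaluation at $w$ on $\mathcal M$. Since such a representing vector is unique, the reproducing kernel of $\mathcal M$ is $K_{\mathcal M}(z,w)=k_w(z)=\bigl(JJ^*S_{\mathbb D^3}(\cdot,w)\bigr)(z)$, which is the claimed formula.

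The only point requiring care is that $\mathcal M$, with the norm inherited from $H^2(\mathbb D^3)$, really is a reproducing kernel Hilbert space of functions on $\mathbb D^3$. This is immediate, because evaluation at $w$ is already bounded on all of $H^2(\mathbb D^3)$. As a consistency check, one can also compute $J^*S_{\mathbb D^3}(\cdot,w)=K_{\mathcal P}(\cdot,\Psi(w))$, where $K_{\mathcal P}$ is the kernel of $H^2(\mathcal P)$ from Theorem \ref{thm:hol-realization}. This identification uses the pairing $\langle f,J^*S(\cdot,w)\rangle=(Jf)(w)=f(\Psi(w))$ together with $\Psi(\mathbb D^3)\subseteq\mathcal P$. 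It yields the alternative expression $K_{\mathcal M}(z,w)=K_{\mathcal P}(\Psi(z),\Psi(w))$, although this is not needed for the statement.
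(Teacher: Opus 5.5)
Your proof is correct and follows the paper's argument: identify $JJ^*$ as the orthogonal projection onto $\mathcal M$, then check that the projected Szeg\"{o} kernel $JJ^*S_{\mathbb D^3}(\cdot,w)$ represents evaluation at $w$ on $\mathcal M$. You also verify that $JJ^*$ is this projection, which the paper only asserts, and your side identification $K_{\mathcal M}(z,w)=K_{\mathcal P}(\Psi(z),\Psi(w))$ is correct.
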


\begin{proof}
Since $J$ is an isometric embedding, it is a unitary operator from $H^2(\mathcal{P})$ onto its range $\mathcal{M}$. The orthogonal projection onto $\mathcal{M}$ is thus $P_{\mathcal{M}} = JJ^*$. Fix $w\in\mathbb{D}^3$. For any $F\in\mathcal{M}$, we have that
\[
F(w) = \left\langle F, S_{\mathbb{D}^3}(\cdot,w) \right\rangle_{H^2(\mathbb{D}^3)} = \left\langle F, P_{\mathcal{M}}S_{\mathbb{D}^3}(\cdot,w) \right\rangle_{H^2(\mathbb{D}^3)}.
\]
Hence $K_{\mathcal{M}}(\cdot,w) = P_{\mathcal{M}}S_{\mathbb{D}^3}(\cdot,w)$ belongs to $\mathcal{M}$ and satisfies the reproducing property, so it is the reproducing kernel of $\mathcal{M}$.
\end{proof}

The reproducing kernel of $H^2(\mathcal P)$ also reflects the boundary
geometry of the pentablock. In view of the orthogonal decomposition
\[
H^2(\mathcal P)=\bigoplus_{n=0}^{\infty}\mathcal Q_n,
\]
the dimensions of the weighted homogeneous components can be used to
estimate the growth of the reproducing kernel under the radial
dilation maps
\[
\delta_r(a,s,p)=(ra,rs,r^2p),\qquad 0<r<1.
\]
We next obtain a quantitative lower bound for the diagonal of the
reproducing kernel along these dilated boundary points. In particular,
this shows that the norms of the point-evaluation functionals become
unbounded as the boundary is approached.

\begin{proposition}\label{prop:kernel-growth}
Let $K_{\mathcal P}$ be the reproducing kernel of $H^2(\mathcal P)$ and let $r \in (0, 1)$. Then
\[
\int_{b\mathcal P}
K_{\mathcal P}(\delta_r z,\delta_r z)\,d\mu(z)
=
\frac{1}{(1-r^2)^3(1+r^2)}
\ \ \text{and} \ \ \sup_{z\in b\mathcal P}
K_{\mathcal P}(\delta_r z,\delta_r z)
\geq
\frac{1}{(1-r^2)^3(1+r^2)}.
\]
In particular, $\displaystyle \sup_{z\in b\mathcal P}K_\mathcal{P}(\delta_r z, \delta_r z)
\to \infty$ as
$r\uparrow1$.
\end{proposition}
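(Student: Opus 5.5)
The plan is to expand the kernel in an orthonormal basis adapted to the decomposition $H^2(\mathcal P)=\bigoplus_n\mathcal Q_n$ and reduce the average to a dimension count. Since $H^2(\mathcal P)$ is a reproducing kernel Hilbert space (Theorem \ref{thm:hol-realization}) and the polynomials are dense, choose for each $n$ an orthonormal basis $\{e_{n,j}\}_{j=1}^{d_n}$ of $\mathcal Q_n$, where $d_n=\dim\mathcal Q_n=\lfloor (n+2)^2/4\rfloor$. Note that this dimension formula presumes the monomials $a^js^kp^\ell$ with $j+k+2\ell=n$ stay linearly independent in $L^2(b\mathcal P,\mu)$; this holds because $\mu$ has full support on $b\mathcal P$ and a polynomial vanishing on $b\mathcal P$ vanishes on $\overline{\mathcal P}$ by the Shilov boundary property. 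Then
\[
K_{\mathcal P}(z,w)=\sum_{n\ge0}\sum_{j=1}^{d_n}e_{n,j}(z)\overline{e_{n,j}(w)},
\]
converging locally uniformly on $\mathcal P\times\mathcal P$.

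For $z\in b\mathcal P$, weighted homogeneity gives $e_{n,j}(\delta_r z)=r^ne_{n,j}(z)$, hence
\[
K_{\mathcal P}(\delta_r z,\delta_r z)=\sum_{n}r^{2n}\sum_j|e_{n,j}(z)|^2 .
\]
The terms are nonnegative, so integrating against $\mu$ and using monotone convergence (Tonelli) gives
\[
\int_{b\mathcal P}K_{\mathcal P}(\delta_r z,\delta_r z)\,d\mu=\sum_n r^{2n}\sum_j\|e_{n,j}\|^2_{L^2(\mu)}=\sum_n d_n r^{2n}.
\]
The remaining step is the generating function $\sum_n\lfloor (n+2)^2/4\rfloor x^n$ with $x=r^2$. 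The cleanest route is to note that $d_n$ counts the triples $(j,k,\ell)$ with $j+k+2\ell=n$. The generating function is therefore
\[
\frac{1}{(1-x)^2(1-x^2)}=\frac{1}{(1-x)^3(1+x)},
\]
which yields the first identity. This combinatorial bookkeeping, together with justifying the interchange of sum and integral, is the only real computation, and it is routine.

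For the second assertion, $\mu$ is a probability measure, so the supremum of the continuous nonnegative function $z\mapsto K_{\mathcal P}(\delta_r z,\delta_r z)$ on $b\mathcal P$ is at least its $\mu$-average. This function is continuous since $\delta_r(b\mathcal P)\Subset\mathcal P$. The blow-up as $r\uparrow1$ then follows because $(1-r^2)^{-3}(1+r^2)^{-1}\to\infty$.
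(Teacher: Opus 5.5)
Your proposal is correct and follows essentially the same route as the paper. You expand $K_{\mathcal P}$ in an orthonormal basis adapted to $\bigoplus_n\mathcal Q_n$, use weighted homogeneity and Tonelli to reduce the $\mu$-average to $\sum_n d_n r^{2n}$, and bound the supremum below by the average. Your additional justifications fill in details the paper only asserts: the linear independence of the monomials in $L^2(b\mathcal P,\mu)$, which makes $d_n=\lfloor (n+2)^2/4\rfloor$ the true dimension, and the generating function $\frac{1}{(1-x)^2(1-x^2)}$ obtained by counting triples $(j,k,\ell)$ with $j+k+2\ell=n$.
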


\begin{proof}
We have by Proposition \ref{prop:decompositions} that $H^2(\mathcal P)=\bigoplus_{n=0}^{\infty}\mathcal Q_n$. Set
$d_n =\dim\mathcal Q_n$. For each $n\geq0$, let
$\{e_{n,k}:1\leq k\leq d_n\}$ be an orthonormal basis for
$\mathcal Q_n$. The reproducing kernel for $\mathcal{Q}_n$ is
\[
K_{\mathcal{Q}_n}(z,w) = \sum_{k=1}^{d_n} e_{n,k}(z)\,\overline{e_{n,k}(w)} \quad \text{and so,} \quad 
K_{\mathcal{P}}(z,w) = \sum_{n=0}^{\infty} \sum_{k=1}^{d_n} e_{n,k}(z)\,\overline{e_{n,k}(w)}.
\] 
Since $e_{n,k}\in\mathcal Q_n$ is weighted homogeneous of degree $n$, we have that $
e_{n,k}(\delta_r z)=r^n e_{n,k}(z)$. Then
\[
\begin{aligned}
K_{\mathcal P}(\delta_r z,\delta_r z)
&=
\sum_{n=0}^{\infty}
\sum_{k=1}^{d_n}
\left|e_{n,k}(\delta_r z)\right|^2
=
\sum_{n=0}^{\infty}
r^{2n}
\sum_{k=1}^{d_n}|e_{n,k}(z)|^2.
\end{aligned}
\]
Integrating over $b\mathcal P$ and using Tonelli's theorem, we have
\[
\begin{aligned}
\int_{b\mathcal P}
K_{\mathcal P}(\delta_r z,\delta_r z)\,d\mu(z)
=
\sum_{n=0}^{\infty}
r^{2n}
\sum_{k=1}^{d_n}
\int_{b\mathcal P}|e_{n,k}(z)|^2\,d\mu(z)
&=
\sum_{n=0}^{\infty}r^{2n}d_n\\
&=\frac{1}{(1-r^2)^3(1+r^2)},
\end{aligned}
\]
where in the last equality we used the fact that
$d_n=\left\lfloor\frac{(n+2)^2}{4}\right\rfloor$. Since $\mu$ is a probability measure, it follows that
\[
\frac{1}{(1-r^2)^3(1+r^2)}=\int_{b\mathcal P}
K_{\mathcal P}(\delta_r z,\delta_r z)\,d\mu(z)
\leq
\sup_{z\in b\mathcal P}
K_{\mathcal P}(\delta_r z,\delta_r z).
\]
This completes the proof.
\end{proof}

Building on the integral representation of the reproducing kernel $K_{\mathcal{P}}$ and the norm estimates for evaluation functionals obtained above, we now introduce the multiplier algebra of $H^2(\mathcal{P})$, which characterizes all bounded multiplication operators on the space.

\begin{definition}
A holomorphic function $\varphi$ on $\mathcal{P}$ is a \textit{multiplier} of $H^2(\mathcal{P})$ if $\varphi f\in H^2(\mathcal{P})$ for all $f\in H^2(\mathcal{P})$. The set of all multipliers is denoted $\operatorname{Mult}(H^2(\mathcal{P}))$, with equivalent characterization
\[
\operatorname{Mult}(H^2(\mathcal{P})) = \{\varphi\in\operatorname{Hol}(\mathcal{P}) : M_\varphi\in\mathcal{B}(H^2(\mathcal{P}))\},
\]
where $M_\varphi$ is the multiplication operator. Equipped with the norm $\|\varphi\|_{\operatorname{Mult}} = \|M_\varphi\|$, $\operatorname{Mult}(H^2(\mathcal{P}))$ is the \emph{multiplier algebra} of $H^2(\mathcal{P})$. Recall $H^\infty(\mathcal{P})$ denotes the space of bounded holomorphic functions on $\mathcal{P}$, equipped with the supremum norm $\|\cdot\|_{\infty,\mathcal{P}}$.
\end{definition}

\begin{theorem}\label{thm:multipliers}
The multiplier algebra of $H^2(\mathcal{P})$ is isometrically isomorphic to $H^\infty(\mathcal{P})$.
\end{theorem}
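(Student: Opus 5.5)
We prove both inclusions and compare norms: every $\varphi\in H^\infty(\mathcal P)$ is a multiplier with $\|M_\varphi\|\le\|\varphi\|_{\infty,\mathcal P}$, and every multiplier is bounded with $\|\varphi\|_{\infty,\mathcal P}\le\|M_\varphi\|$. The first direction uses the radial realization (Theorem \ref{thm:radial-realization}). The second uses the reproducing kernel from Theorem \ref{thm:hol-realization}.

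\emph{Step 1: $H^\infty(\mathcal P)\subseteq \operatorname{Mult}(H^2(\mathcal P))$ contractively.} Let $\varphi\in H^\infty(\mathcal P)$ and $f\in H^2(\mathcal P)$, realized as a holomorphic function on $\mathcal P$. Then $\varphi f\in\operatorname{Hol}(\mathcal P)$. For each $0<r<1$ the set $\delta_r(b\mathcal P)$ lies in $\mathcal P$, so
\[
\int_{b\mathcal P}|(\varphi f)\circ\delta_r|^2\,d\mu\le \|\varphi\|_{\infty,\mathcal P}^2\int_{b\mathcal P}|f\circ\delta_r|^2\,d\mu\le \|\varphi\|_{\infty,\mathcal P}^2\|f\|_{H^2(\mathcal P)}^2 .
\]
Taking the supremum over $r$ shows $\varphi f\in H^2_{\mathrm{rad}}(\mathcal P)$. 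By Theorem \ref{thm:radial-realization}, this space is isometrically $H^2(\mathcal P)$, so $\varphi f\in H^2(\mathcal P)$ and $\|M_\varphi\|\le\|\varphi\|_{\infty,\mathcal P}$. Theorem \ref{thm:radial-realization} also gives uniqueness of the expansion, so the holomorphic product really is the element $\varphi f$ of $H^2(\mathcal P)$.

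\emph{Step 2: multipliers are bounded operators.} Let $\varphi\in\operatorname{Mult}(H^2(\mathcal P))$. We show $M_\varphi$ is closed and then apply the closed graph theorem. Suppose $f_n\to f$ and $\varphi f_n\to g$ in $H^2(\mathcal P)$. Point evaluations are continuous by Theorem \ref{thm:hol-realization}. Hence for every $z\in\mathcal P$,
\[
g(z)=\lim_n\varphi(z)f_n(z)=\varphi(z)f(z),
\]
so $g=\varphi f$. The definition already presents $\operatorname{Mult}$ as the set of $\varphi$ with $M_\varphi$ bounded, but this argument confirms that the two descriptions agree.

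\emph{Step 3: $\|\varphi\|_{\infty,\mathcal P}\le\|M_\varphi\|$.} Let $k_w=K_{\mathcal P}(\cdot,w)$. Then
\[
\langle f,M_\varphi^*k_w\rangle=\langle \varphi f,k_w\rangle=\varphi(w)f(w)=\langle f,\overline{\varphi(w)}k_w\rangle ,
\]
so $M_\varphi^*k_w=\overline{\varphi(w)}k_w$. We need $k_w\ne0$, which holds because $1\in H^2(\mathcal P)$ and $1(w)=1$ gives $K_{\mathcal P}(w,w)\ge |1(w)|^2/\|1\|^2=1>0$. Therefore $|\varphi(w)|\le\|M_\varphi^*\|=\|M_\varphi\|$ for all $w\in\mathcal P$. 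This shows $\varphi\in H^\infty(\mathcal P)$ with $\|\varphi\|_{\infty,\mathcal P}\le\|M_\varphi\|$.

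Combining the three steps, $\varphi\mapsto M_\varphi$ is an isometric algebra isomorphism from $H^\infty(\mathcal P)$ onto $\operatorname{Mult}(H^2(\mathcal P))$; it is multiplicative because $M_{\varphi\psi}=M_\varphi M_\psi$. The step requiring the most care is Step 1, specifically the claim that membership in $H^2(\mathcal P)$ may be tested by the radial norm. That claim is exactly the converse half of Theorem \ref{thm:radial-realization}, so invoking that result carefully settles it.
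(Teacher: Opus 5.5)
Your proposal is correct and follows essentially the same route as the paper. Both arguments obtain $\|M_\varphi\|\le\|\varphi\|_{\infty,\mathcal P}$ from the pointwise bound $|(\varphi f)\circ\delta_r|\le\|\varphi\|_{\infty,\mathcal P}|f\circ\delta_r|$ under the radial norm; the paper writes this on $\mathbb T^3$ via $\Psi_r=\delta_r\circ\Psi$, which is the same thing since $\mu=\Psi_*m_3$. Both get the reverse inequality from the eigenvector identity $M_\varphi^*k_w=\overline{\varphi(w)}k_w$. Your closed-graph step, the check $k_w\neq0$, and the explicit use of the converse half of Theorem~\ref{thm:radial-realization} to place $\varphi f$ in $H^2(\mathcal P)$ are small points of rigor that the paper leaves implicit.
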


\begin{proof}
Define $\Psi_r=\delta_r \circ \Psi$ for $0<r<1$. First suppose $\varphi\in H^\infty(\mathcal{P})$. For any $f\in H^2(\mathcal{P})$,
\begin{align*}
\|\varphi f\|_{H^2(\mathcal{P})}^2
&= \sup_{0<r<1} \int_{\mathbb{T}^3} |\varphi(\Psi_r(\zeta))|^2 |f(\Psi_r(\zeta))|^2\,\mathrm{d}m_3(\zeta) \\
&\le \|\varphi\|_{\infty,\mathcal{P}}^2 \sup_{0<r<1} \int_{\mathbb{T}^3} |f(\Psi_r(\zeta))|^2\,\mathrm{d}m_3(\zeta) \\
&= \|\varphi\|_{\infty,\mathcal{P}}^2 \|f\|_{H^2(\mathcal{P})}^2.
\end{align*}
Thus $M_\varphi$ is bounded with $\|M_\varphi\| \le \|\varphi\|_{\infty,\mathcal{P}}$. Conversely, let $\varphi\in\operatorname{Mult}(H^2(\mathcal{P}))$. For any $\lambda\in\mathcal{P}$, the reproducing property gives $M_\varphi^*k_\lambda = \overline{\varphi(\lambda)}k_\lambda$, so
\[
|\varphi(\lambda)| = \frac{\|M_\varphi^*k_\lambda\|}{\|k_\lambda\|} \le \|M_\varphi\|.
\]
Hence $\varphi\in H^\infty(\mathcal{P})$ with $\|\varphi\|_{\infty,\mathcal{P}} \le \|M_\varphi\|$. Combining the two inequalities yields $\|M_\varphi\| = \|\varphi\|_{\infty,\mathcal{P}}$, completing the proof.
\end{proof}

\begin{corollary}
The map $\pi_\infty \colon H^\infty(\mathcal{P}) \to \mathcal{B}(H^2(\mathcal{P})), \pi_\infty(\varphi) = M_\varphi$, is an isometric algebra isomorphism onto $\operatorname{Mult}(H^2(\mathcal{P}))$. Consequently, $H^\infty(\mathcal{P})$ is a norm-closed unital operator algebra on $H^2(\mathcal{P})$.
\end{corollary}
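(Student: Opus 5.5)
The plan is to read the corollary directly off Theorem \ref{thm:multipliers}, so the work is mostly bookkeeping. Three things need checking: that $\pi_\infty$ is well defined and linear and multiplicative; that it is isometric and onto $\operatorname{Mult}(H^2(\mathcal P))$; and that its image is norm closed in $\mathcal B(H^2(\mathcal P))$.

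\emph{Well defined and an algebra homomorphism.} For $\varphi\in H^\infty(\mathcal P)$, the first half of the proof of Theorem \ref{thm:multipliers} shows that $M_\varphi$ maps $H^2(\mathcal P)$ into itself with $\|M_\varphi\|\le\|\varphi\|_{\infty,\mathcal P}$. Here $H^2(\mathcal P)$ is identified with $H^2_{\mathrm{rad}}(\mathcal P)$ via Theorem \ref{thm:radial-realization}, and $\varphi f$ is holomorphic on $\mathcal P$. So $\pi_\infty(\varphi)\in\mathcal B(H^2(\mathcal P))$. Linearity is clear. Multiplicativity holds because $M_{\varphi\psi}f=\varphi\psi f=M_\varphi M_\psi f$ pointwise on $\mathcal P$, and the holomorphic realization is injective (Theorem \ref{thm:hol-realization}). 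Also $\pi_\infty(1)=I$.

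\emph{Isometry and surjectivity.} Theorem \ref{thm:multipliers} gives $\|M_\varphi\|=\|\varphi\|_{\infty,\mathcal P}$. This yields isometry, and hence injectivity. For surjectivity, take any multiplier $\varphi$. Its second half shows that $\varphi$ is bounded, because $|\varphi(\lambda)|\le\|M_\varphi\|$ via $M_\varphi^*k_\lambda=\overline{\varphi(\lambda)}k_\lambda$. So every element of $\operatorname{Mult}(H^2(\mathcal P))$ is $\pi_\infty(\varphi)$ for some $\varphi\in H^\infty(\mathcal P)$.

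\emph{Norm closedness.} This is the only step needing more than quoting, though it is routine. $H^\infty(\mathcal P)$ is a Banach algebra under the sup norm, since a uniform limit of bounded holomorphic functions is bounded and holomorphic by Weierstrass. An isometric image of a complete space is complete, hence closed in $\mathcal B(H^2(\mathcal P))$. It contains $I=M_1$, so it is a norm-closed unital operator algebra.

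As an alternative to completeness, one can argue with operators directly. Suppose $M_{\varphi_n}\to T$ in norm. Then $\{\varphi_n\}$ is uniformly Cauchy, so it converges uniformly to some $\varphi\in H^\infty(\mathcal P)$, and $T=M_\varphi$ because $\|M_{\varphi_n}-M_\varphi\|=\|\varphi_n-\varphi\|_\infty\to0$.

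I expect no real obstacle. The one point to state explicitly is that multiplication is taken on holomorphic realizations, so that the products are compatible with the $L^2$ boundary picture.
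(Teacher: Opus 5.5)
Your proposal is correct and follows the paper's route. Both arguments read isometry and surjectivity onto $\operatorname{Mult}(H^2(\mathcal P))$ directly off Theorem \ref{thm:multipliers}, and both obtain norm-closedness from the completeness of $H^\infty(\mathcal P)$ carried over by the isometry; your extra checks (multiplicativity via injectivity of the holomorphic realization, and $M_1=I$) only make explicit what the paper's two-line proof leaves implicit.
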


\begin{proof}
By Theorem~\ref{thm:multipliers}, $\pi_\infty$ is an isometric algebra isomorphism onto its range. Since $H^\infty(\mathcal{P})$ is complete under the supremum norm and $\|M_\varphi\| = \|\varphi\|_{\infty,\mathcal{P}}$, the range of $\pi_\infty$ is complete, hence a norm-closed subalgebra of $\mathcal{B}(H^2(\mathcal{P}))$.
\end{proof}

Next, we present the following boundary realization theorem for $H^\infty(\mathcal P)$. 

\begin{theorem}
Let $f \in H^\infty(\mathcal P)$ and let $f^* \in L^2(b \mathcal P, \mu)$ be the unique function defined as in Corollary \ref{radial}. Then $f^* \in L^\infty(b \mathcal P, \mu)$ and $  \|f^*\|_{\infty, b \mathcal P}=\|f\|_{\infty,\mathcal P}$. In particular, the boundary value map
    \[
    \tau_{\infty}: H^\infty(\mathcal{P})\rightarrow L^\infty(b\mathcal{P},\mu), \quad f \mapsto f^*
    \]
    is an isometric algebra embedding.  
\end{theorem}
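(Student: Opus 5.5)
We prove the two inequalities $\|f^*\|_{\infty,b\mathcal P}\leq\|f\|_{\infty,\mathcal P}$ and $\|f\|_{\infty,\mathcal P}\leq\|f^*\|_{\infty,b\mathcal P}$ separately, and then check that $\tau_\infty$ is an algebra homomorphism.

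First note that $H^\infty(\mathcal P)\subseteq H^2(\mathcal P)$. Since $\mu$ is a probability measure and $\delta_r(\overline{\mathcal P})\subseteq\mathcal P$, we have $\int_{b\mathcal P}|f\circ\delta_r|^2\,d\mu\leq\|f\|_{\infty,\mathcal P}^2$ for every $r\in(0,1)$. By Theorem \ref{thm:radial-realization}, $f$ lies in $H^2(\mathcal P)$, so $f^*$ is defined by Corollary \ref{radial}.

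\emph{Upper bound.} By Corollary \ref{radial}, $f\circ\delta_r\to f^*$ in $L^2(b\mathcal P,\mu)$. Choosing $r_j\uparrow1$, a subsequence converges $\mu$-a.e. Each $f\circ\delta_{r_j}$ is bounded pointwise by $\|f\|_{\infty,\mathcal P}$ on $b\mathcal P$, so $|f^*|\leq\|f\|_{\infty,\mathcal P}$ $\mu$-a.e. Hence $f^*\in L^\infty$ and $\|f^*\|_\infty\leq\|f\|_{\infty,\mathcal P}$.

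\emph{Lower bound.} This is the main step. The first route is operator-theoretic. Set $M=\|f^*\|_{\infty}$, let $g\in H^2(\mathcal P)$ be a polynomial, and take $z\in\mathcal P$. We want to show $fg\in H^2(\mathcal P)$ with $(fg)^*=f^*g$; this holds because $f\circ\delta_r\cdot g\circ\delta_r\to f^*g$ in $L^2$, since $g\circ\delta_r\to g$ uniformly on $b\mathcal P$ and $f\circ\delta_r$ is uniformly bounded. Corollary \ref{radial} then gives
\[
\|fg\|_{H^2}=\|f^*g\|_{L^2}\leq M\|g\|_{L^2}=M\|g\|_{H^2}.
\]
By density of polynomials and continuity of $M_f$ (Theorem \ref{thm:multipliers}), $\|M_f\|\leq M$. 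The reproducing-kernel argument in that theorem then yields $|f(z)|\leq\|M_f\|\leq M$.

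If one prefers a direct route, an alternative is to use $f\circ\delta_r=P_r[f^*]$-type identities. Lifting via $\Psi$ to $\mathbb D^3$, $F=f\circ\Psi$ is a bounded holomorphic function on $\mathbb D^3$. Its boundary function is $f^*\circ\Psi$, since the radial limits correspond through $\mu=\Psi_*m_3$. By the Poisson representation on the polydisc, $|F|\leq\|f^*\circ\Psi\|_\infty=M$ on $\mathbb D^3$. Then the uniqueness-set and Shilov boundary argument of Theorem \ref{thm:Psi-uniqueness}, applied to $f\circ\delta_r$, extends the bound $M$ to all of $\mathcal P$. The identification of the boundary values of $F$ needs care because the radial dilation on $\mathbb D^3$ through $\Psi$ only dilates $\lambda_1,\lambda_2$. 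For this reason I would use the operator route, where the only delicate point is verifying $(fg)^*=f^*g$.

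\emph{Algebra embedding.} Linearity is clear. Multiplicativity $(fh)^*=f^*h^*$ follows by the same a.e.-subsequence argument, since $(fh)\circ\delta_r=(f\circ\delta_r)(h\circ\delta_r)$ and both factors are uniformly bounded and converge a.e. along a common subsequence. Isometry gives injectivity.
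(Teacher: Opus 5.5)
Your proposal is correct and follows essentially the same route as the paper: the upper bound comes from an a.e.-convergent subsequence of $f\circ\delta_r$ (the paper invokes this too, and also adds a redundant Banach--Alaoglu/weak-$*$ step), and the lower bound comes from the identity $(fg)^*=f^*g^*$ combined with $\|f\|_{\infty,\mathcal P}=\|M_f\|$ from Theorem~\ref{thm:multipliers}. The only minor difference is that you verify the product identity for polynomial $g$ and extend by density, whereas the paper proves $(fh)^*=f^*h^*$ for all $h\in H^2(\mathcal P)$ via an $L^1$ estimate and reuses it to get multiplicativity of $\tau_\infty$.
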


\begin{proof}
Let $f \in H^\infty(\mathcal P)$, and let $f_r: \overline{\mathcal P} \to \mathbb C$ be defined as $f_r(a, s, p)=f(ra, rs, r^2p)$ for $0<r<1$.  Since $\lVert f_r\rVert_{L^\infty(b\mathcal{P},\mu)} \le \lVert f\rVert_{\infty,\mathcal{P}}$, the sequence $\{f_r\}$ is uniformly bounded in $L^\infty(b\mathcal{P},\mu)$. By Banach Alaoglu theorem, there exists a subsequence $\{f_{r_k}\}$ converging to some $g \in L^\infty(b\mathcal P, \mu)$ in the weak-$*$ topology, that is, for every $h \in L^1(b\mathcal P, \mu)$,
\[
\int_{b\mathcal P}f_{r_k}h d\mu \to \int_{b\mathcal P} gh d\mu 
\]
as $r_k \uparrow 1$. Since $H^\infty (\mathcal P) \subseteq H^2(\mathcal P)$, we have by Corollary \ref{radial} that there exists a unique boundary function $f^* \in L^2(b\mathcal P, \mu)$ such that $f_r \to f^*$ in $L^2(b \mathcal P, \mu)$ and in particular, $f_{r_k} \to f^*$ in $L^2(b \mathcal P, \mu)$. Next, we show that $f_{r_k} \xrightarrow{w^*} f^*$ in $L^\infty (b \mathcal P, \mu)$, that is, for every $h \in L^1(b \mathcal P, \mu)$,
\[
\int_{b\mathcal P}f_{r_k}h d\mu \to \int_{b\mathcal P} f^*h d\mu. 
\]
Note that $f^* \in L^\infty (b \mathcal P, \mu)$, because $f \in H^\infty (\mathcal P)$, $f_r \to f^*$ in $L^2(b \mathcal P, \mu)$ and hence a subsequence converges to $f^*$ almost everywhere. Next, take $h \in L^1(b \mathcal P, \mu)$ and let $\varepsilon>0$ be given. Since $L^2(b\mathcal P, \mu)$ is dense in $L^1(b \mathcal P, \mu)$, there exists $\varphi \in L^2(b \mathcal P, \mu)$ such that $\|h-\varphi\|_{L^1(b \mathcal P, \mu)}< \varepsilon \slash (2\|f\|_{\infty,  \mathcal P}+2\|f^*\|_{\infty, b \mathcal P})$. Also, one can choose $N \in \mathbb N$ such that for every $r_k \geq N$, we have $\|f_{r_k}-f^*\|_{L^2(b \mathcal P, \mu)} \leq  \varepsilon \slash (2 \|\varphi\|_{L^2(b \mathcal P, \mu})$. Then
\begin{align*}
\left|\int_{b\mathcal P}(f_{r_k}-f^*)h d\mu \right| 
& \leq \left|\int_{b\mathcal P}(f_{r_k}-f^*)\varphi d\mu \right|+\left|\int_{b\mathcal P}(f_{r_k}-f^*)(h-\varphi) d\mu \right| \\
& \leq \|f_{r_k}-f^*\|_{L^2} \|\varphi\|_{L^2} + (\|f\|_{\infty, \mathcal P}+\|f^*\|_{\infty, b \mathcal P}) \|h-\varphi\|_{L^1}\\
& \leq \varepsilon
\end{align*}
for every $r_k \geq N$.  Thus, $f_{r_k}$ converges to $f^*$ in the weak-$*$ topology of $L^\infty(b \mathcal P, \mu)$. By uniqueness of the weak-$*$ limit, it follows that $f^*=g$ in $L^\infty(b \mathcal P, \mu)$. Then 
\[
\|f^*\|_{\infty, b \mathcal P} \leq \underset{r_k}{\lim \inf} \|f_{r_k}\|_{\infty, b \mathcal P} \leq \|f\|_{\infty, \mathcal P} \quad \text{and so,} \quad \|f^*\|_{\infty, b \mathcal P} \leq \|f\|_{\infty, \mathcal P}. 
\]  
Next, we prove the reverse inequality. Let $h \in H^2(\mathcal P)$. We claim that $(fh)^*=f^*h^*$. Observe that $(fh)_r=f_rh_r$ for $0<r<1$. Since $\mu(b \mathcal P)=1$, we have that $\|.\|_{L^1} \leq \|.\|_{L^2}$. Then by Corollary \ref{radial},
\begin{align*}
    \|f_rh_r-f^*h^*\|_{L^1} 
    & \leq \|f_r(h_r-h^*)\|_{L^1}+\|(f_r-f^*)h^*\|_{L^1} \\
    & \leq \|f_r\|_{L^\infty}\|h_r-h^*\|_{L^1}+\|f_r-f^*\|_{L^2}\|h^*\|_{L^2}\\
    & \leq \|f_r\|_{L^\infty}\|h_r-h^*\|_{L^2}+\|f_r-f^*\|_{L^2}\|h^*\|_{L^2}\\
    & \leq \|f\|_{\infty, \mathcal P}\|h_r-h^*\|_{L^2}+\|f_r-f^*\|_{L^2}\|h^*\|_{L^2}\\
    & \to 0 \quad (r \uparrow 1)
\end{align*}
and  $\|f_rh_r-(fh)^*\|_{L^1}=\|(fh)_r-(fh)^*\|_{L^1} \leq \|(fh)_r-(fh)^*\|_{L^2} \to 0$ as $r \uparrow 1$. By uniqueness of the limits in $L^1(b \mathcal P, \mu)$, $(fh)^*=f^*h^*$. This also implies that $\tau_\infty(h_1h_2)=\tau_\infty(h_1)\tau_\infty(h_2)$ for every $h_1, h_2 \in H^\infty(\mathcal P)$. The linearity of $\tau_\infty$ follows trivially and thus $\tau_\infty$ is an algebra morphism. Again by Corollary \ref{radial}, 
\[
\|M_fh\|=\|fh\|_{H^2(\mathcal P)}=\|(fh)^*\|_{L^2}\leq \|f^*h^*\|_{L^2}\leq \|f^*\|_{\infty, b \mathcal P}\|h^*\|_{L^2}=\|f^*\|_{\infty, b \mathcal P}\|h\|_{H^2(\mathcal P)}.
\]
By Theorem \ref{thm:multipliers}, $\|f\|_{\infty, \mathcal P}=\|M_f\| \leq \|f^*\|_{\infty, b \mathcal P}$. Therefore, $\|\tau_\infty(f)\|_{L^\infty(b \mathcal P, \mu)}=\|f\|_{H^2(\mathcal P)}$ showing that $\tau_\infty$ is an isometry. The proof is now complete. 
\end{proof}

Putting everything together, we have the following commutative diagram: 

\begin{center}
\begin{tikzpicture}[>=stealth]

    \node (Hinf) at (0,4) {$H^\infty(\mathcal P)$};
    \node (Linf) at (7,4) {$L^\infty(b\mathcal P,\mu)$};

    \node (H2P) at (0,2) {$H^2(\mathcal P)$};
    \node (L2P) at (7,2) {$L^2(b\mathcal P,\mu)$};

    \node (H2D) at (0,0) {$H^2(\mathbb D^3)$};
    \node (L2T) at (7,0) {$L^2(\mathbb T^3, m)$};

    \draw[->]
        (Hinf) -- node[above] {$f\mapsto f^*$} (Linf);

    \draw[->]
        (H2P) -- node[above] {$f\mapsto f^*$} (L2P);

    \draw[->]
        (H2D) -- node[above] {\small{radial boundary map}} (L2T);

    \draw[->, right=20]
        (Hinf) to
        node[left] {$f\mapsto f$} (H2P);

    \draw[->]
        (H2P) -- node[left] {$f\mapsto f\circ\Psi$} (H2D);

    \draw[->, left=20]
        (Linf) to
        node[right] {$g\mapsto g$} (L2P);

    \draw[->]
        (L2P) -- node[right] {$g\mapsto g\circ\Psi$} (L2T);

\end{tikzpicture}
\end{center}

\section{Brown-Halmos characterization and commutant lifting for $\mathcal P$}
\label{sec_04}

\noindent In this section, we study Toeplitz operators on $H^2(\mathcal P)$ and the coordinate multiplication operators $(T_a,T_s,T_p)$ on $H^2(\mathcal P)$. We first establish the Brown-Halmos identities that characterizes the Toeplitz operators. Furthermore,  we  study the commutant of the coordinate multiplication operators and the joint reducing subspaces. Finally, we establish a commutant lifting theorem for $\mathcal P$-isometries. Let 
 \[\
P_{H^2(\mathcal P)}: L^2(b \mathcal P, \mu) \to H^2(\mathcal P)
 \]
denote the orthogonal projection. For $\varphi \in L^\infty(b \mathcal P, \mu)$, let $M_{\varphi}: L^2(b \mathcal P, \mu) \to L^2(b\mathcal P, \mu)$ be the multiplication operator defined by 
\[
M_{\varphi}(f)(a, s, p)=\varphi(a, s, p)f(a, s, p)
\]
for every $f \in L^2(b \mathcal P, \mu)$. Evidently, $M_\varphi$ is a bounded normal operator such that $\|M_\varphi\|=\|\phi\|_{\infty, b \mathcal P}$ and $M_\varphi^*=M_{\overline{\varphi}}$. For $\varphi \in L^\infty(b \mathcal P, \mu)$, the Toeplitz operator with symbol $\varphi$ is the bounded operator $T_\varphi: H^2(\mathcal P) \to H^2(\mathcal P)$ given by
\[
T_\varphi=P_{H^2(\mathcal P)}M_\varphi|_{H^2(\mathcal P)}.
\]
The coordinate functions $a, s, p$ are holomorphic polynomials on $\mathcal P$. Since the polynomial algebra is invariant under multiplication by each of the coordinate functions and is dense in $H^2(\mathcal P)$, it follows that 
\[
M_a(H^2(\mathcal P)) \subseteq H^2(\mathcal P), \quad M_s(H^2(\mathcal P) \subseteq H^2(\mathcal P) \quad \text{and} \quad M_p(H^2(\mathcal P)) \subseteq H^2(\mathcal P).
\]
Consequently, the corresponding Toeplitz operators $T_a, T_s$ and $T_p$ are simply the restrictions of the multiplication operators $M_a, M_s$ and $M_p$, respectively. The multiplication operators associated with $H^2(\mathcal P)$ and $L^2(b\mathcal P,\mu)$ give natural examples of isometries and unitaries, respectively, associated with $\mathcal P$. This naturally lead to the definitions of $\mathcal P$-isometries and $\mathcal P$-unitaries, which we recall next from \cite{PalTomar2026}.

\begin{definition}
    A commuting triple $(A, S, P)$ of operators acting on a Hilbert space $\mathcal H$ is said to be 
    \begin{enumerate}[leftmargin=*]
        \item a \textit{$\mathcal P$-unitary} if its joint spectrum $\sigma_T(A, S, P) \subseteq b \mathcal P$;
        \item a \textit{$\mathcal P$-isometry} if it is the restriction of a $\mathcal P$-unitary to a joint invariant subspace.
    \end{enumerate}
\end{definition}

\begin{lemma}
    The triple $(M_a, M_s, M_p)$ is a $\mathcal P$-unitary with $\sigma_T(M_a, M_s, M_p)=b\mathcal P$. Also, $(T_a, T_s, T_p)$ is a $\mathcal P$-isometry such that $T_p$ is a pure isometry and $\sigma_T(T_a, T_s, T_p)=\overline{\mathcal P}$. Moreover, $(M_a, M_s, M_p)$ acting on $L^2(b \mathcal P, \mu)$ is the minimal normal extension of $(T_a, T_s, T_p)$ acting on $H^2(\mathcal P)$.
\end{lemma}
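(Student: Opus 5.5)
We split the lemma into four claims and prove them in order: the spectrum of $(M_a,M_s,M_p)$, the $\mathcal P$-isometry property of $(T_a,T_s,T_p)$, purity of $T_p$ together with the spectrum of the restricted triple, and minimality of the normal extension.

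\emph{The multiplication triple.} The operators $M_a,M_s,M_p$ are commuting normal multiplication operators on $L^2(b\mathcal P,\mu)$ by the coordinate functions. For commuting multiplication operators by continuous functions, the Taylor joint spectrum equals the joint essential range of $(a,s,p)$ with respect to $\mu$. This essential range is the support of $\mu$. In Section \ref{sec_02} we showed that $\mu=\Psi_*m_3$ has full support on $b\mathcal P=\Psi(\mathbb T^3)$, because $m_3$ has full support and $\Psi$ is continuous. Hence $\sigma_T(M_a,M_s,M_p)=b\mathcal P$, and the triple is a $\mathcal P$-unitary.

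\emph{The restricted triple is a $\mathcal P$-isometry.} $H^2(\mathcal P)$ is a joint invariant subspace of $(M_a,M_s,M_p)$, as noted before the definition, and $T_a,T_s,T_p$ are the restrictions. So $(T_a,T_s,T_p)$ is a $\mathcal P$-isometry by definition.

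\emph{$T_p$ is a pure isometry.} $T_p$ is an isometry because $|p|=1$ on $b\mathcal P$. Multiplication by $p$ maps $\mathcal Q_n$ into $\mathcal Q_{n+2}$. Hence $T_p^k H^2(\mathcal P)\subseteq\bigoplus_{n\ge 2k}\mathcal Q_n$ by \eqref{orth}. Intersecting over $k$ gives $\{0\}$, so $T_p$ has no unitary part.

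\emph{Spectrum of the restricted triple.} I expect this step to be the main obstacle. For the inclusion $\sigma_T(T_a,T_s,T_p)\subseteq\overline{\mathcal P}$, I plan to use the fact that $\sigma_T$ of a restriction of a normal tuple to an invariant subspace lies in the polynomial hull of the spectrum of the normal tuple. That hull is $\widehat{b\mathcal P}=\overline{\mathcal P}$, using the polynomial convexity of $\overline{\mathcal P}$ and Proposition \ref{prop:Psi-interior}. For the reverse inclusion, take $\lambda\in\mathcal P$. Since $H^2(\mathcal P)$ is a reproducing kernel Hilbert space (Theorem \ref{thm:hol-realization}), $k_\lambda\neq0$ and $T_x^*k_\lambda=\overline{x(\lambda)}k_\lambda$ for $x=a,s,p$. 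So $\bar\lambda$ is a joint eigenvalue of the adjoint triple, and therefore $\lambda\in\sigma_T(T_a,T_s,T_p)$. Because the Taylor spectrum is closed, it contains $\overline{\mathcal P}$. If the hull-inclusion fact is unavailable, I would instead use von Neumann-type spectral mapping: for every polynomial $q$, $\|q(T)\|\le\|q\|_{\infty,b\mathcal P}$ since $q(T)$ is a restriction of $q(M)$. This makes $\overline{\mathcal P}$ a spectral set, and spectral sets contain the Taylor spectrum.

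\emph{Minimality of the extension.} We must show that the smallest reducing subspace for $(M_a,M_s,M_p)$ containing $H^2(\mathcal P)$ is all of $L^2(b\mathcal P,\mu)$. This subspace contains $\bar q_1 q_2$ for all polynomials $q_1,q_2$, since $M_x^*=M_{\bar x}$ and $1\in H^2(\mathcal P)$. Such products form a self-adjoint, point-separating unital subalgebra of $C(b\mathcal P)$. By Stone--Weierstrass it is dense in $C(b\mathcal P)$, and $C(b\mathcal P)$ is dense in $L^2(b\mathcal P,\mu)$. Hence the reducing subspace is all of $L^2(b\mathcal P,\mu)$.
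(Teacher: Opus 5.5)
Your proposal is correct and follows the paper's proof essentially step by step. It uses the essential range and full support of $\mu$ for $(M_a,M_s,M_p)$, restriction to the invariant subspace, reproducing kernels as joint eigenvectors of the adjoints, the grading $T_p^kH^2(\mathcal P)\subseteq\overline{\bigoplus_{n\ge 2k}\mathcal Q_n}$ for purity, and Stone--Weierstrass for minimality. The differences are minor: you justify $\sigma_T(T_a,T_s,T_p)\subseteq\overline{\mathcal P}$ explicitly through $\|q(T_a,T_s,T_p)\|\le\|q\|_{\infty,b\mathcal P}$, the spectral mapping theorem and $\widehat{b\mathcal P}=\overline{\mathcal P}$ (a step the paper leaves implicit), and you state purity via the Wold criterion $\bigcap_k T_p^kH^2(\mathcal P)=\{0\}$ instead of $\|T_p^{*n}f\|\to 0$.
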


\begin{proof}
    Since $(M_a, M_s, M_p)$ is a commuting triple on $L^2(b \mathcal P, \mu)$, its Taylor joint spectrum coincides with the joint essential range of the coordinate functions $(a, s, p)$. As $\mu$ is supported on $b \mathcal P$, we have that $\sigma_T(M_a, M_s, M_p) \subseteq b\mathcal{P}$. Hence, $(M_a, M_s, M_p)$ is a $\mathcal P$-unitary.  Next, we show that $\sigma_T(M_a, M_s, M_p)=b\mathcal P$. Take
$x=(a_0,s_0,p_0)\in b\mathcal P$.
Let $U$ be any open neighbourhood of $x$ in $\mathbb C^3$. Since the
coordinate map $(a, s, p)$ is identity on $b\mathcal P$, we have that $(a,s,p)^{-1}(U)=U\cap b\mathcal P$. Thus, it suffices to show that
$\mu(U\cap b\mathcal P)>0$. Since
$\mu=\Psi_*m_3$, we have that
$
\mu(U\cap b\mathcal P)
=
m\bigl(\Psi^{-1}(U\cap b\mathcal P)\bigr)$. As $\Psi:\mathbb T^3\to b\mathcal P$ is continuous,
$\Psi^{-1}(U\cap b\mathcal P)$ is an open subset of $\mathbb T^3$.
Also, $\Psi$ is surjective and
$U\cap b\mathcal P\neq\varnothing$, so
$\Psi^{-1}(U\cap b\mathcal P)$ is nonempty. Every nonempty open subset of $\mathbb T^3$ contains a basic open set
of the form
$
I_1\times I_2\times I_3,
$
where each $I_j$ is a nonempty open arc of $\mathbb T$. Since the
normalized Haar measure on $\mathbb T^3$ is the product of the
normalized arc-length measures on the three copies of $\mathbb T$, we
have that
$
m_3(I_1\times I_2\times I_3)
=
m_1(I_1)m_1(I_2)m_1(I_3)
>
0.
$
Hence, $
m_3\bigl(\Psi^{-1}(U\cap b\mathcal P)\bigr)>0$ and thus $\mu(U\cap b\mathcal P)>0$. Thus, $\mu\bigl((a,s,p)^{-1}(U)\bigr)>0$ for every open neighbourhood $U$ of $x$, showing that
$x\in\operatorname{ess\,ran}(a,s,p)$. Thus, $\sigma_T(M_a,M_s,M_p)=b\mathcal P$.  

\smallskip 

Since $(T_a, T_s, T_p)$ is the restriction of $(M_a, M_s, M_p)$ to a joint invariant subspace, the triple $(T_a, T_s, T_p)$ is a $\mathcal P$-isometry and thus $\sigma_T(T_a, T_s, T_p) \subseteq \overline{\mathcal P}$. Since the last component of a $\mathcal P$-isometry is an isometry, $T_p$ is an isometry. As $H^2(\mathcal P)$ is a reproducing kernel Hilbert space, for every
$x=(a,s,p)\in\mathcal P$, there exists a reproducing kernel
$k_x\in H^2(\mathcal P)$ satisfying $f(x)=\langle f,k_x\rangle$  for every $f\in H^2(\mathcal P)$. Then
$
\langle f,T_a^*k_x\rangle
=\langle T_af,k_x\rangle=(af)(x)=a\,f(x)=\langle f,\overline{a}\,k_x\rangle$ for every $f \in H^2(\mathcal P)$.
Hence, $T_a^*k_x=\overline{a}\,k_x$.
Similarly,  $T_s^*k_x=\overline{s}\,k_x$ and $T_p^*k_x=\overline{p}\,k_x$. Thus, $k_x$ is a joint eigenvector of the commuting triple
$(T_a^*,T_s^*,T_p^*)$ corresponding to the joint eigenvalue
$(\overline{a},\overline{s},\overline{p})$. Therefore, $\overline{\mathcal P} \subseteq \sigma_T(T_a^*,T_s^*,T_p^*)$ and so, $\overline{\mathcal P} \subseteq \sigma_T(T_a, T_s, T_p)$. Therefore, $\sigma_T(T_a, T_s, T_p)=\overline{\mathcal P}$. Next, we show that $T_p$ is pure. For $k \ge 0$, define
\[
H_{\ge k} = \bigoplus_{n \ge k} \mathcal{Q}_n.
\]
By the $2$-degree homogeneity of $T_p$, we have $T_p^n(H^2(\mathcal{P})) \subset H_{\ge 2n}$ for all $n \ge 0$. For any $f \in H^2(\mathcal{P})$, it follows that
$
\lVert T_p^{*n} f\rVert = \lVert P_{\operatorname{Ran} T_p^n} f\rVert \le \lVert P_{H_{\ge 2n}} f\rVert \to 0$ as $n \to \infty$ and thus $T_p$ is pure. It remains to verify the minimality. Let
\[\mathcal M=
\overline{\operatorname{span}}
\left\{
M_a^{*m}M_s^{*n}M_p^{*k}f:
f\in H^2(\mathcal P),
m, n, k \geq 0
\right\}.
\] 
Then 
$
\mathcal M=
\overline{\operatorname{span}}
\left\{
M_a^{*m}M_s^{*n}M_p^{*k}q:
q\in\mathbb C[a,s,p],\;
m,n,k\ge0
\right\}
$
since the polynomials are dense in $H^2(\mathcal P)$. Note that $M_a^{*m}M_s^{*n}M_p^{*k}q
=
\overline a^{\,m}\overline s^{\,n}\overline p^{\,k}\,q$. Therefore,
$
\mathcal M
=
\overline{\mathcal A}^{\,L^2(b\mathcal P,\mu)}$, where
$
\mathcal A
=
\operatorname{span}
\left\{
a^is^jp^\ell
\overline a^{\,m}
\overline s^{\,n}
\overline p^{\,k}
:
i,j,\ell,m,n,k\ge0
\right\}.
$
The algebra $\mathcal A$ contains the constants, is closed under complex
conjugation and separates the points of $b\mathcal P$. Hence, by the
complex Stone-Weierstrass theorem,
$
\overline{\mathcal A}^{\,\|\cdot\|_\infty}
=
C(b\mathcal P)$. Since $\mu$ is a finite regular Borel measure on 
$b\mathcal P$, the space $C(b\mathcal P)$ is dense in
$L^2(b\mathcal P,\mu)$. Thus,
$
\mathcal M=\overline{\mathcal A}^{\,L^2(b\mathcal P,\mu)}
=
L^2(b\mathcal P,\mu)$. The proof is now complete.
\end{proof}

If $M$ is a bounded operator on $L^2(\mathbb T)$ belonging to the commutant of the operator $M_z$ on $L^2(\mathbb T)$, then there exists a map $\varphi \in L^\infty(\mathbb T)$ such that $M=M_\varphi$. Analogous results have been proved for the symmetrized bidisc \cite{BhattacharyyaDasSau2021} and tetrablock \cite{JainKumarMalPramanick2026} (also see \cite{KumarMalPramanick2025}). Our next result is an analogue of this property for the pentablock. 

\begin{lemma}\label{lem_402}
    Let $M$ be a bounded operator on $L^2(b \mathcal P, \mu)$. If $M$ commutes with $M_a, M_s$ and $M_p$, then there exists $\varphi \in L^\infty(b \mathcal P, \mu)$ such that $M=M_{\varphi}$.
\end{lemma}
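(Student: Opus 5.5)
The plan is to run the classical argument: an operator commuting with $M_a,M_s,M_p$ will be shown to commute with all their adjoints as well, hence with the whole von Neumann algebra $L^\infty(b\mathcal P,\mu)$ they generate; then $M$ will be identified as multiplication by $\varphi := M1$.

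\textbf{Step 1: commuting with the adjoints.} Since $M_a,M_s,M_p$ are normal and pairwise commuting, Fuglede's theorem (applied to each of them) gives $MM_a^*=M_a^*M$, and likewise $MM_s^*=M_s^*M$ and $MM_p^*=M_p^*M$. Thus $M$ commutes with $M_q$ for every $q$ in the $*$-algebra $\mathcal A$ spanned by the monomials $a^is^jp^\ell\overline a^{m}\overline s^{n}\overline p^{k}$. This is the same algebra that appeared in the minimality part of the preceding lemma. There the Stone--Weierstrass theorem showed that $\mathcal A$ is uniformly dense in $C(b\mathcal P)$. Because $\|M_\psi\|=\|\psi\|_{\infty,b\mathcal P}$ for continuous $\psi$, we get $MM_\psi=M_\psi M$ for all $\psi\in C(b\mathcal P)$.

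\textbf{Step 2: identifying the symbol.} Put $\varphi=M1\in L^2(b\mathcal P,\mu)$. For $\psi\in C(b\mathcal P)$ we have
\[
M\psi = MM_\psi 1 = M_\psi M1 = \varphi\psi .
\]
Therefore $\|\varphi\psi\|_{L^2}\le \|M\|\,\|\psi\|_{L^2}$ for all continuous $\psi$. To conclude that $\varphi\in L^\infty$ with $\|\varphi\|_\infty\le\|M\|$, I would argue by contradiction. Suppose $E=\{|\varphi|>\|M\|+\varepsilon\}$ has positive measure. By regularity of $\mu$ and Lusin's theorem, there are continuous $\psi_j$ with $|\psi_j|\le 1$ and $\psi_j\to\chi_E$ $\mu$-a.e. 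Applying Fatou's lemma to $\int|\varphi\psi_j|^2\,d\mu$ yields
\[
(\|M\|+\varepsilon)^2\mu(E)\le\|M\|^2\mu(E),
\]
which is a contradiction.

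\textbf{Step 3: extending from continuous functions to all of $L^2$.} Now $M$ and $M_\varphi$ are both bounded and agree on $C(b\mathcal P)$. Since $C(b\mathcal P)$ is dense in $L^2(b\mathcal P,\mu)$ (finite regular Borel measure on a compact set), it follows that $M=M_\varphi$.

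I expect the main obstacle to be Step 1, that is, justifying that $M$ commutes with the adjoints. If one wishes to avoid Fuglede's theorem, there is an alternative: the maps $a,s,p$ are uniformly approximable on $b\mathcal P$ by polynomials in $a,s,p$ only where this is possible, which is not automatic. So I would rely on Fuglede's theorem, which applies directly because each $M_x$ is normal. The rest of the argument is routine.
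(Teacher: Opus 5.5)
Your proposal is correct and follows essentially the paper's route. The paper simply invokes the spectral theorem to assert that $W^*(M_a,M_s,M_p)=\{M_\varphi:\varphi\in L^\infty(b\mathcal P,\mu)\}$ is maximal abelian; your Steps 2--3 ($\varphi=M1$, the Lusin--Fatou bound $\|\varphi\|_\infty\le\|M\|$, and density of $C(b\mathcal P)$ in $L^2$) are the standard proof of that maximality, and your explicit use of Fuglede's theorem in Step 1 supplies a step the paper leaves implicit, namely that commuting with $M_a,M_s,M_p$ places $M$ in the commutant of $W^*(M_a,M_s,M_p)$ only once one knows $M$ also commutes with their adjoints.
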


\begin{proof}
    Since $(M_a, M_s, M_p)$ is a commuting triple of normal operators with its joint spectrum $\sigma_T(M_a, M_s, M_p)=b \mathcal{P}$, it follows from the spectral theorem that the von Neumann algebra $W^*(M_a, M_s, M_p)$ generated by $M_a, M_s, M_p$ is the maximal abelian von Neumann algebra that satisfies $W^*(M_a, M_s, M_p)=W^*(M_a, M_s, M_p)'=\{M_\varphi: \varphi \in L^\infty(b \mathcal P, \mu)\}$.  The proof is now complete.  
\end{proof}

Several characterizations of $\mathcal P$-unitaries were obtained in \cite{PalTomar2026, JindalKumar2024}. In particular, we have by Theorem 4.2 in \cite{PalTomar2026} that the $\mathcal P$-unitary $(M_a, M_s, M_p)$ satisfies 
\begin{align}\label{eqn_601}
M_a^*M_a+\frac{1}{4}M_s^*M_s=I, \quad M_s=M_s^*M_p \quad \text{and} \quad M_p^*M_p=I.
\end{align}
Let $T = T_\varphi$ be the Toeplitz operator with symbol $\varphi \in L^\infty(b\mathcal{P},\mu)$. For $f, g \in H^2(\mathcal{P})$, we have
\begin{align*}
\langle T_p^* T_\varphi T_p f, g \rangle
= \langle T_\varphi T_p f, T_p g \rangle
= \langle P_{H^2(\mathcal{P})} M_\varphi T_p f, T_p g \rangle
&= \langle M_\varphi M_p f, M_p g \rangle \\
&= \langle M_p M_\varphi f, M_p g \rangle \\
&= \langle M_\varphi f, g \rangle \qquad [\text{by \eqref{eqn_601}}] \\
&= \langle P_{H^2(\mathcal{P})} M_\varphi f, g \rangle \\
&= \langle T_\varphi f, g \rangle,
\end{align*}
\quad\text{and}\quad
\begin{align*}
\langle T_s^* T_\varphi T_p f, g \rangle
= \langle T_\varphi T_p f, T_s g \rangle
= \langle P_{H^2(\mathcal{P})} M_\varphi T_p f, T_s g \rangle
&= \langle M_\varphi M_p f, M_s g \rangle \\
&= \langle M_p M_\varphi f, M_s g \rangle \\
&= \langle M_s^* M_p M_\varphi f, g \rangle \\
&= \langle M_s M_\varphi f, g \rangle \qquad [\text{by \eqref{eqn_601}}] \\
&= \langle M_\varphi M_s f, g \rangle \\
&= \langle P_{H^2(\mathcal{P})} M_\varphi M_s f, g \rangle \\
&= \langle T_\varphi T_s f, g \rangle,
\end{align*}
\quad\text{and}\quad
\begin{align*}
\langle T_a^* T_\varphi T_a f, g \rangle
= \langle T_\varphi T_a f, T_a g \rangle
&= \langle P_{H^2(\mathcal{P})} M_\varphi T_a f, T_a g \rangle \\
&= \langle M_\varphi M_a f, M_a g \rangle \\
&= \langle M_a M_\varphi f, M_a g \rangle \\
&= \langle M_a^* M_a M_\varphi f, g \rangle \\
&= \langle M_\varphi f, g \rangle - \frac{1}{4} \langle M_s^* M_s M_\varphi f, g \rangle \qquad [\text{by \eqref{eqn_601}}] \\
&= \langle M_\varphi f, g \rangle - \frac{1}{4} \langle M_\varphi M_s f, M_s g \rangle \\
&= \langle P_{H^2(\mathcal{P})} M_\varphi f, g \rangle - \frac{1}{4} \langle P_{H^2(\mathcal{P})} M_\varphi M_s f, M_s g \rangle \\
&= \langle T_\varphi f, g \rangle - \frac{1}{4} \langle T_\varphi T_s f, T_s g \rangle \\
&= \left\langle \left( T_\varphi - \frac{1}{4} T_s^* T_\varphi T_s \right) f, g \right\rangle .
\end{align*}
Therefore, 
\[
T_p^* T_\varphi T_p = T_\varphi, \quad T_s^* T_\varphi T_p = T_\varphi T_s \quad \text{and} \quad T_a^* T_\varphi T_a + \frac{1}{4} T_s^* T_\varphi T_s = T_\varphi.
\]
The above identities are the natural Brown-Halmos type relations for Toeplitz operators on $H^2(\mathcal{P})$. This motivates the following definition.
\begin{definition}
    A bounded operator $T$ acting on $H^2(\mathcal P)$ is said to satisfy the Brown-Halmos identities if the following holds:
  \[
T_p^* T T_p = T, \quad T_s^* T T_p = T T_s \quad \text{and} \quad T_a^* T T_a + \frac{1}{4} T_s^* T T_s = T.
\]
\end{definition}

We now present a commutant lifting lemma for operators acting on $H^2(\mathcal P)$ that satisfy the Brown-Halmos identities. The analogous statements appear in earlier works including the symmetrized bidisc \cite{BhattacharyyaDasSau2021}, the tetrablock \cite{JainKumarMalPramanick2026} and the symmetrized polydisc \cite{DasSau2021}. However, the same techniques in the corresponding proofs need not work due to the presence of the third identity in the Brown-Halmos identities for $H^2(\mathcal P)$.

\begin{lemma}\label{lem:lifting}
If $T\in \mathcal B(H^2(\mathcal P))$ satisfies the Brown-Halmos identities, then there exists an operator $M\in \mathcal B(L^2(b\mathcal P,\mu))$ such that $\|M\|=\|T\|$, $M$ commutes with the coordinate multiplication operators $M_a, M_s, M_p$ and $T=P_{H^2(\mathcal P)}M|_{H^2(\mathcal P)}$.
\end{lemma}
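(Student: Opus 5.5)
The plan is to build $M$ as a limit of conjugates of the zero-extension $X_0=TP_{H^2(\mathcal P)}\in\mathcal B(L^2(b\mathcal P,\mu))$, and then to show that $M$ satisfies lifted versions of the three Brown-Halmos identities. Those identities, combined with the $\mathcal P$-unitary relations \eqref{eqn_601}, will force $M$ to commute with $M_a,M_s,M_p$.

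\textbf{Step 1: construct $M$.} Consider the unital completely positive maps on $\mathcal B(L^2(b\mathcal P,\mu))$ given by
\[
\Phi_p(X)=M_p^*XM_p,\qquad \Phi_a(X)=M_a^*XM_a+\tfrac14M_s^*XM_s .
\]
Both are unital by \eqref{eqn_601}; this is the ``spherical normalization'' of the pair $(M_a,\tfrac12M_s)$. Because $H^2(\mathcal P)$ is invariant under $M_a,M_s,M_p$, we have $P_{H^2}M_x^*P_{H^2}=P_{H^2}M_x^*$ on $H^2$ for $x\in\{a,s,p\}$. The Brown-Halmos identities then give
\[
P_{H^2}\Phi_p^{\,n}\Phi_a^{\,m}(X_0)|_{H^2}=T\quad\text{for all }m,n .
\]
Take the doubly indexed Cesàro averages of $\Phi_p^{\,n}\Phi_a^{\,m}(X_0)$ and pass to a WOT cluster point $M$ along a Banach limit. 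This is the Davie--Jewell fixed-point device. It yields $\|M\|\le\|X_0\|=\|T\|$, $P_{H^2}M|_{H^2}=T$, and hence $\|M\|=\|T\|$. It also gives $\Phi_p(M)=M$, and $\Phi_a(M)=M$ provided $\Phi_a$ and $\Phi_p$ commute. They do commute, because $M_a,M_s,M_p$ are commuting normal operators and $M_p$ is unitary.

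\textbf{Step 2: lift the second identity.} I also need the relation
\[
M_s^*MM_p=MM_s .
\]
I plan to check it on the dense set of vectors $\overline{a}^{\,i}\overline{s}^{\,j}\overline{p}^{\,k}q$ with $q$ a polynomial. Density comes from the minimality part of the lemma preceding Lemma \ref{lem_402}. On these vectors I reduce to $T_s^*TT_p=TT_s$ via the commutation of $\Phi_p$ and $\Phi_a$ with the maps $X\mapsto M_s^*XM_p$ and $X\mapsto XM_s$. If this direct verification proves awkward, the fallback is to also average over the map $X\mapsto M_s^*XM_p$ and its powers.

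\textbf{Step 3: deduce the commutation relations.}
\begin{itemize}
\item From $M_p^*MM_p=M$ and the unitarity of $M_p$, $M$ commutes with $M_p$.
\item Then $M_s^*MM_p=M_s^*M_pM=M_sM$ by \eqref{eqn_601}, so the second identity reads $M_sM=MM_s$.
\item Fuglede's theorem then gives that $M$ commutes with $M_s^*$, hence with all of $W^*(M_s,M_p)$.
\item In particular $M$ commutes with multiplication by $\rho(s,p)=|a|$, since $|a|^2=1-|s|^2/4$ on $b\mathcal P$.
\end{itemize}
Now write $M_a=M_\rho M_u$, where $u=a/|a|$ is unimodular $\mu$-a.e., since $\rho>0$ a.e. The identity $\Phi_a(M)=M$ together with $\tfrac14M_s^*MM_s=M\cdot\tfrac14M_s^*M_s=M(I-M_\rho^2)$ becomes
\[
M_\rho\,(M_u^*MM_u-M)\,M_\rho=0 .
\]
The operator $M_\rho$ has dense range because $\rho>0$ a.e. Hence $M_u^*MM_u=M$, and as $M_u$ is unitary, $M$ commutes with $M_u$. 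It then commutes with $M_a=M_\rho M_u$, and Lemma \ref{lem_402} is available afterwards.

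\textbf{Main obstacle.} The hard part is Step 1 together with Step 2: making sure one single limiting operator $M$ satisfies all three lifted identities simultaneously. This is exactly where the third identity, which is not of single-operator conjugation type, prevents the symmetrized-bidisc argument from applying verbatim. The first thing I would try is the joint Banach-limit average over the commuting semigroup generated by $\Phi_p$ and $\Phi_a$. I would then verify that the mixed identity passes to the limit, using the fact that both maps are multiplicative modulo the commuting normal operators.
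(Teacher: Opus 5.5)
Your Steps 1 and 3 are sound. The commuting unital maps $\Phi_p,\Phi_a$ preserve the compression $T$, and the joint Ces\`aro average gives $M$ with $\Phi_p(M)=M=\Phi_a(M)$, $P_{H^2(\mathcal P)}M|_{H^2(\mathcal P)}=T$ and $\|M\|=\|T\|$. Once $MM_s=M_sM$ is known, your factorization $M_a=M_\rho M_u$ with $\rho>0$ $\mu$-a.e.\ does give $MM_a=M_aM$.

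\textbf{The gap is Step 2.} Your whole route depends on it, and it is not proved.
\begin{itemize}
\item Checking $M_s^*MM_p=MM_s$ on the vectors $\overline a^{\,i}\overline s^{\,j}\overline p^{\,k}q$ reduces to $T_s^*TT_p=TT_s$ only if you can move $M_a^{*i},M_s^{*j},M_p^{*k}$ across $M$. That means you must already know $M$ commutes with the adjoints of the coordinate multipliers, which is circular.
\item What the intertwining of $\Phi_p,\Phi_a$ with $\Lambda_1(X)=M_s^*XM_p$ and $\Lambda_2(X)=XM_s$ actually gives is weaker. It shows only that $D=\Lambda_1(M)-\Lambda_2(M)$ is a joint fixed point of $\Phi_p,\Phi_a$ with $P_{H^2(\mathcal P)}D|_{H^2(\mathcal P)}=0$.
\item From $\Phi_p(D)=D$ you get $DM_p=M_pD$. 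Together with the vanishing compression, this forces only $\langle Dx,y\rangle=0$ for $x,y\in\bigvee_k\overline p^{\,k}H^2(\mathcal P)$. That subspace reduces $M_p$ and is orthogonal to $\overline a$, by the fibre-rotation invariance of $\mu$. So the projection onto its orthogonal complement satisfies both facts without vanishing.
\item To reach the $\overline a$-directions you must use $\Phi_a(D)=D$. That is, you need fixed points of $\Phi_a$ to commute with $M_a$, which is exactly the fixed-point principle Step 3 was designed to avoid.
\item The fallback fails as well. The map $X\mapsto M_s^*XM_p$ is neither unital nor contractive: its $k$-th power sends $I$ to $M_s^k$, of norm $2^k$. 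Moreover, $\Lambda_1(M)=\Lambda_2(M)$ is not a fixed-point equation of a single map, so averaging has nothing to act on.
\end{itemize}

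The missing ingredient is the Davie--Jewell principle (their Proposition 2.4, as used in the paper). For a commuting normal tuple $(S_1,\dots,S_n)$ with $\sum_jS_j^*S_j=I$, every $X$ with $\sum_jS_j^*XS_j=X$ commutes with each $S_j$. Since $M_a^*M_a+\frac14M_s^*M_s=I$, the pair $(M_a,\frac12M_s)$ qualifies. So $\Phi_a(M)=M$ already gives $MM_a=M_aM$ and $MM_s=M_sM$. Combined with $MM_p=M_pM$ from $\Phi_p(M)=M$, you are done, and Steps 2 and 3 become unnecessary.

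The paper does this with a single map. It adds half of the first identity to half of the third and averages $\Phi=\frac12\Phi_a+\frac12\Phi_p$. This is the map attached to the spherical unitary $(M_a/\sqrt2,\,M_s/(2\sqrt2),\,M_p/\sqrt2)$, and the paper invokes Davie--Jewell once. In particular, the second Brown--Halmos identity is never needed for the lifting, so trying to lift it was a detour.
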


\begin{proof}
It follows from the Brown-Halmos identities for $T$ and \eqref{eqn_601} that
\[
\frac12T_a^*TT_a+\frac18T_s^*TT_s+\frac12T_p^*TT_p=T \quad \text{and} \quad 
\frac12M_a^*M_a+\frac18M_s^*M_s+\frac12M_p^*M_p=I.
\]
Consider the commuting normal triple of operators acting on $L^2(b \mathcal P, \mu)$ given by
\[
(N_1, N_2, N_3)=\left(\frac1{\sqrt2}M_a, \frac1{2\sqrt2}M_s, \frac1{\sqrt2}M_p \right).
\]
Evidently, $N_1^*N_1+N_2^*N_2+N_3^*N_3=I$. Define a linear map
\[
\Phi:\mathcal B(L^2(b\mathcal P,\mu))
\to
\mathcal B(L^2(b\mathcal P,\mu)) \quad \text{as} \quad 
\Phi(R)=N_1^*RN_1+N_2^*RN_2+N_3^*RN_3.
\]
Since $N_1^*N_1+N_2^*N_2+N_3^*N_3=I$, it follows that $\|\Phi(R)\|
\leq \|R\|$ for every $R\in\mathcal B(L^2(b\mathcal P,\mu))$. Choose any extension $F\in\mathcal B(L^2(b\mathcal P,\mu))$ of $T$ satisfying $\|F\|=\|T\|$ and $P_{H^2(\mathcal P)}F|_{H^2(\mathcal P)}=T$. One can choose $F=\begin{bmatrix}
    T & 0\\
    0 & 0
\end{bmatrix}$ with respect to  $L^2(b\mathcal P, \mu)=H^2(\mathcal P) \oplus H^2(\mathcal P)^\perp$. For $m\geq 1$, define
\[
S_m=\frac1m\sum_{k=1}^m\Phi^k(F).
\]
Since $\Phi$ is contractive, $\|S_m\| \leq \|F\|
=\|T\|$. Hence, $\{S_m\}$ is bounded and therefore admits a weak operator topology cluster point. Let $M$ be such a cluster point. We claim that each $\Phi^k(F)$ is a compression of $T$, that is, 
\[
P_{H^2(\mathcal P)}\Phi^k(F)|_{H^2(\mathcal P)} = T.
\]
Suppose $R$ is a compression of $T$. Then for $f,g\in H^2(\mathcal P)$,
\begin{align*}
\langle P_{H^2(\mathcal P)}\Phi(R)f,g\rangle
=
\langle \Phi(R)f,g\rangle
&=
\sum_{j=1}^3
\langle RN_jf,N_jg\rangle\\
&=
\sum_{j=1}^3
\langle P_{H^2(\mathcal P)}RN_jf,N_jg\rangle \quad [\text{as each $N_jf, N_jg \in H^2(\mathcal P)$}]\\
&=
\sum_{j=1}^3
\langle TN_jf, N_jg\rangle\\
&=
\frac{1}{2}\langle TM_af,M_ag\rangle + \frac{1}{8}\langle TM_sf, M_sg\rangle+\frac{1}{2}\langle TM_pf, M_pg\rangle\\
&=
\frac{1}{2}\langle TT_af,T_ag\rangle + \frac{1}{8}\langle TT_sf, T_sg\rangle+\frac{1}{2}\langle TT_pf,T_pg\rangle \\
&=
\langle Tf,g\rangle,
\end{align*}
where the last equality follows from the Brown-Halmos identities. Hence, $\Phi(R)$ is again a compression of $T$. Since $F$ is a compression of $T$, it follows inductively that every $\Phi^k(F)$ is a compression of $T$. Consequently, $P_{H^2(\mathcal P)}S_m|_{H^2(\mathcal P)}=T$ for every $m \geq 1$. Passing to the weak operator topology limit, we have that 
\[
P_{H^2(\mathcal P)}M|_{H^2(\mathcal P)}=T.
\]
Next observe that
\[
\Phi(S_m)-S_m
=
\frac1m\bigl(\Phi^{m+1}(F)-\Phi(F)\bigr).
\]
Since $\Phi$ is contractive, $\|\Phi^{m+1}(F)-\Phi(F)\|
\leq
2\|F\|$ and therefore
\[
\|\Phi(S_m)-S_m\|
\leq
\frac{2\|F\|}{m}=\frac{2\|T\|}{m}
\longrightarrow0.
\]
Since norm convergence implies weak operator convergence and $\Phi$ is weak operator continuous, we have that $\Phi(M)=M$. Finally, we apply Proposition 2.4 in \cite{DavieJewell1977} to the commuting normal operators
$N_1, N_2, N_3$. Since $\Phi(M)=M$, that is, $N_1^*MN_1+N_2^*MN_2+N_3^*MN_3=M$, it follows that $MN_j=N_jM$ for $1 \leq j \leq 3$. Consequently, $M$ commutes with $M_a, M_s$ and $M_p$. Since $M$ is a cluster point of $\{S_m\}$ in the weak operator topology, $\lVert M\rVert \le \liminf \lVert S_m\rVert \le \lVert T\rVert$. Together with the reverse inequality $\lVert T\rVert \le \lVert M\rVert$, we conclude $\lVert M\rVert = \lVert T\rVert$. 
\end{proof}

Next, we present a complete description of all Toeplitz operators on $H^2(\mathcal P)$.

\begin{theorem}\label{thm_BH}
    A bounded linear operator $T$ acting on $H^2(\mathcal P)$ is Toeplitz if and only if $T$ satisfies the Brown-Halmos identities. 
\end{theorem}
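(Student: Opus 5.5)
The proof splits into the two directions. Both are essentially already prepared by the computations preceding the definition of the Brown-Halmos identities and by Lemma \ref{lem:lifting}.

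\emph{Necessity.} Suppose $T=T_\varphi$ for some $\varphi\in L^\infty(b\mathcal P,\mu)$. The three displayed computations just before the definition show the required identities directly. They use only three facts: $M_\varphi$ commutes with $M_a,M_s,M_p$; the multiplication operators $M_a,M_s,M_p$ map $H^2(\mathcal P)$ into itself, so that $P_{H^2(\mathcal P)}$ can be inserted or removed against $T_pg$, $T_sg$, $T_ag$; and the $\mathcal P$-unitary relations \eqref{eqn_601}. I would simply cite these computations, which give
\[
T_p^*T_\varphi T_p=T_\varphi,\qquad T_s^*T_\varphi T_p=T_\varphi T_s,\qquad T_a^*T_\varphi T_a+\tfrac14T_s^*T_\varphi T_s=T_\varphi .
\]

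\emph{Sufficiency.} Let $T\in\mathcal B(H^2(\mathcal P))$ satisfy the Brown-Halmos identities.
\begin{enumerate}
\item Apply Lemma \ref{lem:lifting}. This gives $M\in\mathcal B(L^2(b\mathcal P,\mu))$ that commutes with $M_a,M_s,M_p$ and satisfies $P_{H^2(\mathcal P)}M|_{H^2(\mathcal P)}=T$.
\item By Lemma \ref{lem_402}, $M=M_\varphi$ for some $\varphi\in L^\infty(b\mathcal P,\mu)$.
\item Hence $T=P_{H^2(\mathcal P)}M_\varphi|_{H^2(\mathcal P)}=T_\varphi$, so $T$ is Toeplitz.
\end{enumerate}
As a byproduct, $\|T_\varphi\|=\|M\|=\|\varphi\|_\infty$.

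\emph{Where the difficulty lies.} The substantive part is entirely in Lemma \ref{lem:lifting}, which is already established. Its key ingredients are the normalized row-contraction $\tfrac12M_a^*M_a+\tfrac18M_s^*M_s+\tfrac12M_p^*M_p=I$, the Cesàro averaging of $\Phi^k(F)$, and the Davie-Jewell fixed point result. So in the theorem itself the only point needing care is the necessity direction. There one must check that $T_s^*T_\varphi T_p=T_\varphi T_s$ really follows from $M_s^*M_p=M_s$. This holds because it is the adjoint form of $M_s=M_s^*M_p$, both sides being normal multiplication operators. One must also check that $\langle M_\varphi M_sf,M_sg\rangle=\langle T_\varphi T_sf,T_sg\rangle$, which uses $M_sg\in H^2(\mathcal P)$.
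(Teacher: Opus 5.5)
Your proposal is correct and follows the paper's proof. Necessity is exactly the displayed computations preceding the definition, using \eqref{eqn_601}, and sufficiency is Lemma~\ref{lem:lifting} followed by Lemma~\ref{lem_402}; the only small point is that the second computation uses $M_s^*M_p=M_s$, which is the middle identity of \eqref{eqn_601} as written, so no passage to adjoints is needed.
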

\begin{proof}
We have already shown that every Toeplitz operator on $H^2(\mathcal P)$
satisfies the Brown-Halmos identities. Conversely, let $T\in\mathcal B(H^2(\mathcal P))$ satisfy the Brown-Halmos identities. By Lemma \ref{lem:lifting}, there exists a
bounded operator $M$ on $L^2(b\mathcal P,\mu)$ such that $\|M\|=\|T\|$, $M$ commutes with $M_a, M_s, M_p$ and $T=P_{H^2(\mathcal P)}M|_{H^2(\mathcal P)}$. By Lemma \ref{lem_402}, there exists a function
$\varphi\in L^\infty(b\mathcal P,\mu)$ such that $M=M_\varphi$. Hence,
\[
T
=
P_{H^2(\mathcal P)}M|_{H^2(\mathcal P)}
=
P_{H^2(\mathcal P)}M_\varphi|_{H^2(\mathcal P)}
=
T_\varphi.
\]
The proof is now complete.
\end{proof}

As applications of the preceding theorem, we obtain the following results.

\begin{corollary}\label{cor_406}
    Let $T \in \mathcal{B}(H^2(\mathcal P))$ commute with $T_a, T_s$ and $T_p$. Then $T$ is Toeplitz. 
\end{corollary}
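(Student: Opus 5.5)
The plan is to check directly that $T$ satisfies the three Brown-Halmos identities and then apply Theorem \ref{thm_BH}. The key inputs are that $T_a,T_s,T_p$ are restrictions of $M_a,M_s,M_p$ to the invariant subspace $H^2(\mathcal P)$, and that the coordinate multipliers satisfy operator identities inherited from \eqref{eqn_601}. Compressing \eqref{eqn_601} to $H^2(\mathcal P)$, for $f,g\in H^2(\mathcal P)$ we have $\langle T_p^*T_pf,g\rangle=\langle M_p^*M_pf,g\rangle=\langle f,g\rangle$, and similarly for the other two. This gives
\[
T_p^*T_p=I,\qquad T_s^*T_p=T_s,\qquad T_a^*T_a+\tfrac14T_s^*T_s=I .
\]
For the middle identity we use $\langle T_s^*T_pf,g\rangle=\langle M_pf,M_sg\rangle=\langle M_s^*M_pf,g\rangle=\langle M_sf,g\rangle=\langle T_sf,g\rangle$, where the third equality is the identity $M_s=M_s^*M_p$ from \eqref{eqn_601}.

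Now let $T$ commute with $T_a,T_s,T_p$. Moving $T$ past the right-hand coordinate multiplier in each identity gives
\[
T_p^*TT_p=T_p^*T_pT=T,\qquad T_s^*TT_p=T_s^*T_pT=T_sT=TT_s,
\]
\[
T_a^*TT_a+\tfrac14T_s^*TT_s=\bigl(T_a^*T_a+\tfrac14T_s^*T_s\bigr)T=T.
\]
So $T$ satisfies the Brown-Halmos identities, and Theorem \ref{thm_BH} shows that $T=T_\varphi$ for some $\varphi\in L^\infty(b\mathcal P,\mu)$.

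The argument has essentially no obstacle. The only point needing care is that the compressed identities really hold on $H^2(\mathcal P)$. This works because $H^2(\mathcal P)$ is invariant under $M_a,M_s,M_p$, so the adjoints $T_\bullet^*=P_{H^2(\mathcal P)}M_\bullet^*|_{H^2(\mathcal P)}$ combine with products as computed above. For the second identity in particular, we must apply $M_s^*M_p=M_s$ before projecting, which is legitimate since $M_pf\in H^2(\mathcal P)$.

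One could additionally identify $\varphi$ as lying in $H^\infty$, but the statement only asks that $T$ be Toeplitz, so we stop here.
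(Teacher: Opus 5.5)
Your proof is correct and follows the paper's argument: you obtain $T_p^*T_p=I$, $T_s^*T_p=T_s$ and $T_a^*T_a+\tfrac14T_s^*T_s=I$, move $T$ past the coordinate multipliers to get the Brown--Halmos identities, and then invoke Theorem~\ref{thm_BH}. The only difference is that you derive those three identities by compressing \eqref{eqn_601} to the invariant subspace $H^2(\mathcal P)$, whereas the paper cites \cite{PalTomar2026} for them as identities satisfied by every $\mathcal P$-isometry; your version is self-contained, and the paper's version requires no computation.
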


\begin{proof}
   Since $(T_a, T_s, T_p)$ is a $\mathcal P$-isometry, it follows from \cite{PalTomar2026} that 
   \[
   T_a^*T_a+\frac{1}{4}T_s^*T_s=I, \quad  T_s=T_s^*T_p \quad \text{and} \quad T_p^*T_p=I.
   \]
  As $T$ commutes with $T_a, T_s, T_p$, we have that $T$ acting on $H^2(\mathcal P)$ satisfies the Brown-Halmos identities. The desired conclusion now follows from Theorem \ref{thm_BH}.
\end{proof}

\begin{corollary}
    The only compact Toeplitz operator on $H^2(\mathcal{P})$ is the zero operator.
\end{corollary}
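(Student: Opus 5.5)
Suppose $T=T_\varphi$ is compact for some $\varphi\in L^\infty(b\mathcal P,\mu)$; we show $\varphi=0$ $\mu$-a.e., which gives $T=0$. The tool is the first Brown--Halmos identity $T_p^{*n}T_\varphi T_p^n=T_\varphi$ for all $n\ge1$, obtained by iterating $T_p^*TT_p=T$. We combine it with the purity of the isometry $T_p$ established in the lemma on $(T_a,T_s,T_p)$.

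First, $T_p^{*n}\to0$ in the strong operator topology because $T_p$ is pure. Hence $T_p^n\to0$ weakly, since for $f,g\in H^2(\mathcal P)$ we have $\langle T_p^nf,g\rangle=\langle f,T_p^{*n}g\rangle\to0$. A compact operator maps weakly null sequences to norm null sequences, so $\|T_\varphi T_p^nf\|\to0$ for each $f$. Therefore
\[
\|T_\varphi f\|=\|T_p^{*n}T_\varphi T_p^nf\|\le\|T_\varphi T_p^nf\|\to0,
\]
and $T_\varphi=0$. This already shows that a compact Toeplitz operator is zero as an operator.

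If we also want $\varphi=0$, we argue as follows. Suppose $T_\varphi=0$. For polynomials $f,g$ and multi-indices $\alpha,\beta$,
\[
\langle M_\varphi\, a^{\alpha_1}s^{\alpha_2}p^{\alpha_3}f,\ a^{\beta_1}s^{\beta_2}p^{\beta_3}g\rangle=\langle T_\varphi(\cdots),\cdots\rangle=0.
\]
This means $\int \varphi\, h\,\bar k\,d\mu=0$ for all polynomials $h,k$. By the Stone--Weierstrass argument used in the minimality part of that lemma, the span of products $h\bar k$ is dense in $C(b\mathcal P)$, hence in $L^1(b\mathcal P,\mu)$ when paired against $L^\infty$. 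So $\varphi=0$ a.e., and the symbol map $\varphi\mapsto T_\varphi$ is injective.

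The only delicate point is the strong convergence $T_p^{*n}\to0$, and this is exactly the purity already proved. Everything else is routine, so we expect no real obstacle.
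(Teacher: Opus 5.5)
Your proof is correct and is essentially the paper's argument. You iterate $T_p^*T_\varphi T_p=T_\varphi$, use purity of $T_p$ to get $T_p^nf\to0$ weakly, and let compactness force $\|T_\varphi T_p^nf\|\to0$; the paper pairs against $T_p^ng$ where you bound $\|T_p^{*n}\|\le1$, which amounts to the same thing. Your extra step showing $\varphi=0$ $\mu$-a.e.\ via the Stone--Weierstrass density of the span of the products $h\bar k$ is also correct, but the statement does not need it, since it only asks that the operator vanish.
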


\begin{proof}
   Let $T_{\varphi}$ be a compact Toeplitz operator. By Theorem~\ref{thm_BH}, $(T_p^*)^n T_{\varphi} T_p^n = T_{\varphi}$ for every $n \geq 1$. Since $T_p$ is a pure isometry,
\[
\langle T_p^n f, g \rangle = \langle f, (T_p^*)^n g \rangle \longrightarrow 0 \quad \text{for all } f, g \in H^2(\mathcal{P}),
\]
i.e., $T_p^n f \to 0$ weakly for every $f \in H^2(\mathcal{P})$. It follows from the compactness of $T_{\varphi}$ that $\underset{n}{\lim}\| T_{\varphi} T_p^n f \| \longrightarrow 0$ for all $f \in H^2(\mathcal{P})$. For arbitrary $f, g \in H^2(\mathcal{P})$, we have
\begin{align*}
\langle T_{\varphi} f, g \rangle
= \langle (T_p^*)^n T_{\varphi} T_p^n f, g \rangle 
= \langle T_{\varphi} T_p^n f, T_p^n g \rangle 
\le \| T_{\varphi} T_p^n f \| \cdot \| T_p^n g \| 
= \| T_{\varphi} T_p^n f \| \cdot \| g \|,
\end{align*}
which converges to $0$ as $n \to \infty$. Therefore, it follows that $T_{\varphi} = 0$.
\end{proof}

Next, we determine the joint reducing subspaces of $T_a, T_s, T_p$. Recall that a closed subspace $\mathcal M$ of a Hilbert space
$\mathcal H$ is said to be a \textit{joint reducing subspace} for a
family of operators $\{T_\alpha\}_{\alpha\in I}\subseteq
\mathcal B(\mathcal H)$ if $\mathcal M$ reduces each $T_\alpha$, that is, $T_\alpha\mathcal M\subseteq\mathcal M$ and $T_\alpha^*\mathcal M\subseteq\mathcal M$ for every $\alpha\in I$.

\begin{lemma} \label{lem:irreducibility}
 $T_a, T_s, T_p$ acting on $H^2(\mathcal P)$
has no nontrivial joint reducing subspaces. 
\end{lemma}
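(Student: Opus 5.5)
Let $\mathcal M\subseteq H^2(\mathcal P)$ be a closed subspace reducing each of $T_a,T_s,T_p$, and let $P_{\mathcal M}$ be the orthogonal projection onto $\mathcal M$. We will show that $P_{\mathcal M}$ is either $0$ or $I$.

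First, since $\mathcal M$ reduces each coordinate operator, $P_{\mathcal M}$ commutes with $T_a,T_s,T_p$. By Corollary \ref{cor_406}, $P_{\mathcal M}$ is Toeplitz, so $P_{\mathcal M}=T_\varphi$ for some $\varphi\in L^\infty(b\mathcal P,\mu)$. Tracing the proof of Theorem \ref{thm_BH}, $\varphi$ comes from a lift $M=M_\varphi$. Rather than rely on that, I would use directly that $P_{\mathcal M}$ commutes with the coordinate multipliers and also with their adjoints, since $P_{\mathcal M}$ is self-adjoint.

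The key step is to compute $P_{\mathcal M}$ on polynomials. Put $h=P_{\mathcal M}1\in H^2(\mathcal P)$. For every polynomial $q$, commutation with $T_a,T_s,T_p$ gives
\[
P_{\mathcal M}q=P_{\mathcal M}\,q(T_a,T_s,T_p)1=q(T_a,T_s,T_p)P_{\mathcal M}1=qh .
\]
Hence $P_{\mathcal M}$ acts on the dense set of polynomials as multiplication by $h$. Since $\|qh\|\le\|q\|$, the standard reproducing kernel argument (as in Theorem \ref{thm:multipliers}) shows that $h$ is a multiplier with $\|h\|_{\infty,\mathcal P}\le 1$, and $P_{\mathcal M}=M_h$ on all of $H^2(\mathcal P)$.

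Next, use self-adjointness. Since $M_h=M_h^*$, the reproducing kernels satisfy
\[
M_h^*k_x=\overline{h(x)}\,k_x,
\]
so $h(x)k_x=\overline{h(x)}k_x$ and $h$ is real-valued on $\mathcal P$. A real-valued holomorphic function on a connected domain is constant, so $h\equiv c$. Since $P_{\mathcal M}=cI$ is a projection, $c^2=c$, which forces $c\in\{0,1\}$. Therefore $\mathcal M=\{0\}$ or $\mathcal M=H^2(\mathcal P)$.

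The main obstacle is justifying that $P_{\mathcal M}=M_h$ on all of $H^2(\mathcal P)$ and not merely on polynomials. This follows from the bounded point evaluations of Theorem \ref{thm:hol-realization}. If polynomials $q_n\to f$ in $H^2(\mathcal P)$, then $q_nh=P_{\mathcal M}q_n\to P_{\mathcal M}f$ in norm, and therefore pointwise on $\mathcal P$. At the same time, $q_n(x)h(x)\to f(x)h(x)$ for each $x\in\mathcal P$. Hence $P_{\mathcal M}f=hf$ pointwise, and the two functions agree as elements of $H^2(\mathcal P)$. Connectedness of $\mathcal P$, which is the image of the connected unit ball of $M_2(\mathbb C)$, makes the constancy argument valid.
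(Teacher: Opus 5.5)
Your proof is correct, but it takes a different route from the paper's.

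\textbf{The paper's route.} The paper uses that $P_{\mathcal M}$ commutes with the adjoints $T_a^*,T_s^*,T_p^*$. Using the orthogonal weighted-homogeneous decomposition $H^2(\mathcal P)=\bigoplus_n\mathcal Q_n$, it shows that the joint kernel $\ker T_a^*\cap\ker T_s^*\cap\ker T_p^*$ is exactly $\mathbb C1$. Since $P_{\mathcal M}1$ lies in this kernel, $P_{\mathcal M}1=c1$, and cyclicity of $1$ gives $P_{\mathcal M}=cI$.

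\textbf{Your route.} You never use the adjoints or the joint kernel. Commutation with $T_a,T_s,T_p$ and bounded point evaluations identify $P_{\mathcal M}$ with $M_h$, where $h=P_{\mathcal M}1$. Self-adjointness, the reproducing kernels and connectedness of $\mathcal P$ then force $h$ to be a real, hence constant, holomorphic function.

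\textbf{What each buys.} The paper's argument is graded and purely Hilbert-space theoretic, and it transfers verbatim to any doubly commuting $X$ (its next theorem). Yours relies on the function-theoretic structure (RKHS, holomorphy). In exchange it gives a by-product: every operator commuting with $T_a,T_s,T_p$ is $M_h$ with $h=X1\in H^\infty(\mathcal P)$, which sharpens Corollary \ref{cor_406}. It also covers doubly commuting $X$: apply it to the self-adjoint operators $X+X^*$ and $i(X-X^*)$, both of which commute with $T_a,T_s,T_p$.

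\textbf{A small slip.} From $M_h=M_h^*$ you obtain $h\,k_x=\overline{h(x)}\,k_x$ as functions, not $h(x)k_x=\overline{h(x)}k_x$. Your conclusion still follows: evaluate at $x$ and use $k_x(x)=\|k_x\|^2>0$ (recall $\langle 1,k_x\rangle=1$). Alternatively, note that $\langle M_hk_x,k_x\rangle=h(x)\|k_x\|^2$ must be real.

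\textbf{An unneeded step.} The bound $\|h\|_{\infty,\mathcal P}\le 1$ via Theorem \ref{thm:multipliers} is not needed, and citing it before $M_h$ is known to act on all of $H^2(\mathcal P)$ is slightly out of order. Your final paragraph already shows directly that $hf=P_{\mathcal M}f\in H^2(\mathcal P)$ for every $f$.
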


\begin{proof}
Let $\mathcal M$ be a joint reducing subspace for
$(T_a,T_s,T_p)$ and let $P_{\mathcal M}$ denote the orthogonal
projection of $H^2(\mathcal P)$ onto $\mathcal M$. Since $\mathcal M$
reduces each of $T_a,T_s$ and $T_p$, we have that
$P_{\mathcal M}T_\nu=T_\nu P_{\mathcal M}$ and
$P_{\mathcal M}T_\nu^*=T_\nu^*P_{\mathcal M}$ whenever $\nu\in\{a,s,p\}$. Recall that the constant function $1$ is the degree-zero homogeneous component of $H^2(\mathcal P)$. Since multiplication by each of $a,s,p$ raises the $(1,1,2)$-homogeneous degree, the orthogonality of the homogeneous decomposition gives
\[
T_a^*1=T_s^*1=T_p^*1=0.
\]
Let $g \in \ker T_a^*\cap\ker T_s^*\cap\ker T_p^*$. Then $\langle g,a^kf\rangle
=\langle (T_a^*)^kg,f\rangle=0$ and similarly
$
\langle g,sf\rangle=\langle g,pf\rangle=0$ for
every $f\in H^2(\mathcal P)$ and $k \geq 1$. Consequently, 
$
\left\langle
g,a^\alpha s^\beta p^\gamma f
\right\rangle=0
$
for every $\alpha,\beta,\gamma\geq0$ with
$\alpha+\beta+\gamma>0$ and every $f\in H^2(\mathcal P)$.
In particular, taking $f=1$, we see that $g$ is orthogonal to every
nonconstant monomial. Write the weighted homogeneous decomposition of $g \in H^2(\mathcal P)$ as
\[
g=\sum_{n=0}^{\infty}g_n,
\qquad g_n\in\mathcal Q_n,
\]
where $\mathcal Q_n=\mathrm{span}\{a^\alpha s^\beta p^\gamma: \alpha, \beta, \gamma \geq 0, \alpha+\beta+2\gamma=n\}$ for $n \geq 1$. Then $g\perp\mathcal Q_n$ for every $n\geq1$.
By the orthogonality of the weighted homogeneous decomposition,
$0=\langle g,g_n\rangle=\|g_n\|^2$ and thus $g_n=0$ for every $n\geq1$. Hence, $g=g_0\in\mathcal Q_0=\mathbb C1$.
Since $1\in\ker T_a^*\cap\ker T_s^*\cap\ker T_p^*$, we have that
\[
\ker T_a^*\cap\ker T_s^*\cap\ker T_p^*
=\mathbb C1.
\]
Set $h=P_{\mathcal M}1$. Using the commutativity of $P_{\mathcal M}$ with the adjoints and the equalities $T_a^*1=T_s^*1=T_p^*1=0$, it follows that $T_\nu^*h
=T_\nu^*P_{\mathcal M}1
=P_{\mathcal M}T_\nu^*1
=0$ whenever $\nu\in\{a,s,p\}$. Hence, $h$ belongs to the joint kernel and so, $P_{\mathcal M}1=c1$ for some $c\in\mathbb C$. Since $P_{\mathcal M}$ commutes with $T_a,T_s$ and $T_p$, it commutes
with every polynomial in these multiplication operators. Therefore,
for every $q\in\mathbb C[a,s,p]$,
\[
P_{\mathcal M}q
=P_{\mathcal M}q(T_a,T_s,T_p)1
=q(T_a,T_s,T_p)P_{\mathcal M}1
=cq.
\]
Since the polynomial algebra is dense in $H^2(\mathcal P)$ and
$P_{\mathcal M}$ is bounded, we have that $P_{\mathcal M}=cI$ on $H^2(\mathcal P)$. As $P_{\mathcal M}$ is an orthogonal projection, it follows that either $P_{\mathcal M}=0$ or $P_{\mathcal M}=I$. Therefore, $\mathcal M=\{0\}$ or $\mathcal M=H^2(\mathcal P)$.
\end{proof}

The proof of Lemma \ref{lem:irreducibility} can be applied to any operator that doubly commutes with the coordinate multipliers $T_a, T_s, T_p$. The only modification is to replace the
orthogonal projection $P_{\mathcal M}$ by an arbitrary operator
$X\in\mathcal B(H^2(\mathcal P))$ and to use $XT_\nu^*=T_\nu^*X$ for  $\nu\in\{a,s,p\}$. In particular, if $h=X1$, then
$
T_\nu^*h
=T_\nu^*X1
=XT_\nu^*1
=0
$
for $\nu\in\{a,s,p\}$. The proof of Lemma \ref{lem:irreducibility} then gives $X1=c1$ for some $c\in\mathbb C$. By continuity arguments, we arrive at the following result.

\begin{theorem}
If $X\in\mathcal B(H^2(\mathcal P))$ doubly commutes with the coordinate multipliers $T_a,T_s$ and $T_p$ acting on $H^2(\mathcal P)$, then $X=cI$ for some $c\in\mathbb C$.
\end{theorem}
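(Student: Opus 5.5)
The plan is to run the argument of Lemma \ref{lem:irreducibility} with the projection $P_{\mathcal M}$ replaced by $X$. Recall that $X$ doubly commuting with $T_a,T_s,T_p$ means
\[
XT_\nu=T_\nu X \quad\text{and}\quad XT_\nu^*=T_\nu^*X \qquad (\nu\in\{a,s,p\}).
\]

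\textbf{Step 1: $X$ maps $1$ into the joint kernel.} The proof of Lemma \ref{lem:irreducibility} shows two things. First, $T_a^*1=T_s^*1=T_p^*1=0$, because multiplication by $a,s,p$ raises the weighted degree and the decomposition \eqref{orth} is orthogonal. Second,
\[
\ker T_a^*\cap\ker T_s^*\cap\ker T_p^*=\mathbb C1 .
\]
Put $h=X1$. For each $\nu\in\{a,s,p\}$ the second commutation relation gives
\[
T_\nu^*h=T_\nu^*X1=XT_\nu^*1=0,
\]
so $h$ lies in the joint kernel. Hence $X1=c1$ for some $c\in\mathbb C$.

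\textbf{Step 2: $X=cI$ on polynomials.} Only the first commutation relation is needed here. Since $X$ commutes with $T_a,T_s,T_p$, it commutes with $q(T_a,T_s,T_p)$ for every $q\in\mathbb C[a,s,p]$. Also $q(T_a,T_s,T_p)1=q$, because $T_a,T_s,T_p$ are restrictions of the multiplication operators. Therefore
\[
Xq=Xq(T_a,T_s,T_p)1=q(T_a,T_s,T_p)X1=cq .
\]

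\textbf{Step 3: extend by density.} The polynomials are dense in $H^2(\mathcal P)$ by definition, and $X-cI$ is bounded and vanishes on them. Hence $X=cI$.

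Every step is supplied by earlier results. The only point that needs care is identifying the joint kernel as $\mathbb C1$, which Lemma \ref{lem:irreducibility} already proves. So no step should present a real obstacle.
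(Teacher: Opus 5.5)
Your proposal is correct and follows the paper's argument: replace $P_{\mathcal M}$ by $X$ in the proof of Lemma~\ref{lem:irreducibility}, and use $XT_\nu^*=T_\nu^*X$ to put $X1$ in the joint kernel $\mathbb C1$. Then use $XT_\nu=T_\nu X$ to get $Xq=cq$ for every polynomial $q$, and finish by density, which is what the paper calls ``continuity arguments.''
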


Classically, Toeplitz operators on the Hardy space over the unit disc \cite{BrownHalmos1964} are characterized in terms of the coordinate multiplication operator, which is the canonical isometry associated with the disc. This viewpoint naturally leads to the study of Toeplitz operators on Hardy spaces over more general domains by replacing the coordinate multiplication operator with the canonical isometric tuple associated with the underlying domain. Such generalizations have been carried out, for instance, for the Euclidean unit ball \cite{DavieJewell1977} and the symmetrized polydisc \cite{DasSau2021}. Motivated by these, we put forth the following definition. 

\begin{definition}
Let $(A, S, P)$ be a $\mathcal P$-isometry acting on a Hilbert space $\mathcal H$. We say that $T \in \mathcal B(\mathcal H)$ satisfies the Brown-Halmos relations with respect to $(A, S, P)$ if 
\[
P^*TP=T, \quad S^*TP=TS \quad \text{and} \quad A^*TA+\frac{1}{4}S^*TS=T.
\]
\end{definition}

Theorem \ref{thm_BH} tells us that a bounded operator $T$ on $H^2(\mathcal P)$ is Toeplitz if and only if $T$ satisfies the Brown-Halmos relations with respect to the $\mathcal{P}$-isometry $(T_a, T_s, T_p)$. A key ingredient in its proof is Lemma \ref{lem:lifting}, which is a commutant lifting type result. We conclude this section with the following generalization of this lifting result.

\begin{theorem}[Commutant lifting theorem]
Let $(T_1, T_2, T_3)$ be a $\mathcal P$-isometry on a Hilbert space
$\mathcal H$ and let $(N_1, N_2, N_3)$ be its minimal $\mathcal P$-unitary extension acting on a Hilbert space $\mathcal K \supseteq \mathcal H$. Then $T \in \mathcal B(\mathcal H)$ satisfies the Brown-Halmos
relations with respect to $(T_1, T_2, T_3)$ if and only if  there exists a unique operator $N \in \mathcal B(\mathcal K)$ such that
\[
T=P_{\mathcal H}N|_{\mathcal H}, \quad \|N\|=\|T\| \quad \text{and} \quad NN_j=N_jN \ \ (1 \leq j \leq 3).
\]
\end{theorem}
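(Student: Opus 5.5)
The plan is to copy the averaging argument of Lemma \ref{lem:lifting}, with $L^2(b\mathcal P,\mu)$ replaced by $\mathcal K$ and the multiplication triple replaced by the minimal $\mathcal P$-unitary extension $(N_1,N_2,N_3)$, and to use minimality for uniqueness.

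\textbf{Sufficiency.} Suppose $N$ commutes with $N_1,N_2,N_3$ and $T=P_{\mathcal H}N|_{\mathcal H}$. Since $(N_1,N_2,N_3)$ is a $\mathcal P$-unitary, Theorem 4.2 of \cite{PalTomar2026} gives
\[
N_1^*N_1+\tfrac14N_2^*N_2=I,\qquad N_2=N_2^*N_3,\qquad N_3^*N_3=I.
\]
Also $\mathcal H$ is invariant under each $N_j$, with $N_j|_{\mathcal H}=T_j$. The computation that preceded the definition of the Brown-Halmos identities then applies verbatim. For example, for $f,g\in\mathcal H$,
\[
\langle T_2^*TT_3f,g\rangle=\langle NN_3f,N_2g\rangle=\langle N_2^*N_3Nf,g\rangle=\langle N_2Nf,g\rangle=\langle TT_2f,g\rangle .
\]
The other two relations follow in the same way.

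\textbf{Necessity (existence).} Put $\widetilde N_1=N_1/\sqrt2$, $\widetilde N_2=N_2/(2\sqrt2)$ and $\widetilde N_3=N_3/\sqrt2$. Adding the three identities above with weights $\frac12,\frac12,\frac12$ shows that $\sum_j\widetilde N_j^*\widetilde N_j=I$. Define $\Phi(R)=\sum_j\widetilde N_j^*R\widetilde N_j$; it is a contractive, WOT-continuous linear map on $\mathcal B(\mathcal K)$.

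Start from $F=T\oplus0$ on $\mathcal K=\mathcal H\oplus\mathcal H^\perp$. Invariance of $\mathcal H$ under each $N_j$, together with the Brown-Halmos relations combined exactly as in Lemma \ref{lem:lifting}, shows that $\Phi$ maps compressions of $T$ to compressions of $T$. Let $N$ be a WOT cluster point of the Ces\`aro means $\frac1m\sum_{k=1}^m\Phi^k(F)$. Then $N$ compresses to $T$, satisfies $\|N\|\le\|T\|$, and satisfies $\Phi(N)=N$. Since $\|T\|\le\|N\|$ holds for any dilation, we get $\|N\|=\|T\|$.

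The step needing care is passing from $\Phi(N)=N$ to $NN_j=N_jN$. Here we use that $\widetilde N_1,\widetilde N_2,\widetilde N_3$ are commuting normal operators with $\sum\widetilde N_j^*\widetilde N_j=I$. Commutation follows from the definition of a $\mathcal P$-unitary. Normality I would justify from the spectral theorem for $\mathcal P$-unitaries in \cite{PalTomar2026}, which presents them as multiplication by the coordinates on $L^2$ of a spectral measure supported on $b\mathcal P$. Proposition 2.4 of \cite{DavieJewell1977} then yields $N\widetilde N_j=\widetilde N_jN$.

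\textbf{Uniqueness.} This is where minimality enters. Suppose $N$ and $N'$ both commute with the $N_j$ and compress to $T$, and set $D=N-N'$. Then $D$ commutes with each $N_j$, and by Fuglede's theorem also with each $N_j^*$, and $P_{\mathcal H}D|_{\mathcal H}=0$. By minimality, $\mathcal K$ is the closed span of the vectors $N^{*\alpha}h$ with $h\in\mathcal H$, where $N^{*\alpha}$ denotes a monomial in $N_1^*,N_2^*,N_3^*$. Hence
\[
\langle DN^{*\alpha}h,N^{*\beta}k\rangle=\langle N^{\beta}Dh,N^{\alpha}k\rangle=\langle Dh,N^{*\beta}N^{\alpha}k\rangle .
\]
Normality and commutativity let us rewrite $N^{*\beta}N^\alpha=N^\alpha N^{*\beta}$, but $N^{*\beta}$ does not preserve $\mathcal H$, so this is the main obstacle. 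I would first try the alternative route: identify $\mathcal K$ with $L^2(\sigma)$ for the spectral measure $\sigma$ of the unitary extension. The commutant of the normal triple is then $\{M_\varphi\}$ (multiplicity may complicate this, as in Lemma \ref{lem_402}), and we must show that $T_\varphi=0$ forces $\varphi=0$. For that I would apply the same averaging idea: $D$ and $\Phi$-invariance yield $\langle Dx,y\rangle=\lim\langle \Phi^k(D)x,y\rangle$. Choosing $x,y$ in the dense set $N^{*\alpha}\mathcal H$ and using $N_3$ unitary to shift by $N_3^{*n}$, one has $\langle DN_3^{*n}h,N_3^{*n}k\rangle=\langle Dh,k\rangle=0$. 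Since vectors $N_3^{*n}N^{\gamma}h$ exhaust a dense set (I would verify this from minimality together with $N_2=N_2^*N_3$ and $N_1^*=\ldots$), it follows that $D=0$.
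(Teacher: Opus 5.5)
Your sufficiency computation and your existence argument are correct, and they match the paper's proof. The paper also reruns the Ces\`aro averaging of Lemma~\ref{lem:lifting} on $\mathcal K$ and applies Proposition 2.4 of \cite{DavieJewell1977}. One small correction: only the first and third $\mathcal P$-unitary identities enter $\sum_j\widetilde N_j^*\widetilde N_j=I$.

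\textbf{The gap is in uniqueness.} The ``main obstacle'' you identify is not an obstacle, and you stop one line short of the proof. In your identity
\[
\langle DN^{*\alpha}h,N^{*\beta}k\rangle=\langle N^{\beta}Dh,N^{\alpha}k\rangle ,
\]
do not move $N^{\beta}$ across the inner product; commute it past $D$ instead. Since $\mathcal H$ is invariant under $N_1,N_2,N_3$, both $N^{\beta}h$ and $N^{\alpha}k$ lie in $\mathcal H$, so
\[
\langle N^{\beta}Dh,N^{\alpha}k\rangle=\langle DN^{\beta}h,N^{\alpha}k\rangle=\langle P_{\mathcal H}D|_{\mathcal H}\,N^{\beta}h,N^{\alpha}k\rangle=0 .
\]
By minimality, the vectors $N^{*\alpha}h$ with $h\in\mathcal H$ span a dense subspace of $\mathcal K$, so $D=0$. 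The paper does this in two steps. First, $\langle Dh,N^{*\alpha}k\rangle=\langle DN^{\alpha}h,k\rangle=0$ gives $D|_{\mathcal H}=0$. Then $DN^{*\alpha}h=N^{*\alpha}Dh=0$ by Fuglede's theorem.

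\textbf{Your fallback route would fail as stated.} Because $N^{\gamma}h\in\mathcal H$, your density claim says that $\bigcup_{n}N_3^{*n}\mathcal H$ is total in $\mathcal K$. That is, it says the minimal $\mathcal P$-unitary extension is already the minimal unitary extension of $T_3$. This is false for the canonical model $\mathcal H=H^2(\mathcal P)$, $\mathcal K=L^2(b\mathcal P,\mu)$:
\begin{itemize}
\item Every $\overline{p}^{\,n}f$ with $f\in H^2(\mathcal P)$ has only nonnegative Fourier modes under the fibre rotations $R_\eta$, while $\overline{a}$ has mode $-1$.
\item Since $(R_\eta)_*\mu=\mu$, this forces $\overline{a}\perp\overline{p}^{\,n}f$ for all $n$ and $f$.
\item Yet $\|\overline{a}\|^2=\tfrac12\neq0$.
\end{itemize}
The relation $N_2=N_2^*N_3$ recovers $N_2^*$, but nothing expresses $N_1^*$ through $N_3^*$ and analytic monomials. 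That is exactly where your ``$N_1^*=\ldots$'' breaks off.
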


\begin{proof}
The argument for the forward implication is similar to the proof of Lemma  \ref{lem:lifting}. Indeed, one only replaces the canonical pentablock isometry $(T_a,T_s,T_p)$
on $H^2(\mathcal P)$ by the arbitrary pentablock isometry $(T_1,T_2,T_3)$, the triple $(M_a,M_s,M_p)$ on $L^2(b\mathcal P,\mu)$ by the minimal $\mathcal P$-unitary extension $(N_1,N_2,N_3)$ and the ambient space $L^2(b\mathcal P,\mu)$ by the extension space $\mathcal K$. Since the proof of Lemma \ref{lem:lifting} uses only the Brown-Halmos identities, the existence of the minimal $\mathcal P$-unitary extension, the associated commuting normal spherical isometry
\[
\left(\frac{1}{\sqrt2}N_1,\,
\frac{1}{2\sqrt2}N_2,\,
\frac{1}{\sqrt2}N_3\right),
\]
and Proposition 2.4 of \cite{DavieJewell1977}, the same argument carries over verbatim to the present setting. This yields an operator $N\in \mathcal B(\mathcal K)$ with the desired properties. To prove the uniqueness, assume that $Z_1$ and
$Z_2$ are two operators satisfying the conclusions of the
theorem. Set $Z=Z_1-Z_2$. Then $Z$ commutes with $N_1, N_2, N_3$ and $P_{\mathcal H}Z|_{\mathcal H}=0$. We first show that $Zh=0$ for every $h\in\mathcal H$. Indeed, let
$h,k\in\mathcal H$ and let
$\alpha=(\alpha_1,\alpha_2,\alpha_3)\in\mathbb Z_+^3$. Since $Z$
commutes with each $N_i$, we have
\[
\begin{aligned}
\langle Zh,
N_1^{*\alpha_1}N_2^{*\alpha_2}N_3^{*\alpha_3}k\rangle
=
\langle
N_1^{\alpha_1}N_2^{\alpha_2}N_3^{\alpha_3}Zh,
k
\rangle
&=
\langle
ZN_1^{\alpha_1}N_2^{\alpha_2}N_3^{\alpha_3}h,
k
\rangle\\
&=
\langle
P_{\mathcal H}ZN_1^{\alpha_1}N_2^{\alpha_2}N_3^{\alpha_3}h,
k
\rangle\\
&=0,
\end{aligned}
\]
where the last equality holds as $N_1^{\alpha_1}N_2^{\alpha_2}N_3^{\alpha_3}h \in \mathcal H$ and $P_{\mathcal H}Z|_{\mathcal H}=0$. By minimality, 
\[
\mathcal K=\overline{\operatorname{span}}
\left\{
N_1^{*\alpha_1}N_2^{*\alpha_2}N_3^{*\alpha_3}k:
k\in\mathcal H,\,
\alpha_1, \alpha_2, \alpha_3 \in\mathbb Z_+
\right\}.
\]
Hence, $Zh=0$ for every $h\in\mathcal H$. Finally, for every $h\in\mathcal H$ and
$\alpha_1, \alpha_2, \alpha_3 \in\mathbb Z_+^3$,
$
ZN_1^{*\alpha_1}N_2^{*\alpha_2}N_3^{*\alpha_3}h
=
N_1^{*\alpha_1}N_2^{*\alpha_2}N_3^{*\alpha_3}Zh
=0
$
and thus $Z=0$.

\smallskip 

Conversely, suppose there exists a unique operator $N \in \mathcal B(\mathcal K)$ such that $N$ commutes with $N_1, N_2, N_3$, $\|N\|=\|T\|$ and $T=P_{\mathcal H}N|_{\mathcal H}$. Since $(N_1, N_2, N_3)$ is a $\mathcal P$-unitary, we have by Theorem 4.2 in \cite{PalTomar2026} that
\[
N_1^*N_1+\frac{1}{4}N_2^*N_2=I, \quad N_2^*N_3=N_2 \quad \text{and} \quad N_3^*N_3=I.
\]
Since $N$ commutes with $N_1, N_2, N_3$, it follows that $N$ satisfies the Brown-Halmos relations with respect to $(N_1, N_2, N_3)$, that is,
\begin{align}\label{eqn_602}
N_1^*NN_1+\frac{1}{4}N_2^*NN_2=N, \quad N_2^*NN_3=NN_2 \quad \text{and} \quad N_3^*NN_3=N.
\end{align}
Furthermore, with respect to the decomposition $\mathcal{K}=\mathcal{H}\oplus \mathcal{H}^\perp$, we can write
\[
N_1=\begin{bmatrix}
    T_1 & *\\
    0 & *
\end{bmatrix}, \quad N_2=\begin{bmatrix}
    T_2 & *\\
    0 & *
\end{bmatrix}
, \quad N_3=\begin{bmatrix}
    T_3 & *\\
    0 & *
\end{bmatrix}
\quad \text{and} \quad N=\begin{bmatrix}
    T & *\\
    * & *
\end{bmatrix}.
\]
It now directly follows from \eqref{eqn_602} that $T$ satisfies the Brown-Halmos relations with respect to $(T_1, T_2, T_3)$, which completes the proof. 
\end{proof}

\section{Weighted Hardy spaces on the pentablock}\label{sec_05}

\noindent In this section, we consider two different weighted Hardy type spaces on $\mathcal P$, where the weights comes from the Jacobian of the parametrization $\Psi$. Recall the holomorphic map $\Psi:\mathbb D^3\longrightarrow \mathcal P$ given by
\[
\Psi(\xi,\lambda_1,\lambda_2)
=
\left(
\frac{\xi(\lambda_1-\lambda_2)}{2},
\lambda_1+\lambda_2,
\lambda_1\lambda_2
\right).
\]
Its extension to $\mathbb T^3$ parametrizes the distinguished boundary
$b\mathcal P$. Let $
\pi_{\text{symm}}(\lambda_1,\lambda_2)
=
(\lambda_1+\lambda_2,\lambda_1\lambda_2)$
be the symmetrization map. The complex derivative of $\Psi$ is
\[
D\Psi(\xi,\lambda_1,\lambda_2)
=
\begin{pmatrix}
\dfrac{\lambda_1-\lambda_2}{2}
&
\dfrac{\xi}{2}
&
-\dfrac{\xi}{2}
\\[6pt]
0&1&1\\
0&\lambda_2&\lambda_1
\end{pmatrix}.
\]
Consequently, the complex Jacobian of $\Psi$ is
\begin{align*}
J_\Psi(\xi,\lambda_1,\lambda_2)
=
\det D\Psi(\xi,\lambda_1,\lambda_2)
=
\frac{\lambda_1-\lambda_2}{2}
\det
\begin{pmatrix}
1&1\\
\lambda_2&\lambda_1
\end{pmatrix}
=
\frac{(\lambda_1-\lambda_2)^2}{2}.
\end{align*}
Thus, $J_\Psi$ is independent of $\xi$. The Jacobian $J_\Psi$ gives rise to two natural weighted boundary
measures, corresponding to the weights $|J_\Psi|$ and $|J_\Psi|^2$.
We consider precisely these two weights for geometric reasons. First,
the weight $|J_\Psi|$ is directly related to the symmetrization map
$\pi_{\mathrm{symm}}:\mathbb D^2\to\mathbb G_2$. Indeed,
\[
    J_{\pi_{\mathrm{symm}}}(\lambda_1,\lambda_2)
    =\lambda_1-\lambda_2,
    \qquad
    |J_\Psi|
    =\frac12|J_{\pi_{\mathrm{symm}}}|^2.
\]
Thus, the $|J_\Psi|$-weighted construction naturally recovers the
Jacobian-weighted Hardy space of the symmetrized bidisc defined as in \cite{MisraShyamRoyZhang2013} on the slice
$a=0$. This makes $|J_\Psi|$ the natural weight for relating the
pentablock construction to $\mathbb G_2$. The second choice $|J_\Psi|^2$ is the usual Jacobian-squared density
that arises in the change-of-variables formula for proper holomorphic
maps between domains of the same complex dimension. Although $\Psi$
does not satisfy the proper finite-to-one hypotheses in the present
setting, this weight is nevertheless a natural geometric quantity
associated with its complex Jacobian. We therefore restrict our
attention to these two weights. Of course, higher powers of
$|J_\Psi|$ could be introduced formally, but they do not have a geometric interpretation relevant to the present
construction.

\subsection{A Jacobian-weighted Hardy space on the pentablock} For the normalized Haar measure $m_k$ on $\mathbb T^k$, we have
\begin{align*}
\int_{\mathbb T^3}|J_\Psi|\,dm_3
=
\frac{1}{2}
\int_{\mathbb T^2}
|\lambda_1-\lambda_2|^2
\,dm_2(\lambda_1,\lambda_2)
=
1
\end{align*}
since $|\lambda_1-\lambda_2|^2
=
2-\lambda_1\overline{\lambda}_2
-\overline{\lambda}_1\lambda_2$ and the nonconstant Fourier terms have integral zero. One can consider the probability measure on $b\mathcal P$ obtained
by pushing forward the measure $|J_\Psi|\,dm_3$. More precisely, define
$\mu_J$ on $b\mathcal P$ by
\[
\int_{b\mathcal P}F\,d\mu_J
=
\int_{\mathbb T^3}
F\circ\Psi\,|J_\Psi|\,dm_3=\frac12
\int_{\mathbb T^3}
F\circ\Psi\,
|\lambda_1-\lambda_2|^2\,dm_3
\]
for every continuous function $F$ on $b\mathcal P$. Equivalently,
$\mu_J
=
\Psi_*\bigl(|J_\Psi|\,m_3\bigr)$. This gives rise to a natural weighted Hardy space on $\mathcal P$. We define
\[
H_J^2(\mathcal P)
=
\overline{\mathbb C[a,s,p]}^{\,L^2(b\mathcal P,\mu_J)}.
\]
The same arguments as in Section \ref{sec_02} gives a radial realization of the Jacobian-weighted
space $H_J^2(\mathcal P)$. The only modification is the presence of the weight $|J_\Psi|$. Evidently, 
\[
J_\Psi(r\xi,r\lambda_1,r\lambda_2)
=
\frac{r^2|\lambda_1-\lambda_2|^2}{2}
=
r^2J_\Psi(\xi,\lambda_1,\lambda_2).
\]
We denote by $\Psi_r=\delta_r\circ \Psi$ for $0<r<1$. Let $H_{J,\mathrm{rad}}^2(\mathcal P)$ be the space of holomorphic
functions $f$ on $\mathcal P$ for which
\[
\|f\|_{H_{J,\mathrm{rad}}^2(\mathcal P)}
=
\sup_{0<r<1}
\left(
\int_{\mathbb T^3}
|f\circ\Psi_r|^2
|J_\Psi(r\xi,r\lambda_1,r\lambda_2)|
\,dm_3
\right)^{1/2}
<\infty.
\]
Following the proof of Theorem \ref{thm:radial-realization}, with the
same weighted homogeneous decomposition and using the orthogonality of
the spaces $\mathcal Q_n$, one can prove that 
\[
f=\sum_{n\geq0}q_n \quad \text{and} \quad \int_{\mathbb T^3}
|f\circ\Psi_r|^2
|J_\Psi(r\xi,r\lambda_1,r\lambda_2)|
\,dm_3
=
r^2\sum_{n=0}^{\infty}
r^{2n}
\int_{\mathbb T^3}
|q_n\circ\Psi|^2
|J_\Psi|\,dm_3.
\]
Here the additional factor $r^2$ comes precisely from the homogeneity of the Jacobian $J_\Psi$. Since the terms in the series are nonnegative,
\[
\sup_{0<r<1}
r^2\sum_{n=0}^{\infty}
r^{2n}
\int_{\mathbb T^3}
|q_n\circ\Psi|^2|J_\Psi|\,dm_3
=
\sum_{n=0}^{\infty}
\int_{\mathbb T^3}
|q_n\circ\Psi|^2|J_\Psi|\,dm_3.
\]
Thus the radial norm
coincides with the boundary norm, that is,
\[
\|f\|^2_{H_{J,\mathrm{rad}}^2(\mathcal P)}
=
\|f\|^2_{H_J^2(\mathcal P)}=\overset{\infty}{\underset{n=0}{\sum}}\|q_n\|^2_{H_J^2(\mathcal P)}.
\]
The converse implication follows in exactly the same manner as in
Theorem~\ref{thm:radial-realization}, by applying Parseval's identity to
the $(1,1,2)$-quasi-circular action. Hence the polynomial closure
definition and the radial definition determine the same Hilbert space $H^2_J(\mathcal P)$ isometrically.

\begin{proposition}
    The spaces $H^2_J(\mathcal P)$ and $H_{J,\mathrm{rad}}^2(\mathcal P)$ are isometrically isomorphic. 
\end{proposition}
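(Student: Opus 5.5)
I will mirror the proofs of Theorems \ref{thm:hol-realization} and \ref{thm:radial-realization}, with $\mu$ replaced by $\mu_J=\Psi_*(|J_\Psi|m_3)$. The first step is to record the two invariance properties of $\mu_J$. The weight $|J_\Psi|=\tfrac12|\lambda_1-\lambda_2|^2$ is independent of $\xi$ and invariant under $(\lambda_1,\lambda_2)\mapsto(\omega\lambda_1,\omega\lambda_2)$. Since $\Psi(\xi,\omega\lambda_1,\omega\lambda_2)=q_\omega(\Psi(\xi,\lambda_1,\lambda_2))$, it follows that $(q_\omega)_*\mu_J=\mu_J$. Hence the spaces $\mathcal Q_n$ are mutually orthogonal in $L^2(b\mathcal P,\mu_J)$, and
\[
H^2_J(\mathcal P)=\bigoplus_{n\ge0}\mathcal Q_n .
\]
Here the closure of each finite-dimensional $\mathcal Q_n$ is $\mathcal Q_n$ itself. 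We also need $\mu_J$-norms to be genuine norms on polynomials. This holds because $|J_\Psi|>0$ $m_3$-a.e., so $\|q\|_{L^2(\mu_J)}=0$ forces $q\circ\Psi=0$ a.e. on $\mathbb T^3$, hence $q=0$ on $b\mathcal P$ and therefore $q=0$.

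The next step is a version of Lemma \ref{lemma1} for $\mu_J$, which gives the holomorphic realization and the point evaluations. This is the step I expect to be the main obstacle, because the weight vanishes on the diagonal $\lambda_1=\lambda_2$. The argument bounding $\|q\circ\Psi\|_{L^\infty(\mathbb T^3)}$ by $(n+1)^{3/2}\|q\circ\Psi\|_{L^2(m_3)}$ must therefore be coupled with a comparison
\[
\|Q\|_{L^2(m_3)}\le C(n+1)^{N}\|Q\,(\lambda_1-\lambda_2)\|_{L^2(m_3)}
\]
for trigonometric polynomials $Q=q\circ\Psi$ of degree at most $n$ in each variable. I would obtain this comparison as follows. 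Write $Q(\xi,\lambda_1,\lambda_2)\,(\lambda_1-\lambda_2)$ and substitute $\lambda_2=\lambda_1\zeta$, so the factor becomes $\lambda_1(1-\zeta)$. The claim then reduces to a one-variable Remez/Bernstein-type inequality: for a polynomial $g$ of degree $m$ on $\mathbb T$, $\|g\|_{L^2}\le C m\|(1-\zeta)g\|_{L^2}$. This follows by writing $g$ in terms of $h=(1-\zeta)g$ and using the Cauchy–Schwarz inequality on the coefficient recursion. Any polynomial growth in $n$ is harmless, since it is absorbed by $r_K^n$, exactly as in the proof of Theorem \ref{thm:hol-realization}. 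This yields local uniform convergence of $\sum q_n$, injectivity, and bounded point evaluations for $H^2_J(\mathcal P)$.

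For the radial identity, take $f=\sum q_n\in H^2_J(\mathcal P)$. Local uniform convergence gives $f\circ\Psi_r=\sum r^n q_n\circ\Psi$ on $\mathbb T^3$, with $L^2(|J_\Psi|m_3)$-convergence. Orthogonality of the $\mathcal Q_n$ and the homogeneity $|J_\Psi(r\cdot)|=r^2|J_\Psi|$ give
\[
\int_{\mathbb T^3}|f\circ\Psi_r|^2|J_\Psi(r\xi,r\lambda_1,r\lambda_2)|\,dm_3=r^2\sum_n r^{2n}\|q_n\|^2_{L^2(\mu_J)},
\]
and monotone convergence shows that the supremum over $r$ equals $\|f\|^2_{H^2_J}$. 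For the converse, take $f\in H^2_{J,\mathrm{rad}}(\mathcal P)$. Expand $f(\lambda a,\lambda s,\lambda^2 p)=\sum\lambda^n q_n$ by Cauchy's formula as in Theorem \ref{thm:radial-realization}, and apply Parseval in $\omega$ using $(q_\omega)_*\mu_J=\mu_J$ to obtain
\[
\int|f\circ\Psi_r|^2|J_\Psi(r\cdot)|\,dm_3=r^2\sum r^{2n}\|q_n\|^2_{\mu_J}.
\]
Boundedness of this quantity gives $\sum\|q_n\|^2_{\mu_J}<\infty$, so $\sum q_n$ defines an element of $H^2_J(\mathcal P)$. Its holomorphic realization agrees with $f$ by the same one-variable Taylor argument. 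The map $f\mapsto$ its holomorphic realization is therefore a surjective isometry $H^2_J(\mathcal P)\to H^2_{J,\mathrm{rad}}(\mathcal P)$, and in particular the radial space is complete.
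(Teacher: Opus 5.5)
Your proposal is correct and follows the same route as the paper: orthogonality of the $\mathcal Q_n$ in $L^2(b\mathcal P,\mu_J)$ from the quasi-circular invariance of $\mu_J$, the extra factor $r^2$ from the homogeneity of $J_\Psi$ combined with monotone convergence, and Parseval in $\omega$ for the converse. The one place you go beyond the paper is the weighted analogue of Lemma~\ref{lemma1}: the paper's ``same arguments'' tacitly need it, because the weight vanishes on $\lambda_1=\lambda_2$ and the holomorphic realization of $H^2(\mathcal P)$ cannot simply be borrowed, and your coefficient-recursion bound $\|g\|_{L^2}\le (m+1)\|(1-\zeta)g\|_{L^2}$ supplies it correctly (write $m+1$ rather than $m$ so that constant $g$ is covered).
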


The choice of the weight $|J_\Psi|$ is particularly natural from the
perspective of the symmetrized bidisc. For the symmetrization map $\pi_{\text{symm}}$, the associated Jacobian map is $
J_{\pi_{\text{symm}}}(\lambda_1,\lambda_2)
=
\lambda_1-\lambda_2$. Consequently,
\[
|J_\Psi(\xi,\lambda_1,\lambda_2)|
=
\frac{1}{2}
|J_{\pi_{\mathrm{symm}}}(\lambda_1,\lambda_2)|^2,
\]
which suggests a natural connection between the weighted Hardy space associated with $\Psi$ and the Hardy space of the symmetrized bidisc. Recall from \cite{MisraShyamRoyZhang2013} that the Hardy space
$H^2(\mathbb G_2)$ consists of holomorphic functions on $\mathbb G_2$
for which
\[
\|g\|_{H^2(\mathbb G_2)}^2
=
\frac{1}{2}
\sup_{0<r<1}
\int_{\mathbb T^2}
\left|g\circ\pi_{\mathrm{symm},r(\lambda_1, \lambda_2)}\right|^2
\left|J_{\pi_{\mathrm{symm}}}(r\lambda_1,r\lambda_2)\right|^2
\,dm_2,
\]
where $\pi_{\mathrm{symm},r}(\lambda_1,\lambda_2)
=
\left(r(\lambda_1+\lambda_2),r^2\lambda_1\lambda_2\right)$ and $m_2$ is the normalized Haar measure on $\mathbb T^2$. Since
$J_{\pi_{\mathrm{symm}}}(\lambda_1,\lambda_2)
=
\lambda_1-\lambda_2$, we have that $\left|J_{\pi_{\mathrm{symm}}}(r\lambda_1,r\lambda_2)\right|^2
=
r^2|\lambda_1-\lambda_2|^2$. Now let $g\in H^2(\mathbb G_2)$ and define
$f(a,s,p)=g(s,p)$. Then $f\in\operatorname{Hol}(\mathcal P)$ and
$
f\circ\Psi_r=g\circ\pi_{\mathrm{symm},r}$. Consequently,
\begin{align*}
\|f\|_{H_J^2(\mathcal P)}^2
=
\frac{1}{2}
\sup_{0<r<1}
r^2
\int_{\mathbb T^2}
\left|g\circ\pi_{\mathrm{symm},r}\right|^2
|\lambda_1-\lambda_2|^2\,dm_2
=
\|g\|_{H^2(\mathbb G_2)}^2.
\end{align*}
Consequently, we have the following result.

\begin{proposition}
The map
\[
V:H^2(\mathbb G_2)\longrightarrow H_J^2(\mathcal P),
\qquad
(Vg)(a,s,p)=g(s,p),
\]
is an isometry.     
\end{proposition}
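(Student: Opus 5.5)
Since $V$ is linear, it suffices to prove $\|Vg\|_{H_J^2(\mathcal P)}=\|g\|_{H^2(\mathbb G_2)}$ for every $g\in H^2(\mathbb G_2)$. This includes showing that $Vg$ actually lies in $H_J^2(\mathcal P)$. We use the radial realization of $H_J^2(\mathcal P)$, i.e.\ the preceding proposition identifying $H_J^2(\mathcal P)$ isometrically with $H_{J,\mathrm{rad}}^2(\mathcal P)$. This reduces everything to a computation with the radial integrals.

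\emph{Holomorphy.} Let $g\in H^2(\mathbb G_2)$ and put $f=Vg$, i.e.\ $f(a,s,p)=g(s,p)$. We must check that $f\in\operatorname{Hol}(\mathcal P)$. For this we need $(s,p)\in\mathbb G_2$ whenever $(a,s,p)\in\mathcal P$. This holds because $(\operatorname{tr}A,\det A)\in\mathbb G_2$ for $\|A\|<1$, by the definitions of $\mathcal P$ and $\mathbb G_2$ recalled in the introduction. Hence $f=g\circ\mathrm{pr}$ with $\mathrm{pr}(a,s,p)=(s,p)$ is a composition of holomorphic maps.

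\emph{Norm computation.} For $(\xi,\lambda_1,\lambda_2)\in\mathbb T^3$ and $0<r<1$ we have $\Psi_r(\xi,\lambda_1,\lambda_2)=(r\xi(\lambda_1-\lambda_2)/2,\,r(\lambda_1+\lambda_2),\,r^2\lambda_1\lambda_2)$. Therefore
\[
f\circ\Psi_r=g\circ\pi_{\mathrm{symm},r},
\]
which does not depend on $\xi$. The weight satisfies
\[
|J_\Psi(r\xi,r\lambda_1,r\lambda_2)|=\tfrac{r^2}{2}|\lambda_1-\lambda_2|^2=\tfrac12|J_{\pi_{\mathrm{symm}}}(r\lambda_1,r\lambda_2)|^2 .
\]
Integrating out $\xi$ with Fubini, since $m_3=m_1\times m_2$, gives
\[
\int_{\mathbb T^3}|f\circ\Psi_r|^2|J_\Psi(r\,\cdot)|\,dm_3=\frac12\int_{\mathbb T^2}|g\circ\pi_{\mathrm{symm},r}|^2|J_{\pi_{\mathrm{symm}}}(r\lambda_1,r\lambda_2)|^2\,dm_2 .
\]
Taking $\sup_{0<r<1}$ on both sides, the left side is $\|f\|^2_{H^2_{J,\mathrm{rad}}(\mathcal P)}$ and the right side is $\|g\|^2_{H^2(\mathbb G_2)}$, by the definition recalled from \cite{MisraShyamRoyZhang2013}. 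So $f\in H_{J,\mathrm{rad}}^2(\mathcal P)$ with equal norm. By the preceding proposition, $f\in H_J^2(\mathcal P)$ and $\|Vg\|_{H_J^2(\mathcal P)}=\|g\|_{H^2(\mathbb G_2)}$.

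\emph{Main point to verify.} The one delicate step is invoking the identification $H_J^2(\mathcal P)=H^2_{J,\mathrm{rad}}(\mathcal P)$ in the direction radial $\Rightarrow$ polynomial closure. That is, we need that a holomorphic $f$ with finite radial $J$-norm has weighted homogeneous expansion $f=\sum q_n$ with $\sum\|q_n\|^2_{H^2_J}<\infty$, and that its radial norm equals its $H_J^2$-norm. This rests on the Parseval argument for the quasi-circular action, which is invariant for $\mu_J$ because $|J_\Psi|$ is unchanged under $(\xi,\lambda_1,\lambda_2)\mapsto(\xi,\omega\lambda_1,\omega\lambda_2)$. The factor $r^2$ from the homogeneity of $J_\Psi$ is harmless in the supremum.

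For the concrete $f=g\circ\mathrm{pr}$ this can be checked more directly. Expand $g=\sum g_n$ into weighted homogeneous pieces on $\mathbb G_2$. Then $q_n(a,s,p)=g_n(s,p)\in\mathcal Q_n$, and
\[
\|q_n\|^2_{H_J^2}=\tfrac12\int_{\mathbb T^2}|g_n\circ\pi_{\mathrm{symm}}|^2|\lambda_1-\lambda_2|^2\,dm_2 .
\]
So both sides reduce to the same orthogonal series, confirming the isometry.
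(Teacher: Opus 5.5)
Your proposal is correct and follows the paper's route. The paper also uses the identity $f\circ\Psi_r=g\circ\pi_{\mathrm{symm},r}$ together with $|J_\Psi(r\xi,r\lambda_1,r\lambda_2)|=\tfrac12|J_{\pi_{\mathrm{symm}}}(r\lambda_1,r\lambda_2)|^2$ to turn the radial $H_J^2(\mathcal P)$-norm of $Vg$ into the $H^2(\mathbb G_2)$-norm of $g$, with the radial realization of $H_J^2(\mathcal P)$ giving membership. Your additional details (that $(s,p)\in\mathbb G_2$ whenever $(a,s,p)\in\mathcal P$, the Fubini step over $\xi$, and the cross-check via weighted homogeneous expansions) fill in points the paper leaves implicit.
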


Geometrically, the above result is analogous to embedding of the symmetrized bidisc into $\mathcal P$  via the the slice
$
\mathcal P\cap(\{0\}\times\mathbb C^2)
=
\{(0,s,p):(s,p)\in\mathbb G_2\}$. 

\smallskip 

Next, we compare the space $H_J^2(\mathcal P)$ with the unweighted Hardy space $H^2(\mathcal P)$. Recall that the Hardy space norms are defined by
\[
\|f\|_{H^2(\mathcal P)}^2
=
\sup_{0<r<1}\int_{\mathbb T^3}|f\circ\Psi_r|^2\,dm_3 \ \ \text{and} \ \
\|f\|_{H_J^2(\mathcal P)}^2
=
\frac12\sup_{0<r<1}r^2
\int_{\mathbb T^3}
|f\circ\Psi_r|^2
|\lambda_1-\lambda_2|^2\,dm_3.
\]
Consequently, $\|f\|_{H_J^2(\mathcal P)}
\leq
\sqrt{2}\,
\|f\|_{H^2(\mathcal P)}$. Thus the identity map on holomorphic functions induces a continuous
inclusion map $\text{id}: H^2(\mathcal P)
\hookrightarrow
H_J^2(\mathcal P)$. In fact, our next result shows that the inclusion is proper.

\begin{proposition}\label{prop:equiv norms}
The inclusion of $H^2(\mathcal P)$ into $H_J^2(\mathcal P)$ is proper. Moreover, the two norms $\|.\|_{H^2(\mathcal P)}$ and $\|.\|_{H^2_J(\mathcal P)}$ are not equivalent.   
\end{proposition}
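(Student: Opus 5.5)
Since $\|f\|_{H_J^2(\mathcal P)}\le\sqrt2\,\|f\|_{H^2(\mathcal P)}$ has already been shown, the two norms are equivalent if and only if there is a constant $C>0$ with $\|f\|_{H^2(\mathcal P)}\le C\|f\|_{H_J^2(\mathcal P)}$ for all polynomials $f$. Moreover, if the inclusion $H^2(\mathcal P)\hookrightarrow H_J^2(\mathcal P)$ were onto, it would be a continuous bijection between Hilbert spaces. The open mapping theorem would then give equivalence of norms. So both claims follow once I exhibit a sequence of polynomials $f_n$ with
\[
\frac{\|f_n\|_{H^2(\mathcal P)}}{\|f_n\|_{H_J^2(\mathcal P)}}\to\infty .
\]
To be safe, I will also produce an explicit element of $H^2_J(\mathcal P)\setminus H^2(\mathcal P)$. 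It can be built as $\sum c_n f_n$ from suitably normalized and orthogonal $f_n$.

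The weight $|J_\Psi|=\tfrac12|\lambda_1-\lambda_2|^2$ vanishes on the diagonal $\lambda_1=\lambda_2$. Its preimage under $\Psi$ is the set $\{a=0,\ s^2=4p\}$. So I take functions of $(s,p)$ alone that peak near this set. A natural choice is $f_n(a,s,p)=\bigl(\tfrac{s}{2}\bigr)^n$, i.e.
\[
f_n\circ\Psi=\Bigl(\frac{\lambda_1+\lambda_2}{2}\Bigr)^n .
\]
This is a polynomial, hence in both spaces, and it belongs to $\mathcal Q_n$. Therefore both norms are given directly by boundary integrals on $\mathbb T^2$; the $\xi$-variable drops out. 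The numerator is $\int_{\mathbb T^2}|(\lambda_1+\lambda_2)/2|^{2n}\,dm_2$. The denominator is $\tfrac12\int_{\mathbb T^2}|(\lambda_1+\lambda_2)/2|^{2n}|\lambda_1-\lambda_2|^2\,dm_2$.

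Using rotation invariance, I set $\lambda_1=e^{i\theta}\lambda_2$. Then $|(\lambda_1+\lambda_2)/2|^2=\cos^2(\theta/2)$ and $|\lambda_1-\lambda_2|^2=4\sin^2(\theta/2)$. The ratio of squared norms becomes
\[
\frac{\int_0^{2\pi}\cos^{2n}(\theta/2)\,d\theta}{2\int_0^{2\pi}\cos^{2n}(\theta/2)\sin^2(\theta/2)\,d\theta}.
\]
These are Beta/Wallis integrals. With $t=\theta/2$, the numerator is $\int\cos^{2n}t$ and the denominator is $2\int\cos^{2n}t\,\sin^2t$. The denominator equals $2\cdot\frac{1}{2n+2}\int\cos^{2n}t$ by the reduction $\int\cos^{2n}\sin^2=\int\cos^{2n}-\int\cos^{2n+2}$ together with the Wallis ratio $\tfrac{2n+1}{2n+2}$. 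Hence the ratio equals $n+1\to\infty$, and the norms are not equivalent.

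For the explicit non-member, the $f_n$ lie in distinct $\mathcal Q_n$, so they are orthogonal in both spaces. I therefore take
\[
f=\sum_n c_n\frac{f_n}{\|f_n\|_{H^2(\mathcal P)}}\qquad\text{with } c_n=\frac{1}{\sqrt{n+1}}.
\]
Then $\sum c_n^2/(n+1)<\infty$, so the series converges in $H_J^2(\mathcal P)$. On the other hand $\sum c_n^2=\infty$, so $f\notin H^2(\mathcal P)$. Here I use Theorem \ref{thm:radial-realization} and its $J$-analogue: the holomorphic function defined by the series has radial norm equal to the sum of squares of its homogeneous components.

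The main obstacle is verifying orthogonality of the $\mathcal Q_n$ in $L^2(\mu_J)$. This follows because $|J_\Psi|$ is invariant under $(\lambda_1,\lambda_2)\mapsto(\omega\lambda_1,\omega\lambda_2)$, so $\mu_J$ is invariant under the quasi-circular action. The other point needing care is the exact Wallis computation; only the growth $\asymp n$ is actually needed.
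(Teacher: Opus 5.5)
Your proof is correct. For non-equivalence it is the paper's argument: the paper also tests on $s^n$ and obtains $\|s^n\|_{H_J^2(\mathcal P)}^2/\|s^n\|_{H^2(\mathcal P)}^2=1/(n+1)$ from Beta integrals, which is exactly your Wallis computation.

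The difference is in properness. The paper proves it separately, using the closed-form witness $1/(2-s)=\sum_n s^n/2^{n+1}$. It bounds $\|q_n\|_{H^2(\mathcal P)}^2$ below by a multiple of $n^{-1/2}$, using $\cos\theta\ge 1-\theta^2/2$ on $|\theta|\le (n+1)^{-1/2}$. It bounds $\|q_n\|_{H_J^2(\mathcal P)}^2$ above by a multiple of $n^{-3/2}$, using $\cos\theta\le e^{-\theta^2/2}$ and $\sin\theta\le\theta$. It then uses the radial realizations to turn these coefficient sums into (non-)membership.

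You instead get properness for free from the single exact ratio, via the bounded inverse theorem. This needs only that both spaces are complete and that the inclusion is bounded and injective. Injectivity is automatic, e.g.\ because $|J_\Psi|>0$ off the null set $\{\lambda_1=\lambda_2\}$, so $\mu$ and $\mu_J$ are mutually absolutely continuous. Your route avoids all asymptotic estimates, and it even shows that the range of the inclusion is dense but not closed.

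What the paper's route buys is an explicit function, later recorded in Theorem~\ref{thm:comparison-weighted-Hardy}. Your optional witness $\sum_n (n+1)^{-1/2}f_n/\|f_n\|_{H^2(\mathcal P)}$ is also correct but less explicit. Since $\|s^n\|_{H^2(\mathcal P)}^2=\binom{2n}{n}$, it behaves like $\sum_n n^{-1/4}(s/2)^n$. Like the paper's witness, its verification relies on the radial realizations.

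One small slip in your heuristic: $\{a=0,\ s^2=4p\}$ is the image of the diagonal $\{\lambda_1=\lambda_2\}$ under $\Psi$, not its preimage.
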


\begin{proof}  Consider
\[
f(a,s,p)=\frac{1}{2-s}
=\frac12\frac{1}{1-s/2}
=\sum_{n=0}^{\infty}\frac{s^n}{2^{n+1}}.
\]
Since $s^n\in\mathcal Q_n$, this is precisely the weighted homogeneous
decomposition of $f$. We can therefore use the orthogonality of the
spaces $\mathcal Q_n$ to determine its membership in the two Hardy
spaces. Put
\[
q_n(a,s,p)=\frac{s^n}{2^{n+1}},\qquad n\geq0.
\]
Writing $\lambda_1=e^{i(t+\theta)},
\lambda_2=e^{i(t-\theta)}$, we have that $
|\lambda_1+\lambda_2|=2|\cos\theta|$. Consequently,
\[
\|q_n\|_{H^2(\mathcal P)}^2
=
\frac{1}{2^{2n+2}}
\int_{\mathbb T^2}
|\lambda_1+\lambda_2|^{2n}\,dm_2(\lambda_1,\lambda_2)
=
\frac12\int_{-\pi}^{\pi}|\cos\theta|^{2n}\,d\theta.
\]
For $|\theta|\leq 1/\sqrt{n+1}$, note that
$\displaystyle
|\cos\theta|=\cos\theta\geq 1-\frac{\theta^2}{2}
\geq 1-\frac{1}{2(n+1)}$. Therefore
\[
\int_{-\pi}^{\pi}|\cos\theta|^{2n}\,d\theta
\geq
\frac{2}{\sqrt{n+1}}
\left(1-\frac{1}{2(n+1)}\right)^{2n}.
\]
Since
\[
\left(1-\frac{1}{2(n+1)}\right)^{2n}\longrightarrow e^{-1},
\]
there exists $N$ such that the last factor is at least $1/(2e)$ for $n\geq N$. Thus 
\[
\sum_{n \geq N}^{\infty}
\|q_n\|_{H^2(\mathcal P)}^2
\geq
\frac{1}{2e}\sum_{n \geq N}^{\infty}\frac{1}{\sqrt{n+1}}
=\infty
\]
and so, $f=\sum_{n=0}^{\infty}q_n \notin H^2(\mathcal P)$. On the other hand, the Jacobian-weighted Hardy space norm is given by
\begin{align*}
\|q_n\|_{H_J^2(\mathcal P)}^2
=
\frac{1}{2^{2n+2}}\frac12
\int_{\mathbb T^2}
|\lambda_1+\lambda_2|^{2n}
|\lambda_1-\lambda_2|^2
\,dm_2(\lambda_1,\lambda_2)
&=
\frac12\int_{-\pi}^{\pi}
|\cos\theta|^{2n}\sin^2\theta\,d\theta\\
&=
2\int_0^{\pi/2}
\cos^{2n}\theta\,\sin^2\theta\,d\theta.
\end{align*}
For $0\leq\theta<\pi/2$, the inequality
$\cos\theta\leq e^{-\theta^2/2}$ follows from $\log(\cos\theta)\leq-\theta^2/2$. Together with
$\sin\theta\leq\theta$, it follows that
\[
\|q_n\|_{H_J^2(\mathcal P)}^2
\leq
2\int_0^\infty \theta^2e^{-n\theta^2}\,d\theta
=
\frac{2}{n^{3/2}}\int_0^\infty u^2e^{-u^2}\,du
=
\frac{\sqrt{\pi}}{2n^{3/2}}.
\]
Consequently,
\[
\sum_{n=1}^{\infty}
\|q_n\|_{H_J^2(\mathcal P)}^2
\leq
\frac{\sqrt{\pi}}{2}
\sum_{n=1}^{\infty}\frac{1}{n^{3/2}}
<\infty
\]
and thus, $f \in H_J^2(\mathcal P)$. Therefore, $H^2(\mathcal P)\subsetneq H_J^2(\mathcal P)$ and so, $H^2(\mathcal P)$ as a vector space admits two different norms. We show that the two norms $\|.\|_{H^2(\mathcal P)}$ and $\|.\|_{H^2_J(\mathcal P)}$ are not equivalent. Consider the
functions $f_n(a,s,p)=s^n$ for $n \geq 1$. Then
\[
\|f_n\|_{H^2(\mathcal P)}^2
=
\int_{\mathbb T^2}
|\lambda_1+\lambda_2|^{2n}\,dm_2 \quad \text{and} \quad 
\|f_n\|_{H_J^2(\mathcal P)}^2
=
\frac12
\int_{\mathbb T^2}
|\lambda_1+\lambda_2|^{2n}
|\lambda_1-\lambda_2|^2\,dm_2.
\]
Writing $\lambda_1=e^{i(\theta+\phi)}$ and $\lambda_2=e^{i(\theta-\phi)}$, we have that
$\lambda_1+\lambda_2=2e^{i\theta}\cos\phi$ and
$|\lambda_1-\lambda_2|^2=4\sin^2\phi$. Then
\begin{align*}
\frac{\|f_n\|_{H_J^2(\mathcal P)}^2}
{\|f_n\|_{H^2(\mathcal P)}^2}
&=
2\left(\displaystyle\int_0^{2\pi}|\cos\phi|^{2n}\sin^2\phi\,d\phi\right)
\left({\displaystyle\int_0^{2\pi}|\cos\phi|^{2n}\,d\phi}\right)^{-1}
=
\frac{1}{n+1},
\end{align*}
where the last equality follows from the standard beta-integral identity. Thus,
\[
\frac{\|f_n\|_{H_J^2(\mathcal P)}}
{\|f_n\|_{H^2(\mathcal P)}}
=
\frac{1}{\sqrt{n+1}}
\longrightarrow 0.
\]
Thus, the two norms $\|.\|_{H^2(\mathcal P)}$ and $\|.\|_{H^2_J(\mathcal P)}$ are not equivalent.
\end{proof}

\subsection{A Jacobian-squared weighted Hardy space on the pentablock} The preceding construction suggests another Hardy-type space on the
pentablock, obtained by using the square of the modulus of the complex
Jacobian of the map $\Psi$. This is a more customary
Jacobian weight arising in change-of-variables formulas. By above discussion, it follows that
\[
|J_\Psi(\xi,\lambda_1,\lambda_2)|^2
=
\frac14|\lambda_1-\lambda_2|^4 \quad \text{and} \quad 
\int_{\mathbb T^3}|J_\Psi|^2\,dm_3
=
\frac14
\int_{\mathbb T^2}
|\lambda_1-\lambda_2|^4
\,dm_2(\lambda_1,\lambda_2)=\frac{3}{2}.
\]
Define a probability measure $\mu_{J^2}$ on $b\mathcal P$ by
\[
\int_{b\mathcal P}F\,d\mu_{J^2}
=
\frac23
\int_{\mathbb T^3}
F\circ\Psi\,|J_\Psi|^2\,dm_3=\frac16
\int_{\mathbb T^3}
F\circ\Psi\,
|\lambda_1-\lambda_2|^4\,dm_3
\]
for every continuous function $F$ on $b\mathcal P$. Equivalently,
$
\mu_{J^2}
=
(2\slash 3)
\Psi_*\bigl(|J_\Psi|^2m_3\bigr)$. The measure $\mu_{J^2}$ gives rise to the Hardy space 
\[
H_{J^2}^2(\mathcal P)
=
\overline{\mathbb C[a,s,p]}^{\,L^2(b\mathcal P,\mu_{J^2})}.
\]
As before, this definition can be expressed in terms of radial
approximations. Accordingly, one can show that $\|f\|_{H_{J^2}^2(\mathcal P)}^2$ equals
\[
\frac23
\sup_{0<r<1}
\int_{\mathbb T^3}
|f\circ\Psi_r|^2
|J_\Psi(r\xi,r\lambda_1,r\lambda_2)|^2\,dm_3
=
\sup_{0<r<1}
\frac{r^4}{6}
\int_{\mathbb T^3}
|f\circ\Psi_r|^2
|\lambda_1-\lambda_2|^4\,dm_3.
\]
Next, we compare $H_{J^2}^2(\mathcal P)$ with the space $H_J^2(\mathcal P)$. Recall that 
\[
\|f\|_{H_J^2(\mathcal P)}^2
=
\frac12
\sup_{0<r<1}r^2\int_{\mathbb T^3}
|f\circ\Psi_r|^2
|\lambda_1-\lambda_2|^2\,dm_3.
\]
It is not difficult to see that  
\[
\|f\|_{H_{J^2}^2(\mathcal P)}
\leq
\frac{2}{\sqrt{3}}\,
\|f\|_{H_J^2(\mathcal P)}
\]
for every $f \in H^2_J(\mathcal P)$. Thus there is a continuous inclusion of $H_J^2(\mathcal P)$ into the space $
H_{J^2}^2(\mathcal P)$. Consequently, we have the continuous inclusions given by
\[
H^2(\mathcal P)
\hookrightarrow
H^2_J(\mathcal P)
\hookrightarrow
H_{J^2}^2(\mathcal P)
\]
and  $\sqrt{3}\|f\|_{H_{J^2}^2(\mathcal P)} \leq 2\|f\|_{H^2_{J}(\mathcal P)} \leq 2 \sqrt{2} \|f\|_{H^2(\mathcal P)}$ for every $f \in H^2(\mathcal P)$. The last inequality together with Proposition \ref{prop:equiv norms} implies that the two norms $\|.\|_{H^2(\mathcal P)}$ and $\|.\|_{H^2_{J^2}(\mathcal P)}$ are not equivalent on $H^2(\mathcal P)$ as a vector space.  We next show that the inclusion
$H_J^2(\mathcal P)\subset H_{J^2}^2(\mathcal P)$ is proper. For $|s|<2$, consider the map $f(a, s, p)=1\slash (2-s)^{3\slash 2}$. In this case, $\operatorname{Re}(2-s)>0$ and the principal branch of
$(2-s)^{-3/2}$ is holomorphic. Hence, $f$ is holomorphic on $\mathcal P$ and we can write
\[
f(a,s,p)=
\frac{1}{2^{3/2}}
\left(1-\frac{s}{2}\right)^{-3/2}
=
\sum_{n=0}^{\infty}q_n(a,s,p),
\quad 
\text{where}
\quad 
q_n(a,s,p)
=
\frac{(3/2)_n}{2^{n+3/2}n!}\,s^n.
\]
Here $(\alpha)_n$ denotes the Pochhammer symbol. Then
$(3/2)_n=\Gamma(n+3/2) \slash \Gamma(3/2)$, where $\Gamma$ denotes the Gamma function. We first compute the $H_J^2(\mathcal P)$-norm of $s^n$. By definition,
\[
\|s^n\|_{H_J^2(\mathcal P)}^2
=
\frac12
\int_{\mathbb T^2}
|\lambda_1+\lambda_2|^{2n}
|\lambda_1-\lambda_2|^2
\,dm_2(\lambda_1,\lambda_2).
\]
Write
$\lambda_1=e^{it}$ and $\lambda_2=e^{i(t-\theta)}$. Since normalized
Haar measure is invariant under this change of variables, we obtain
\[
\begin{aligned}
\|s^n\|_{H_J^2(\mathcal P)}^2
&=
\frac12
\int_{\mathbb T}
4^n\cos^{2n}\frac{\theta}{2}
\,4\sin^2\frac{\theta}{2}
\,dm_1(\theta)
=
2^{2n+1}
\int_{\mathbb T}
\cos^{2n}\frac{\theta}{2}
\sin^2\frac{\theta}{2}
\,dm_1(\theta).
\end{aligned}
\]
Using the standard beta integral for $a, b>0$, we have
\[
\int_0^{\pi/2}
\sin^{2a-1}t\cos^{2b-1}t\,dt
=
\frac{\Gamma(a)\Gamma(b)}
{2\Gamma(a+b)}
 \quad  \text{and so,} \quad
\|s^n\|_{H_J^2(\mathcal P)}^2
=
\frac{4^n\Gamma(n+1/2)}
{\sqrt{\pi}\Gamma(n+2)}.
\]
We now recall the standard asymptotic formula for ratios of Gamma
functions. For
$\alpha,\beta \geq 0$, there exist constants $c_{\alpha,\beta}, C_{\alpha, \beta}>0$
 and $N\in\mathbb{N}$ such that for every $n \geq N$,
\[
c_{\alpha,\beta}n^{\alpha-\beta} \leq \frac{\Gamma(n+\alpha)}{\Gamma(n+\beta)}
\leq
C_{\alpha,\beta}n^{\alpha-\beta}.
\]
Consequently, for sufficiently large $n$, there exists $c_1, c_2>0$ such that 
\begin{align*}
\|q_n\|_{H_J^2(\mathcal P)}^2
&=
\frac{\Gamma(n+3/2)^2}
{2^{2n+3}\Gamma(3/2)^2\Gamma(n+1)^2}
\frac{4^n\Gamma(n+1/2)}
{\sqrt{\pi}\Gamma(n+2)}\\
&=
\frac{\Gamma(n+3/2)^2\Gamma(n+1/2)}
{8\Gamma(3/2)^2\sqrt{\pi}\,
\Gamma(n+1)^2\Gamma(n+2)}\\
&=\frac{(n+1/2)^2\Gamma(n+1/2)^3}
{8\Gamma(3/2)^2\sqrt{\pi}\,
\Gamma(n+1)^2\Gamma(n+2)}\\
& \geq \frac{n^2}{8\sqrt{\pi}\Gamma(3/2)^2}\left(\frac{\Gamma\left(n+1/2\right)}{\Gamma(n+1)}\right)^2
\left(\frac{\Gamma\left(n+1/2\right)}
{\Gamma(n+2)}\right) \quad [\text{as} (n+1/2)^2 \geq n^2]\\
& \geq \frac{n^2}{8\sqrt{\pi}\Gamma(3/2)^2}(c_1n^{-1/2})^2c_2n^{-3/2} \\ 
& \qquad \qquad \qquad \quad \quad [\text{by asymptotic formula for ratios of Gamma functions}]\\
& \geq \frac{c_1^2c_2}
{8\Gamma(3/2)^2\sqrt{\pi}}\,
n^{-1/2} 
\end{align*}
and so,  $\sum_{n=0}^{\infty}
\|q_n\|_{H_J^2(\mathcal P)}^2
=
\infty$. By the orthogonal decomposition and the radial realization of
$H_J^2(\mathcal P)$, it follows that
$f\notin H_J^2(\mathcal P)$. We now consider the $J^2$-weighted norm. We have
\[
\|s^n\|_{H_{J^2}^2(\mathcal P)}^2
=
\frac16
\int_{\mathbb T^2}
|\lambda_1+\lambda_2|^{2n}
|\lambda_1-\lambda_2|^4
\,dm_2(\lambda_1,\lambda_2)
=
\frac{2}{3}\cdot 4^{n+1}
\int_{\mathbb T}
\cos^{2n}\frac{\theta}{2}
\sin^4\frac{\theta}{2}
\,dm_1(\theta).
\]
Another application of the beta integral gives
\[
\|s^n\|_{H_{J^2}^2(\mathcal P)}^2
=
\frac{2\cdot4^n\Gamma(n+1/2)}
{\sqrt{\pi}\Gamma(n+3)}.
\]
By a similar argument employed to obtain the lower bound of $\|q_n\|^2_{H_J^2(\mathcal P)}$ as above, one can find $C>0$ such that 
\begin{align*}
\|q_n\|_{H_{J^2}^2(\mathcal P)}^2
&=
\frac{2\Gamma(n+3/2)^2\Gamma(n+1/2)}
{8\Gamma(3/2)^2\sqrt{\pi}\,
\Gamma(n+1)^2\Gamma(n+3)} \leq C n^{-3/2}
\end{align*} 
for sufficiently large $n$. Consequently, 
$
\sum_{n=0}^{\infty}
\|q_n\|_{H_{J^2}^2(\mathcal P)}^2
<\infty
$
and so, $f\in H_{J^2}^2(\mathcal P)$. We have therefore obtained the strict inclusion
\[
H^2(\mathcal P) \subsetneq 
H_J^2(\mathcal P)
\subsetneq
H_{J^2}^2(\mathcal P).
\]
Next, we show that  $\|.\|_{H_J^2(\mathcal P)}$ and
$\|.\|_{H_{J^2}^2(\mathcal P)}$ are not equivalent norms on $H^2_J(\mathcal P)$. To see this, let $f_n(a,s,p)=s^n$ for $n \geq 1$. Then
\begin{align*}
    \frac{\|f_n\|^2_{H_{J^2}^2(\mathcal P)}}
{\|f_n\|^2_{H_J^2(\mathcal P)}}
=\displaystyle\left(\frac{2\cdot4^n\Gamma(n+1/2)}
{\sqrt{\pi}\Gamma(n+3)}\right)\displaystyle \left(\frac{4^n\Gamma(n+1/2)}
{\sqrt{\pi}\Gamma(n+2)}\right)^{-1}=\frac{2}{n+2} \to 0
\end{align*}
as $n \to \infty$. Next, we explore the relation of $H^2_{J^2}(\mathcal P)$ with the symmetrized bidisc. Let $g$ be holomorphic on $\mathbb G_2$ and define $\widetilde g(a,s,p)=g(s,p)$. Then $\widetilde g \in \text{Hol}(\mathcal P)$ and $
\widetilde g\circ\Psi_r=g\circ\pi_{\text{symm}, r}$ for $r\in (0, 1)$. Consequently,
\begin{align*}
\|\widetilde g\|_{H_{J^2}^2(\mathcal P)}^2
&=
\frac16 \sup_{0<r<1}r^4
\int_{\mathbb T^2}
|g\circ\pi_{\text{symm}, r}(\lambda_1, \lambda_2)|^2
|\lambda_1-\lambda_2|^4
\,dm_2.
\end{align*}
Thus the restriction of the Jacobian-squared weighted space to the
natural slice $\{0\}\times\mathbb G_2$ is not the usual Hardy space
$H^2(\mathbb G_2)$ defined as in \cite{MisraShyamRoyZhang2013} with the norm
\[
\|g\|_{H^2(\mathbb G_2)}^2
=
\frac{1}{2}
\sup_{0<r<1}r^2
\int_{\mathbb T^2}
\left|g\circ\pi_{\mathrm{symm},r}(\lambda_1, \lambda_2)\right|^2
\left|\lambda_1-\lambda_2\right|^2
\,dm_2.
\]
However, we get a higher-order Jacobian-weighted Hardy norm on
$\mathbb G_2$. This distinction is important. The weight $|J_\Psi|$ is precisely the one for which the natural slice recovers the standard Hardy space of the
symmetrized bidisc, whereas the weight $|J_\Psi|^2$ produces the
fourth-power Jacobian weight $|J_\pi|^4$ on that slice. Putting everything together, we have the following result.

\begin{theorem}\label{thm:comparison-weighted-Hardy}
Let $H^2(\mathcal P)$, $H_J^2(\mathcal P)$ and
$H_{J^2}^2(\mathcal P)$ denote the Hardy type Hilbert spaces associated with the weights $1$, $|J_\Psi|$ and $|J_\Psi|^2$, respectively.
Then the following hold.

\begin{enumerate}[leftmargin=*]
    \item There exist continuous inclusions
    $
    H^2(\mathcal P)
    \hookrightarrow
    H_J^2(\mathcal P)
    \hookrightarrow
    H_{J^2}^2(\mathcal P)$ and   
    \[
    \sqrt{3}\|f\|_{H_{J^2}^2(\mathcal P)} \leq 2\|f\|_{H^2_{J}(\mathcal P)} \leq 2 \sqrt{2} \|f\|_{H^2(\mathcal P)}
    \]
    for every $f \in H^2(\mathcal P)$.
    
    \item The inclusions are proper, that is, $
    H^2(\mathcal P)
    \subsetneq
    H_J^2(\mathcal P)
    \subsetneq
    H_{J^2}^2(\mathcal P)$. In particular, 
    \[
    \frac{1}{(2-s)} \in H_J^2(\mathcal P) \setminus H^2(\mathcal P) \quad \text{and} \quad \frac{1}{(2-s)^{3/2}} \in H_{J^2}^2(\mathcal P) \setminus H_{J}^2(\mathcal P).
    \]
   
    \item None of the pairs of norms corresponding to the three
    spaces is equivalent. Indeed,
    \[
    \frac{\|s^n\|_{H_J^2(\mathcal P)}}
    {\|s^n\|_{H^2(\mathcal P)}}
    =
    \frac{1}{\sqrt{n+1}} \quad \text{and} \quad 
    \frac{\|s^n\|_{H_{J^2}^2(\mathcal P)}}
    {\|s^n\|_{H_J^2(\mathcal P)}}
    =
    \sqrt{\frac{2}{n+2}}.
    \]

    \item The map $V:H^2(\mathbb G_2)\to H_J^2(\mathcal P)$ defined by
    $(Vg)(a,s,p)=g(s,p)$ is an isometry. 
\end{enumerate}
\end{theorem}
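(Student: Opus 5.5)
The statement collects the comparisons made in this section, so I would prove it by assembling them and checking the constants. Throughout I would work with the radial descriptions:
\[
\|f\|_{H^2(\mathcal P)}^2=\sup_{0<r<1}\int_{\mathbb T^3}|f\circ\Psi_r|^2\,dm_3,
\]
\[
\|f\|_{H_J^2(\mathcal P)}^2=\tfrac12\sup_{0<r<1} r^2\int_{\mathbb T^3}|f\circ\Psi_r|^2|\lambda_1-\lambda_2|^2\,dm_3,
\]
\[
\|f\|_{H_{J^2}^2(\mathcal P)}^2=\tfrac16\sup_{0<r<1} r^4\int_{\mathbb T^3}|f\circ\Psi_r|^2|\lambda_1-\lambda_2|^4\,dm_3.
\]
These come from Theorem \ref{thm:radial-realization} and from its weighted analogues established just before the statement, via the proposition identifying $H^2_J(\mathcal P)$ with $H^2_{J,\mathrm{rad}}(\mathcal P)$ and the analogous $J^2$ computation. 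The reason for using them is that all three spaces then become spaces of holomorphic functions on $\mathcal P$ with comparable sup-type norms, so the inclusions are literal inclusions of function spaces.

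\textbf{Part (1).} On $\mathbb T^2$ we have $|\lambda_1-\lambda_2|^2\le 4$ and $r\le 1$. The pointwise bounds on the integrands are
\[
\tfrac12 r^2|\lambda_1-\lambda_2|^2\le 2,
\qquad
\tfrac16 r^4|\lambda_1-\lambda_2|^4\le \tfrac43\cdot\tfrac12 r^2|\lambda_1-\lambda_2|^2 .
\]
Taking suprema over $r$ then gives $\|f\|_{H_J^2}^2\le 2\|f\|_{H^2}^2$ and $\|f\|_{H_{J^2}^2}^2\le \tfrac43\|f\|_{H_J^2}^2$. Multiplying out yields the chain $\sqrt3\|f\|_{H^2_{J^2}}\le 2\|f\|_{H^2_J}\le 2\sqrt2\|f\|_{H^2}$. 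Finiteness of the smaller norm therefore forces finiteness of the larger one, which gives the continuous inclusions.

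\textbf{Part (2).} I would use that $s^n\in\mathcal Q_n$, so $\sum_n c_ns^n$ is exactly the weighted homogeneous decomposition, and orthogonality of the $\mathcal Q_n$ in each space reduces membership to summability of $|c_n|^2\|s^n\|^2$.
\begin{itemize}
\item For $1/(2-s)$: Proposition \ref{prop:equiv norms} already gives $\|q_n\|^2_{H^2}\gtrsim n^{-1/2}$, so the series diverges, while $\|q_n\|^2_{H^2_J}\lesssim n^{-3/2}$, so it converges.
\item For $(2-s)^{-3/2}$: I would use the beta-integral formulas $\|s^n\|^2_{H_J^2}=4^n\Gamma(n+\tfrac12)/(\sqrt\pi\,\Gamma(n+2))$ and $\|s^n\|^2_{H^2_{J^2}}=2\cdot 4^n\Gamma(n+\tfrac12)/(\sqrt\pi\,\Gamma(n+3))$. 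Combining these with $|c_n|^2\asymp 4^{-n}n$ (via $(3/2)_n/n!\asymp n^{1/2}$) gives terms of size $n^{-1/2}$ in $H^2_J$ (divergent) and $n^{-3/2}$ in $H^2_{J^2}$ (convergent).
\end{itemize}
I would also note that $(2-s)^{-3/2}$ is holomorphic on $\mathcal P$ because $|s|<2$ there.

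\textbf{Part (3).} I would compute $\|s^n\|^2_{H^2}=4^n\Gamma(n+\tfrac12)/(\sqrt\pi\,\Gamma(n+1))$ by the same substitution $\lambda_1=e^{it}$, $\lambda_2=e^{i(t-\theta)}$ and the beta integral. Dividing the three closed forms and using $\Gamma(n+2)=(n+1)\Gamma(n+1)$ and $\Gamma(n+3)=(n+2)\Gamma(n+2)$ gives the ratios $1/(n+1)$ and $2/(n+2)$. These tend to $0$, so no constant can bound the larger norm by the smaller one on the smaller space. Non-equivalence of $\|\cdot\|_{H^2}$ and $\|\cdot\|_{H^2_{J^2}}$ follows by multiplying the two ratios.

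\textbf{Part (4).} This is the isometry proposition for $V$, which rests on $f\circ\Psi_r=g\circ\pi_{\mathrm{symm},r}$ and on $\xi$ not appearing in the integrand.

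\textbf{Main obstacle.} I expect the delicate point to be justifying that the three norms can be compared through the radial suprema at all. One needs that the weighted spaces, defined as $L^2$-closures of polynomials, coincide isometrically with the radial spaces of holomorphic functions. In particular, a function such as $(2-s)^{-3/2}$ with square-summable homogeneous components genuinely belongs to the polynomial closure. I would handle this by the weighted versions of Theorems \ref{thm:hol-realization} and \ref{thm:radial-realization}, as sketched before the statement. The remaining work is routine Gamma-function bookkeeping.
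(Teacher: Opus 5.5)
Your proposal is correct and follows essentially the same route as the paper. It uses pointwise bounds on the weights $\tfrac12 r^2|\lambda_1-\lambda_2|^2$ and $\tfrac16 r^4|\lambda_1-\lambda_2|^4$ for (1), the test functions $1/(2-s)$ and $(2-s)^{-3/2}$ analysed through their weighted homogeneous components and Gamma-function asymptotics for (2), the beta-integral closed forms for $\|s^n\|^2$ for (3), and the identity $f\circ\Psi_r=g\circ\pi_{\mathrm{symm},r}$ for (4), with the weighted radial realization taken over from Section~\ref{sec_02} exactly as the paper does.

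The differences are cosmetic. You get non-equivalence of $\|\cdot\|_{H^2(\mathcal P)}$ and $\|\cdot\|_{H^2_{J^2}(\mathcal P)}$ by multiplying the two ratios, whereas the paper uses the inequality chain together with Proposition~\ref{prop:equiv norms}. Also, your phrase ``finiteness of the smaller norm forces finiteness of the larger one'' is literally backwards: it is finiteness of the dominating $H^2(\mathcal P)$-norm that forces finiteness of the $H^2_J(\mathcal P)$-norm.
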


\noindent \textbf{Remarks.} The three Hilbert spaces $H^2(\mathcal P), H^2_J(\mathcal P)$ and $H^2_{J^2}(\mathcal P)$ considered here represent three different choices of boundary measure on $\mathcal P$. The first is the unweighted Hardy space  and the remaining two are Jacobian-weighted Hardy-type
spaces reflecting different aspects of the geometry of $\mathcal P$ and the parametrization $\Psi$. The two Jacobian constructions therefore have different geometric
features. The space $H_J^2(\mathcal P)$, obtained using the weight
$|J_\Psi|$, is distinguished by the fact that its natural
$\{a=0\}$ slice contains an isometric copy of $H^2(\mathbb G_2)$. On the
other hand, $H_{J^2}^2(\mathcal P)$ uses the square of the modulus of the
complex Jacobian and is consequently more directly analogous to the
usual Jacobian-squared measures arising in several complex variables. Among the three spaces considered here, we refer to $H^2(\mathcal P)$ as the Hardy space of the pentablock. This terminology is motivated by the fact that $H^2(\mathcal P)$ is defined directly from the boundary measure $\mu=\Psi_*m_3$ on the distinguished boundary $b\mathcal P$ without introducing an additional weight depending on the parametrization. This is particularly natural in the present
setting, since the boundary parametrization $\Psi$ is not a proper
finite-to-one map. Indeed, although it is generically two-to-one, its
fibres become positive-dimensional on the discriminant set
$\lambda_1=\lambda_2$, where
\[
    \Psi(\xi,\lambda,\lambda)=(0,2\lambda,\lambda^2)
\]
is independent of $\xi$. Thus, unlike the proper finite-to-one
parametrizations occurring in the standard constructions for domains such as $\mathbb G_2$ \cite{MisraShyamRoyZhang2013} and $\mathbb E$ \cite{JainKumarMalPramanick2026}, there is no
change-of-variables principle that forces a Jacobian factor into the boundary measure.

\section{Operators associated with the pentablock via the map $\Psi$}\label{sec_06}

\noindent We studied in Section \ref{sec_03} the map $\Psi: \overline{\mathbb D}^3 \to \overline{\mathcal P}$ at the scalar level, emphasizing its geometric and function-theoretic consequences for $\mathcal P$ and the Hardy space $H^2(\mathcal P)$. In this section, we study the operator-theoretic implications of $\Psi$. To begin with we recall from \cite{PalTomar2026} the notions of pentablock contractions and dilation.

\begin{definition}
    A commuting triple $(A, S, P)$ of operators acting on a Hilbert space $\mathcal H$ is said to be  a \textit{$\mathcal{P}$-contraction} if 
    \[
    \|q(A, S, P)\| \leq \|q\|_{\infty, \overline{\mathcal P}}
    \]
for every $q \in \mathbb{C}[a, s, p]$. We say that a $\mathcal P$-contraction $(A, S, P)$ acting on a Hilbert space $\mathcal H$ dilates to a $\mathcal P$-unitary if there exists a $\mathcal P$-unitary $(N_1, N_2, N_3)$ on a Hilbert space $\mathcal K \supseteq \mathcal H$ such that
  \[
  q(A, S, P)=P_{\mathcal H}q(N_1, N_2, N_3)|_{\mathcal H}
  \]
  for every $q \in \mathbb C[a, s, p]$. Here, $P_{\mathcal H}$ denotes the orthogonal projection of $\mathcal K$ onto $\mathcal H$.
\end{definition}

Various characterizations of $\mathcal P$-unitaries were presented in \cite{JindalKumar2024, PalTomar2026}. As mentioned in Section \ref{sec_02}, $b\mathcal P=\Psi(\mathbb T^3)$. Our next result is a generalization of this result at the level of operators, providing a new description of $\mathcal P$-unitaries. The proof follows an argument analogous to one of the characterizations of unitaries associated with the symmetrized bidisc given in \cite{AglerYoung1999}.

\begin{theorem}\label{thm_P_unitary}
A commuting triple $(N_1, N_2, N_3)$ of operators acting on a Hilbert
space $\mathcal H$ is a $\mathcal P$-unitary if and only if there exist commuting unitaries $U_1,U_2,U_3$ on $\mathcal H$
    such that $\Psi(U_1, U_2, U_3)=(N_1, N_2, N_3)$, i.e., 
    \[
    N_1=\frac{U_1(U_2-U_3)}{2},\qquad
    N_2=U_2+U_3,\qquad
    N_3=U_2U_3.
    \]
\end{theorem}

\begin{proof} Suppose $\underline{N}=(N_1,N_2,N_3)$ is a $\mathcal P$-unitary. By the joint spectral theorem, there is a unique spectral measure $E$ on $\sigma_T(\underline N)$ such that
\[
N_j=\int_{\sigma_T(\underline N)} z_j\,dE(z),\qquad j=1,2,3.
\]
The parametrization map $\Psi:\mathbb T^3 \to b\mathcal P$ is a continuous surjective map. Since $\mathbb T^3$ and $b\mathcal P$ are compact spaces,
$\Psi$ admits a Borel measurable right inverse $\rho:b\mathcal P\to \mathbb T^3$ satisfying $\Psi\circ\rho=\operatorname{id}_{b\mathcal P}$. Write $\rho=(\rho_1,\rho_2,\rho_3)$ and $U_j=\rho_j(N_1, N_2, N_3)$ for $1 \leq j \leq 3$, where $U_j$ is defined via the Borel functional calculus. Since $|\rho_j|=1$ on $\sigma_T(\underline N)$, each $U_j$ is a unitary. Moreover, $U_1, U_2, U_3$ commute as they belong to the abelian von Neumann algebra generated by $N_1, N_2, N_3$. By Borel functional calculus,
\[
(N_1, N_2, N_3)
=
\left(
\left(\frac{\rho_1(\rho_2-\rho_3)}{2}\right)(\underline N), (\rho_2+\rho_3)(\underline N), (\rho_2\rho_3)(\underline N)
\right)=\Psi(U_1, U_2, U_3).
\]
Conversely, let $U_1,U_2,U_3$ be commuting unitaries and $(N_1, N_2, N_3)=\Psi(U_1, U_2, U_3)$. Then $N_1, N_2, N_3$ are commuting normal operators. By spectral mapping theorem,
$
\sigma_T(N_1,N_2,N_3)
=
\Psi\bigl(\sigma_T(U_1,U_2,U_3)\bigr)
\subseteq
\Psi(\mathbb T^3)
=
b\mathcal P,
$
which completes the proof. 
\end{proof}

The following result is a simple application of Theorem \ref{thm_P_unitary}.

\begin{corollary}
    Let $(V_1, V_2, V_3)$ be a commuting triple of isometries acting on a Hilbert space $\mathcal H$. Then the triple $\Psi(V_1, V_2, V_3)$ is a $\mathcal P$-isometry.
\end{corollary}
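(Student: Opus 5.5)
The plan is to dilate the commuting isometries to commuting unitaries, apply $\Psi$ upstairs, and invoke Theorem \ref{thm_P_unitary}. By definition, a $\mathcal P$-isometry is the restriction of a $\mathcal P$-unitary to a joint invariant subspace, so we need a $\mathcal P$-unitary $(N_1,N_2,N_3)$ on some $\mathcal K\supseteq\mathcal H$ that leaves $\mathcal H$ invariant and restricts there to $\Psi(V_1,V_2,V_3)$.

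First, I would invoke the classical fact that every commuting tuple of isometries has a commuting unitary extension (Itô's theorem). Concretely, there is a Hilbert space $\mathcal K\supseteq\mathcal H$ and commuting unitaries $U_1,U_2,U_3$ on $\mathcal K$ with $U_j\mathcal H\subseteq\mathcal H$ and $U_j|_{\mathcal H}=V_j$ for $1\leq j\leq 3$. This is the only nontrivial input, and it is the step I expect to be the main obstacle. Unlike Andô's theorem, which concerns unitary dilations of contractions, a genuine extension of commuting isometries is needed here, and for arbitrarily many commuting isometries such an extension always exists. If a self-contained argument is wanted, one can build it directly. Extend $V_1$ to a unitary via the minimal unitary extension on the inductive limit $\varinjlim(\mathcal H,V_1)$. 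The operators $V_2,V_3$ extend to commuting isometries there that commute with the extension of $V_1$. Iterating this over the three operators gives commuting unitaries.

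Next, set $(N_1,N_2,N_3)=\Psi(U_1,U_2,U_3)$, that is,
\[
N_1=\tfrac12U_1(U_2-U_3),\qquad N_2=U_2+U_3,\qquad N_3=U_2U_3 .
\]
By Theorem \ref{thm_P_unitary}, this commuting triple is a $\mathcal P$-unitary on $\mathcal K$.

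Finally, because each $U_j$ leaves $\mathcal H$ invariant and restricts to $V_j$, every polynomial in $U_1,U_2,U_3$ leaves $\mathcal H$ invariant. Its restriction to $\mathcal H$ is the same polynomial evaluated at $V_1,V_2,V_3$. Hence $\mathcal H$ is jointly invariant for $(N_1,N_2,N_3)$, and
\[
N_j|_{\mathcal H}=\Psi_j(V_1,V_2,V_3), \qquad 1\leq j\leq 3 .
\]
The restricted triple commutes because the $V_j$ commute. Therefore $\Psi(V_1,V_2,V_3)$ is the restriction of a $\mathcal P$-unitary to a joint invariant subspace, so it is a $\mathcal P$-isometry.
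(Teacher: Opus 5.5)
Your proposal is correct and is essentially the paper's proof. The paper likewise extends the commuting isometries to commuting unitaries, citing this as a well-known fact. It then applies Theorem~\ref{thm_P_unitary} to conclude that $\Psi(U_1,U_2,U_3)$ is a $\mathcal P$-unitary, and notes that its restriction to the invariant subspace $\mathcal H$ is $\Psi(V_1,V_2,V_3)$.
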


\begin{proof}
It is a well-known fact that every commuting $n$-tuple of isometries admits an extension to a commuting $n$-tuple of unitaries. Let $(U_1, U_2, U_3)$ be a commuting tuple of unitaries on a Hilbert space $\mathcal K \supseteq \mathcal H$ such that $\mathcal H$ is invariant under $U_1, U_2, U_3$ and $(V_1, V_2, V_3)=(U_1|_{\mathcal H}, U_2|_{\mathcal H}, U_3|_{\mathcal H})$. By Theorem \ref{thm_P_unitary}, $\Psi(U_1, U_2, U_3)$ is a $\mathcal P$-unitary. Evidently, $\Psi(V_1, V_2, V_3)=\Psi(U_1, U_2, U_3)|_{\mathcal H}$. The proof is complete.
\end{proof}

The direct construction of $\mathcal P$-contractions is nontrivial. The following result gives a canonical way to present such examples from the map $\Psi$. Also, it presents a class of $\mathcal P$-contractions that admits dilation to $\mathcal P$-unitaries.

\begin{proposition}
    Let $(T_1, T_2, T_3)$ be a commuting triple of contractions on a Hilbert space $\mathcal H$ such that $\overline{\mathbb D}^3$ is a spectral set for $(T_1, T_2, T_3)$. Then $\Psi(T_1, T_2, T_3)$ is a $\mathcal P$-contraction. In particular, if $(T_1, T_2, T_3)$ admits a dilation to a commuting triple of unitaries, then $\Psi(T_1, T_2, T_3)$ admits a dilation to a $\mathcal P$-unitary.
\end{proposition}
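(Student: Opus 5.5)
The plan is to reduce everything to the polynomial identity $(q\circ\Psi)(T_1,T_2,T_3)=q(\Psi(T_1,T_2,T_3))$ and then apply the spectral set hypothesis to the polynomial $q\circ\Psi$. First, $(A,S,P)=\Psi(T_1,T_2,T_3)$, with
\[
A=\tfrac12T_1(T_2-T_3),\qquad S=T_2+T_3,\qquad P=T_2T_3,
\]
is a commuting triple, since each entry is a polynomial in the commuting $T_j$. For $q\in\mathbb C[a,s,p]$, the map $Q=q\circ\Psi$ is a polynomial in three variables. Because the polynomial functional calculus for commuting operators is an algebra homomorphism, we have $q(A,S,P)=Q(T_1,T_2,T_3)$.

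For the contraction estimate, we use that $\overline{\mathbb D}^3$ is a spectral set, which in particular gives $\|Q(T_1,T_2,T_3)\|\le\|Q\|_{\infty,\overline{\mathbb D}^3}$. Then
\[
\|Q\|_{\infty,\overline{\mathbb D}^3}=\|q\|_{\infty,\Psi(\overline{\mathbb D}^3)}\le\|q\|_{\infty,\overline{\mathcal P}},
\]
where the inequality follows from $\Psi(\overline{\mathbb D}^3)\subseteq\overline{\mathcal P}$ in Proposition \ref{prop:Psi-interior}. Hence $\|q(A,S,P)\|\le\|q\|_{\infty,\overline{\mathcal P}}$, so $\Psi(T_1,T_2,T_3)$ is a $\mathcal P$-contraction.

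For the dilation statement, suppose $(U_1,U_2,U_3)$ is a commuting unitary triple on $\mathcal K\supseteq\mathcal H$ with
\[
T^\alpha=P_{\mathcal H}U^\alpha|_{\mathcal H}\quad\text{for all multi-indices }\alpha .
\]
By linearity this gives $R(T)=P_{\mathcal H}R(U)|_{\mathcal H}$ for every polynomial $R$. Applying it to $R=q\circ\Psi$ and using the homomorphism identity on both sides yields
\[
q(\Psi(T))=P_{\mathcal H}\,q(\Psi(U))|_{\mathcal H}.
\]
By Theorem \ref{thm_P_unitary}, $\Psi(U_1,U_2,U_3)$ is a $\mathcal P$-unitary, so it is the required dilation.

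The main point to check is the interpretation of "dilation": it must be understood as a power, i.e.\ polynomial, dilation, not merely a dilation of each $T_j$ separately, since the composition argument needs all mixed monomials. If only an Andô-type dilation is given, I would state this convention explicitly. Everything else is formal once the homomorphism property of the polynomial calculus is recorded.
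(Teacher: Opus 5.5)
Your proposal is correct and follows the paper's proof essentially step for step. You use the identity $q(\Psi(T_1,T_2,T_3))=(q\circ\Psi)(T_1,T_2,T_3)$, bound it by the spectral set hypothesis together with $\Psi(\overline{\mathbb D}^3)\subseteq\overline{\mathcal P}$, and then compress $q\circ\Psi$ of the unitary dilation and invoke Theorem \ref{thm_P_unitary}. Your closing caveat is harmless but unnecessary: a unitary dilation of a commuting triple is by convention a power dilation $T^\alpha=P_{\mathcal H}U^\alpha|_{\mathcal H}$ (And\^o's dilation of a commuting pair is also of this type), and this is exactly what the paper uses.
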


\begin{proof}
 Let $q \in \mathbb C[a, s, p]$. Then $q\circ \Psi$ is a holomorphic function on $\overline{\mathbb D}^3$. Since $(T_1, T_2, T_3)$ has $\overline{\mathbb D}^3$ as a spectral set and $\Psi(\overline{\mathbb D}^3) \subseteq \overline{\mathcal P}$, it follows that
    \begin{align*}
        \|q(\Psi(T_1, T_2, T_3))\|=\|(q\circ \Psi)(T_1, T_2, T_3)\| \leq \|q\circ \Psi\|_{\infty, \overline{\mathbb D}^3} \leq \|q\|_{\infty, \overline{\mathcal P}}.
    \end{align*}
Consequently, $\Psi(T_1, T_2, T_3)$ is a $\mathcal P$-contraction. Suppose $(T_1, T_2, T_3)$ admits a dilation to a commuting tuple of unitaries $(U_1, U_2, U_3)$ on a Hilbert space $\mathcal K \supseteq \mathcal H$. Let $q \in \mathbb C[a, s, p]$ and let $g=q \circ \Psi$. Then $g \in \text{Hol}(\overline{\mathbb D}^3)$ and 
\[
q(\Psi(T_1, T_2, T_3))=g(T_1, T_2, T_3)=P_{\mathcal H}g(U_1, U_2, U_3)|_{\mathcal H}=P_{\mathcal H}q\left(\Psi(U_1, U_2, U_3)\right)
|_{\mathcal H}.
\]
By Theorem \ref{thm_P_unitary}, $\Psi(U_1, U_2, U_3)$ is a $\mathcal P$-unitary, which completes the proof.
\end{proof}

We have proved in Proposition \ref{prop:Psi-interior} that $\Psi(\overline{\mathbb D}^3)$ is a strict subset of $\overline{\mathcal P}$. Consequently, the converse to the above proposition is not true, that is, not every $\mathcal P$-contraction is of the form $\Psi(T_1, T_2, T_3)$, where $(T_1, T_2, T_3)$ is a commuting triple of contractions with $\overline{\mathbb D}^3$ as its spectral set.

\section*{Appendix}
\noindent Recall from Section \ref{sec_02} that the radial Hardy space $H^2_{\mathrm{rad}}(\mathcal{P})$ is given by
\[
H^2_{\mathrm{rad}}(\mathcal{P})
=
\left\{
f\in\operatorname{Hol}(\mathcal{P}):
\|f\|_{H^2_{\mathrm{rad}}(\mathcal{P})}=
\sup_{0<r<1}
\left(\int_{b\mathcal{P}}
|f\circ\delta_r|^2\,d\mu\right)^{1\slash 2}
<\infty
\right\},
\]
where $\delta_r(a, s, p)=(ra, rs, r^2p)$ for $0<r<1$. We have proved in Section \ref{sec_02} that the space $H^2(\mathcal{P})$ is isometrically isomorphic to $H^2_{\mathrm{rad}}(\mathcal P)$. Consequently, the polynomial algebra is dense in $H^2_{\mathrm{rad}}(\mathcal P)$. Also, Corollary \ref{lem_rad} states that $\|f-f\circ \delta_r\|_{H^2(\mathcal P)} \to 0$ as $r \uparrow 1$ for every $f \in H^2(\mathcal P)$.  We present alternative proofs to these facts using only the radial Hardy space norm.

\begin{lemma}[Radial approximation]\label{lem_radII}
Let $f \in H_{\mathrm{rad}}^2(\mathcal{P})$ and define $f_r=f\circ \delta_r$ for $r \in (0, 1)$. Then $\lim_{r\to 1}\|f_r-f\|_{H_{\mathrm{rad}}^2(\mathcal{P})} = 0$.
\end{lemma}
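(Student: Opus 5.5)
The plan is to work only with the radial norm, without going through the boundary space $H^2(\mathcal P)$. For $0<\rho<1$ and $g\in\operatorname{Hol}(\mathcal P)$, set
\[
M(g,\rho)=\int_{b\mathcal P}|g\circ\delta_\rho|^2\,d\mu .
\]
First I would show that $\rho\mapsto M(g,\rho)$ is nondecreasing. Fix $z\in b\mathcal P$. The function $\lambda\mapsto g(\lambda\rho z_1,\lambda\rho z_2,\lambda^2\rho^2 z_3)$ is holomorphic on a neighbourhood of $\overline{\mathbb D}$, because $\delta_\rho(\overline{\mathcal P})\Subset\mathcal P$ and $\mathcal P$ is $(1,1,2)$-quasi-balanced. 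Averaging over $\omega\in\mathbb T$, using the invariance $(q_\omega)_*\mu=\mu$, and applying Fubini gives
\[
M(g,t\rho)=\int_{b\mathcal P}\int_{\mathbb T}\bigl|g(\delta_{t\rho}q_\omega z)\bigr|^2\,dm_1(\omega)\,d\mu(z).
\]
The inner integral is an integral mean of a holomorphic function of $t\omega$, so it is nondecreasing in $t\in(0,1]$. Hence $M(g,\cdot)$ is nondecreasing, and $\|g\|^2_{H^2_{\mathrm{rad}}}=\lim_{\rho\uparrow1}M(g,\rho)$.

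Next, apply this to $g=f_r-f$. Since $(f_r-f)\circ\delta_\rho=f_{r\rho}-f_\rho$, we get
\[
\|f_r-f\|^2_{H^2_{\mathrm{rad}}}=\lim_{\rho\uparrow1}\|f_{r\rho}-f_\rho\|^2_{L^2(\mu)} .
\]
To estimate the right-hand side, expand $\|f_{r\rho}-f_\rho\|^2$ as $M(f,r\rho)+M(f,\rho)-2\operatorname{Re}\langle f_{r\rho},f_\rho\rangle$. The cross term should be handled by quasi-circular averaging as well. For fixed $z$, write the one-variable Taylor expansion $\lambda\mapsto f(\delta_\lambda z)=\sum_n c_n(z)\lambda^n$; this is one-variable Hardy theory and does not use the identification with $H^2(\mathcal P)$. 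The circle average then gives
\[
\langle f_{r\rho},f_\rho\rangle=\int_{b\mathcal P}\sum_n r^n\rho^{2n}|c_n(z)|^2\,d\mu(z),
\]
which is real. By the same computation, $M(f,\rho)=\int\sum_n\rho^{2n}|c_n|^2\,d\mu$. Therefore
\[
\|f_{r\rho}-f_\rho\|^2=\int_{b\mathcal P}\sum_n(1-r^n)^2\rho^{2n}|c_n(z)|^2\,d\mu(z).
\]
Letting $\rho\uparrow1$ by monotone convergence, this becomes $\int\sum_n(1-r^n)^2|c_n(z)|^2\,d\mu$. This quantity is finite, because $\int\sum_n|c_n|^2\,d\mu=\|f\|^2_{H^2_{\mathrm{rad}}}<\infty$, again by monotone convergence.

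Finally, let $r\uparrow1$. Dominated convergence applies, with dominating function $\sum_n|c_n(z)|^2\in L^1(\mu)$ and pointwise convergence $(1-r^n)^2\to0$, so $\|f_r-f\|_{H^2_{\mathrm{rad}}}\to0$.

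I expect the main obstacle to be the measurability and justification of the Fubini interchanges. This includes checking that $z\mapsto c_n(z)$ is measurable; it is in fact continuous on $b\mathcal P$, by the Cauchy integral over $|\lambda|=\rho'<1$ and compactness. It also includes making sure the series identities hold pointwise before integrating. Both are routine given the absolute convergence of the Taylor series on $|\lambda|<1$.
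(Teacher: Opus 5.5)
Your proof is correct, but it follows a different route from the paper's.

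\textbf{The paper's route.} The paper pulls back to the tridisc. With $F=f\circ\Psi$, the identities $\delta_\rho\circ\Psi(\xi,\lambda_1,\lambda_2)=\Psi(\xi,\rho\lambda_1,\rho\lambda_2)$ and $\mu=\Psi_*m_3$ turn the radial norm of $f_r-f$ into the mixed Hardy norm $\sup_\rho\int_{\mathbb T^3}|(F_r-F)(\xi,\rho\lambda_1,\rho\lambda_2)|^2\,dm_3$. Parseval then evaluates this as $\sum|c_{m,n,k}|^2|r^{n+k}-1|^2$, in terms of the three-variable Taylor coefficients of $F$. This step needs the geometric fact $\Psi(\overline{\mathbb D}\times\mathbb D^2)\subseteq\mathcal P$, so that $F$ has continuous boundary values in $\xi$. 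As a by-product it gives an isometric embedding of $H^2_{\mathrm{rad}}(\mathcal P)$ into $H^2(\mathbb D^3)$.

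\textbf{Your route.} You never use $\Psi$ beyond the $q_\omega$-invariance of $\mu$. You stay on $b\mathcal P$ and apply one-variable Hardy theory along the orbits $\lambda\mapsto\delta_\lambda z$. Your coefficients $c_n(z)$ are exactly the weighted homogeneous components $q_n(z)$ of $f$. This also disposes of the measurability issue, since they are polynomials. In effect, your computation is a pointwise, Hilbert-space-free version of the Parseval argument behind Theorem~\ref{thm:radial-realization} and Corollary~\ref{lem_rad}.

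\textbf{What each buys.} Your argument uses nothing about $\mu$ except its $q_\omega$-invariance, and nothing about $\mathcal P$ except $\delta_\rho(\overline{\mathcal P})\subseteq\mathcal P$ for $0<\rho<1$. It also proves the monotonicity of $\rho\mapsto\int_{b\mathcal P}|g\circ\delta_\rho|^2\,d\mu$ directly. Consequently it applies essentially unchanged to other quasi-circular-invariant boundary measures, for instance $\mu_J$ and $\mu_{J^2}$. The paper's method would instead require a weighted Parseval identity on $\mathbb T^3$ for those. The paper's approach is shorter once the tridisc Parseval identity and the inclusion $\Psi(\overline{\mathbb D}\times\mathbb D^2)\subseteq\mathcal P$ are in hand, and it ties the radial space directly to the classical $H^2(\mathbb D^3)$.
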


\begin{proof}
Consider the parametrization map $\Psi$ and the map $\Psi_r$ given by
\[
\Psi(\xi, \lambda_1, \lambda_2)=\left(
\frac{\xi(\lambda_1-\lambda_2)}{2},
\lambda_1+\lambda_2,
\lambda_1 \lambda_2
\right) \quad \text{and} \quad \Psi_r(\xi, \lambda_1, \lambda_2)=\Psi(\xi, r\lambda_1, r\lambda_2)
\]
for $0<r<1$. Define $F: \overline{\mathbb D} \times \mathbb D^2 \to \mathcal P$ as $F=f \circ \Psi$. Since $\Psi(\overline{\mathbb D} \times \mathbb{D}^2)\subseteq\mathcal{P}$, the map $F$ is
holomorphic on $\mathbb{D}^3$ and continuous on $\overline{\mathbb D} \times \mathbb D^2$. For $0<r<1$, define
$F_r(\xi,\lambda_1, \lambda_2)=F(\xi,r \lambda_1,r \lambda_2)$. Then
\[
\begin{aligned}
F_r(\xi,\lambda_1,\lambda_2)
=
f\left(
\frac{r\xi(\lambda_1-\lambda_2)}{2},
r(\lambda_1+\lambda_2),
r^2\lambda_1\lambda_2
\right)
=f\circ\Psi_r(\xi,\lambda_1,\lambda_2).
\end{aligned}
\]
Following the proof for Proposition \ref{prop:isometric-embedding} for $\|.\|_{H^2_{\mathrm{rad}}(\mathcal P)}$, one can show that $
\|F\|_{H^2(\mathbb D^3)}=\|f\|_{H^2_{\mathrm{rad}}(\mathcal{P})}$ and so, $F\in H^2(\mathbb{D}^3)$. Let us write the
Taylor expansion of $F$ on $\mathbb D^3$ as
\[
F(\xi, \lambda_1, \lambda_2)
=
\sum_{m,n,k\geq0}
c_{m,n,k}\xi^m \lambda_1^n \lambda_2^k.
\]
Then 
$
\|f_r-f\|_{H^2_{\mathrm{rad}}(\mathcal{P})}^2=\|F_r-F\|_{H^2(\mathbb{D}^3)}^2
=
\sum_{m,n,k\geq0}
|c_{m,n,k}|^2|r^{n+k}-1|^2 \to 0
$
as $r \to 1$. Here, the convergence to $0$ follows from the dominated convergence theorem since $0<r<1$ and $\|F\|^2_{H^2(\mathbb D^3)}=$$\sum_{m,n,k\geq0}|c_{m,n,k}|^2<\infty$.
\end{proof}

\begin{theorem}
The polynomial algebra is dense in $H_{\mathrm{rad}}^2(\mathcal P)$.
\end{theorem}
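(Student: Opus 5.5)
The argument combines Lemma \ref{lem_radII} with polynomial approximation of each dilate $f_r$, working only with the radial norm $\|\cdot\|_{H^2_{\mathrm{rad}}(\mathcal P)}$. Fix $f\in H^2_{\mathrm{rad}}(\mathcal P)$ and $\varepsilon>0$. By Lemma \ref{lem_radII}, I choose $r\in(0,1)$ with $\|f_r-f\|_{H^2_{\mathrm{rad}}(\mathcal P)}<\varepsilon/2$. It then suffices to approximate the single function $f_r=f\circ\delta_r$ by polynomials in the radial norm.

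For the approximation of $f_r$, I use that $f_r$ is holomorphic on a neighbourhood of $\overline{\mathcal P}$. Indeed, pick $r<R<1$ with $r/R$ close to $1$. Then $f_r=(f\circ\delta_R)\circ\delta_{r/R}$, and $\delta_{r/R}$ maps a neighbourhood of $\overline{\mathcal P}$ into $\mathcal P$, because $\delta_t(\overline{\mathcal P})\Subset\mathcal P$ for $t<1$. Since $\overline{\mathcal P}$ is polynomially convex \cite{AglerLykovaYoung2015}, the Oka--Weil theorem gives a polynomial $q$ with $\sup_{\overline{\mathcal P}}|f_r-q|<\varepsilon/2$.

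To pass from the sup norm to the radial norm, I need $\|g\|_{H^2_{\mathrm{rad}}(\mathcal P)}\le\sup_{\mathcal P}|g|$ for bounded holomorphic $g$. This holds because for each $\rho\in(0,1)$ we have $\delta_\rho(b\mathcal P)\subseteq\overline{\mathcal P}$, so $\int_{b\mathcal P}|g\circ\delta_\rho|^2\,d\mu\le\sup|g|^2$, as $\mu$ is a probability measure. Applying this to $g=f_r-q$ gives $\|f-q\|_{H^2_{\mathrm{rad}}(\mathcal P)}<\varepsilon$, which proves density.

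The step that needs care is justifying Oka--Weil. The route above relies on $\delta_t$ for $t<1$ being defined on an open neighbourhood $U$ of $\overline{\mathcal P}$ with $\delta_t(U)\subseteq\mathcal P$. This follows from compactness of $\overline{\mathcal P}$ and openness of $\mathcal P$.

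If one prefers to avoid Oka--Weil entirely, there is an alternative that stays within the radial framework. Take the weighted homogeneous expansion $f=\sum_n q_n$, with $q_n$ given by the Cauchy integral as in Theorem \ref{thm:radial-realization}. The converse half of that proof shows $\sum\|q_n\|^2<\infty$ and that the radial norm of $f-\sum_{n\le N}q_n$ equals $\sum_{n>N}\|q_n\|^2$; this uses only the quasi-circular invariance of $\mu$ and Parseval. That tail tends to $0$, so the partial sums are polynomial approximants directly. I would present the Oka--Weil argument and mention this one as a check.
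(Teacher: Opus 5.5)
Your proposal is correct and follows the paper's own argument. You approximate $f$ by a dilate $f\circ\delta_r$ via Lemma~\ref{lem_radII}, approximate that dilate uniformly on the polynomially convex set $\overline{\mathcal P}$ by polynomials using Oka--Weil, and dominate the radial norm by the supremum norm over $\overline{\mathcal P}$. Two small remarks: the factorization through $\delta_R$ is unnecessary, since $\delta_r^{-1}(\mathcal P)$ is already an open neighbourhood of $\overline{\mathcal P}$ on which $f\circ\delta_r$ is holomorphic, and your alternative via the weighted homogeneous expansion is also sound, provided you note that the tail $\sum_{n>N}\|q_n\|^2$ is the \emph{square} of the radial norm of $f-\sum_{n\le N}q_n$.
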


\begin{proof}
Evidently, $\overline{\mathbb{C}[a,s,p]}^{\,\|\cdot\|_{H_{\mathrm{rad}}^2(\mathcal{P})}}
\subseteq H_{\mathrm{rad}}^2(\mathcal{P})$. Conversely, let $f\in H_{\mathrm{rad}}^2(\mathcal{P})$ and define $f_n=f\circ \delta_{\alpha_n}$, where $\alpha_n=1-1/n$ for $n \geq 1$. Since $\mathcal{P}$ is $(1,1,2)$-quasi-balanced, each $f_n$ is holomorphic on a neighbourhood of $\overline{\mathcal{P}}$. As $\overline{\mathcal{P}}$ is polynomially convex, the
Oka-Weil gurantees the existence of a sequence of polynomials
$\{p_{n,k}\}_{k=1}^{\infty}$ such that $\|p_{n,k}- f_n\|_{\infty, \overline{\mathcal P}} \to 0$ as $k \to 0$. Then for every $n, k \geq 1$, 
\[
\|p_{n,k}-f_n\|_{H_{\mathrm{rad}}^2(\mathcal{P})}^2
=
\sup_{0<r<1}
\int_{\mathbb{T}^3}
|(p_{n,k}-f_n)\circ\Psi_r|^2\,dm
\leq
\|p_{n,k}-f_n\|_{\infty, \overline{\mathcal{P}}}^2
\]
and so, $p_{n, k} \to f_n$ in $H^2_{\mathrm{rad}}(\mathcal P)$
Thus, $f_n\in
\overline{\mathbb{C}[a,s,p]}^{\,\|\cdot\|_{H_{\mathrm{rad}}
^2(\mathcal{P})}}$ for every $n \geq 1$. By Lemma \ref{lem_radII}, $\|f_n-f\|_{H^2_{\mathrm{rad}}(\mathcal P)} \to 0$ as $n \to \infty$. Since each $f_n \in \overline{\mathbb{C}[a,s,p]}^{\,\|\cdot\|_{H^2_{\mathrm{rad}}(\mathcal{P})}}$, it follows that
$f$ belongs to the same space. Hence $H^2_{\mathrm{rad}}(\mathcal{P})
=
\overline{\mathbb{C}[a,s,p]}^{\,\|\cdot\|_{H^2_{\mathrm{rad}}(\mathcal{P})}}$.
\end{proof}

\noindent \textbf{Acknowledgments.} 
Guicong Su was partially supported by the National Natural Science Foundation of China (Nos.\ 1231101308, \ 12671102) and the Natural Science Foundation of Shandong Province (No.\ ZR2025MS77).
Nitin Tomar was  supported via the Postdoctoral Fellowship (Ref. No. 0204/9(10)/2026-R\&D-II/4805) awarded by the National Board for Higher Mathematics (NBHM), Government of India.

\bibliographystyle{plain}   
\bibliography{references} 
\end{document}